\setlist{topsep=0ex}
\def\namedlabel#1#2{\begingroup
    #2%
    \def\@currentlabel{#2}%
    \phantomsection\label{#1}\endgroup
}
    \newcommand{\thzfc}{\mathsf{ZFC}}
    \newcommand{\Bwf}{\mathcal{B}}
    \newcommand{\Ewf}{\mathcal{E}}
    \newcommand{\Hwf}{\mathcal{H}}
    \newcommand{\Iwf}{\mathcal{I}}
    \newcommand{\Jwf}{\mathcal{J}}
    \newcommand{\Mwf}{\mathcal{M}}
    \newcommand{\Nwf}{\mathcal{N}}
    \newcommand{\NAwf}{\mathcal{N\!A}}
    \newcommand{\Pwf}{\mathcal{P}}
    \newcommand{\Scal}{\mathcal{S}}
    \newcommand{\Swf}{\mathcal{S}}
    \newcommand{\bfrak}{\mathfrak{b}}
    \newcommand{\cfrak}{\mathfrak{c}}
    \newcommand{\dfrak}{\mathfrak{d}}
    \newcommand{\Cbf}{\mathbf{C}}
    \newcommand{\Mbf}{\mathsf{M}}
    \newcommand{\Bor}{\mathbb{B}}
    \newcommand{\Cor}{\mathbb{C}}
    \newcommand{\Dor}{\mathbb{D}}
    \newcommand{\Eor}{\mathbb{E}}
    \newcommand{\Ior}{\mathbb{I}}
    \newcommand{\Por}{\mathbb{P}}
    \newcommand{\Qor}{\mathbb{Q}}
    \newcommand{\Qnm}{\dot{\mathbb{Q}}}
    \newcommand{\LOCor}{\mathds{LOC}}
    \newcommand{\menos}{\smallsetminus}
    \DeclareMathOperator{\pts}{\mathcal{P}}
    \newcommand{\Q}{\mathbb{Q}}
    \newcommand{\R}{\mathbb{R}}
    \newcommand{\la}{\langle}
    \newcommand{\ra}{\rangle}
    \DeclareMathOperator{\cf}{\mbox{\rm cf}}
    \newcommand{\frestr}{{\upharpoonright}}
    \DeclareMathOperator{\add}{\mathrm{add}}
    \DeclareMathOperator{\non}{\mbox{\rm non}}
    \DeclareMathOperator{\cov}{\mbox{\rm cov}}
    \DeclareMathOperator{\cof}{\mbox{\rm cof}}
    \newcommand{\leqT}{\mathrel{\mbox{$\preceq_{\mathrm{T}}$}}}
    \newcommand{\eqT}{\mathrel{\mbox{$\cong_{\mathrm{T}}$}}}
    \DeclareMathOperator{\Seq}{\mathrm{seq}}
    \DeclareMathOperator{\Seqw}{\mathrm{seq}_{<\omega}}
    \newcommand{\Ed}{\mathsf{Ed}}
    \newcommand{\seq}[2]{\la #1 :\, #2\ra}
    \newcommand{\set}[2]{\{#1 :\, #2\}}
    \newcommand{\Lc}{\mathsf{Lc}}
    \newcommand{\Cn}{\mathsf{Cn}}
    \newcommand{\blc}{\mathfrak{b}^{\mathsf{Lc}}}
    \newcommand{\balc}{\mathfrak{b}^{\mathsf{aLc}}}
    \newcommand{\dalc}{\mathfrak{d}^{\mathsf{aLc}}}
    \newcommand{\Fr}{\mathsf{Fr}}
    \newcommand{\Lb}{\mathsf{Lb}}
    \newcommand{\gen}{\mathrm{gn}}
    \newcommand{\id}{\mathrm{id}}
\newcommand{\minLc}{\mathrm{minLc}}
\newcommand{\baire}{\omega^\omega}
\newcommand{\cantor}{2^\omega}
\newcommand{\largeset}[2]{\bigg\{#1:\; #2\bigg\}}
\newcommand{\sqrb}{\sqsubset^{\bullet}}
\newcommand{\eq}{\mathsf{eq}}
\newcommand{\Crm}{\mathsf{Cv}}
    \newcommand{\lset}[2]{\left\{#1 \colon  #2\right\}}
\newcommand{\MAwf}{\mathcal{MA}}
\newcommand{\IAwf}{\mathcal{IA}}
    \definecolor{carrotorange}{rgb}{0.93, 0.57, 0.13}
    \definecolor{dodger}{rgb}{0.0,0.5,1.0}
    \newcommand{\Rrm}{\mathsf{R}}
\newcommand{\sqsubm}{\sqsubset^{\rm m}}
\definecolor{yaleblue}{rgb}{0.06, 0.3, 0.57}
\definecolor{zinnwalditebrown}{rgb}{0.17, 0.09, 0.03}
\definecolor{goldenpoppy}{rgb}{0.99, 0.76, 0.0}
\definecolor{amber}{rgb}{1.0, 0.75, 0.0}
\definecolor{amber(sae/ece)}{rgb}{1.0, 0.49, 0.0}
\definecolor{aquamarine}{rgb}{0.5, 1.0, 0.83}
\definecolor{craneorange}{RGB}{252,187,6}
\definecolor{apricot}{rgb}{0.98, 0.81, 0.69}
\definecolor{bubblegum}{rgb}{0.99, 0.76, 0.8}
\definecolor{babyblue}{rgb}{0.54, 0.81, 0.94}
\definecolor{babyblueeyes}{rgb}{0.63, 0.79, 0.95}
\definecolor{bronze}{rgb}{0.8, 0.5, 0.2}
\definecolor{brown(traditional)}{rgb}{0.59, 0.29, 0.0}
\definecolor{burgundy}{rgb}{0.5, 0.0, 0.13}
\definecolor{ao(english)}{rgb}{0.0, 0.5, 0.0}
\definecolor{asparagus}{rgb}{0.53, 0.66, 0.42}
\definecolor{celadon}{rgb}{0.67, 0.88, 0.69}
\definecolor{cadmiumgreen}{rgb}{0.0, 0.42, 0.24}
\definecolor{dartmouthgreen}{rgb}{0.05, 0.5, 0.06}
\definecolor{darkcandyapplered}{rgb}{0.64, 0.0, 0.0}
\definecolor{darkgreen}{rgb}{0.0, 0.2, 0.13}
\definecolor{ferngreen}{rgb}{0.31, 0.47, 0.26}
\definecolor{forestgreen(web)}{rgb}{0.13, 0.55, 0.13}
\definecolor{green(html/cssgreen)}{rgb}{0.0, 0.5, 0.0}
\definecolor{green(pigment)}{rgb}{0.0, 0.65, 0.31}
\definecolor{green(ncs)}{rgb}{0.0, 0.62, 0.42}
\definecolor{officegreen}{rgb}{0.0, 0.5, 0.0}
\definecolor{darkspringgreen}{rgb}{0.09, 0.45, 0.27}
\definecolor{lavenderblush}{rgb}{1.0, 0.94, 0.96}
\definecolor{goldenbrown}{rgb}{0.6, 0.4, 0.08}
\definecolor{dodger}{rgb}{0.0,0.5,1.0}
\newcommand{\dodger}[1]{{\color{dodger}#1}}
\definecolor{sub0}{RGB}{29,32,137}
\definecolor{sub1}{RGB}{1,71,157}
\definecolor{sub2}{RGB}{1,104,183}
\definecolor{sub3}{RGB}{0,160,234}
\definecolor{sug}{RGB}{0,154,68}
\definecolor{suy}{RGB}{208,219,1}
\title{Cardinal invariants associated with the combinatorics of the uniformity number of the ideal of meager-additive sets}
\author{Miguel A.~Cardona%
\thanks{Email: \href{miguel.cardona@mail.huji.ac.il}{\texttt{miguel.cardona@mail.huji.ac.il}}\newline
}
}
\date{{\normalsize
Edmond J. Safra Campus, Givat Ram\\
The Hebrew University of Jerusalem\\
Jerusalem, 91904, Israel\\
\href{mailto:miguel.cardona@mail.huji.ac.il}{\texttt{miguel.cardona@mail.huji.ac.il}}\medskip
}
}
\begin{document}

\makeatletter
\def\@roman#1{\romannumeral #1}
\makeatother

\newcounter{enuAlph}
\renewcommand{\theenuAlph}{\Alph{enuAlph}}

\numberwithin{equation}{section}
\renewcommand{\theequation}{\thesection.\arabic{equation}}

\theoremstyle{plain}
  \newtheorem{theorem}[equation]{Theorem}
  \newtheorem{corollary}[equation]{Corollary}
  \newtheorem{lemma}[equation]{Lemma}
  \newtheorem{mainlemma}[equation]{Main Lemma}
  \newtheorem{prop}[equation]{Proposition}
  \newtheorem{clm}[equation]{Claim}
  \newtheorem{fct}[equation]{Fact}
  \newtheorem{question}[equation]{Question}
  \newtheorem{problem}[equation]{Problem}
  \newtheorem{conjecture}[equation]{Conjecture}
  \newtheorem*{thm}{Theorem}
  \newtheorem{teorema}[enuAlph]{Theorem}
  \newtheorem*{corolario}{Corollary}
  \newtheorem*{scnmsc}{(SCNMSC)}
\theoremstyle{definition}
  \newtheorem{definition}[equation]{Definition}
  \newtheorem{example}[equation]{Example}
  \newtheorem{remark}[equation]{Remark}
  \newtheorem{notation}[equation]{Notation}
  \newtheorem{context}[equation]{Context}
  \newtheorem{exer}[equation]{Exercise}
  \newtheorem{exerstar}[equation]{Exercise*}

  \newtheorem*{defi}{Definition}
  \newtheorem*{acknowledgements}{Acknowledgements}
  
\def\sectionautorefname{Section}
\def\subsectionautorefname{Subsection}

\maketitle

\begin{abstract}
In~\cite{CMR2}, it was proved that it is relatively consistent that \emph{bounding number} $\bfrak$ is smaller than the uniformity of $\MAwf$, where $\MAwf$ denotes the ideal of the meager-additive sets of $\cantor$. 
To establish this result, a specific cardinal invariant, which we refer to as $\mathfrak{b}_b^\mathsf{eq}$, was introduced in close relation to Bartoszy\'nski's and Judah's characterization of the uniformity of $\MAwf$. This survey aims to explore this cardinal invariant along with its dual, which we call as $\mathfrak{d}_b^\mathsf{eq}$. In particular, we will illustrate its connections with the cardinals represented in Cicho\'n's diagram. Furthermore, we will present several open problems pertaining to these cardinals.
\end{abstract}

\section{Introduction and preliminaries}\label{sec:intro}

We first review our terminology. Given a formula $\phi$, $\forall^\infty\, n<\omega\colon \phi$ means that all but finitely many natural numbers satisfy $\phi$; $\exists^\infty\, n<\omega\colon \phi$ means that infinitely many natural numbers satisfy $\phi$. Let $\Iwf$ be an ideal of subsets of $X$ such that $\{x\}\in \Iwf$ for all $x\in X$. Throughout this paper, we demand that all ideals satisfy this latter requirement. We introduce the following four \emph{cardinal invariants associated with $\Iwf$}: 
\begin{align*}
 \add(\Iwf)&=\min\largeset{|\Jwf|}{\Jwf\subseteq\Iwf,\,\bigcup\Jwf\notin\Iwf},\\
 \cov(\Iwf)&=\min\largeset{|\Jwf|}{\Jwf\subseteq\Iwf,\,\bigcup\Jwf=X},\\
 \non(\Iwf)&=\min\set{|A|}{A\subseteq X,\,A\notin\Iwf},\textrm{\ and}\\
 \cof(\Iwf)&=\min\set{|\Jwf|}{\Jwf\subseteq\Iwf,\ \forall\, A\in\Iwf\ \exists\, B\in \Jwf\colon A\subseteq B}.
\end{align*}
These cardinals are referred to as the \emph{additivity, covering, uniformity} and \emph{cofinality of $\Iwf$}, respectively. The relationship between the cardinals defined above is illustrated in \autoref{diag:idealI}. 

\begin{figure}[ht!]
\centering
\begin{tikzpicture}[scale=1.5]
\small{
\node (azero) at (-1,1) {$\aleph_0$};
\node (addI) at (1,1) {$\add(\Iwf)$};
\node (covI) at (2,2) {$\cov(\Iwf)$};
\node (nonI) at (2,0) {$\non(\Iwf)$};
\node (cofI) at (4,2) {$\cof(\Iwf)$};
\node (sizX) at (4,0) {$|X|$};
\node (sizI) at (5,1) {$|\Iwf|$};

\draw (azero) edge[->] (addI);
\draw (addI) edge[->] (covI);
\draw (addI) edge[->] (nonI);
\draw (covI) edge[->] (sizX);
\draw (nonI) edge[->] (sizX);
\draw (covI) edge[->] (cofI);
\draw (nonI) edge[->] (cofI);
\draw (sizX) edge[->] (sizI);
\draw (cofI) edge[->] (sizI);
}
\end{tikzpicture}
\caption{Diagram of the cardinal invariants associated with $\Iwf$. An arrow  $\mathfrak x\rightarrow\mathfrak y$ means that (provably in ZFC) 
    $\mathfrak x\le\mathfrak y$.}
\label{diag:idealI}
\end{figure}
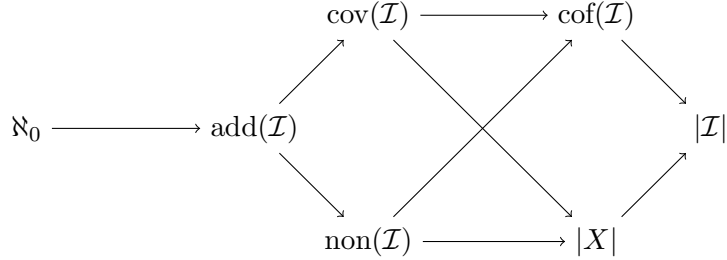

For $f,g\in\omega^\omega$ define
\[f\leq^*g\textrm{\ iff\ }\forall^\infty n\colon f(n)\leq g(n).\]
Let
\[\bfrak:=\min\{|F|:F\subseteq \omega^\omega\text{\ and }\neg\exists y\in \omega^\omega\, \forall x\in F\colon x\leq^* y\}\]
the \emph{bounding number}, and let
\[\dfrak:=\min\{|D|:D\subseteq \omega^\omega\text{\ and }\forall x\in \omega^\omega\, \exists y\in D\colon x\leq^* y\}\]
the \emph{dominating number}. As usual, $\cfrak:=2^\omega$ denotes the \emph{size of the continuum}.

\begin{definition}\label{def:a11}
Let $\Iwf\subseteq\Pwf(\cantor)$ be an ideal. 
 \begin{enumerate}[label=\rm(\arabic*)]
     \item We say that $\Iwf$ is 
\emph{translation invariant}\footnote{This paper considers the Cantor space $\cantor$ as a topological group with the standard modulo $2$ coordinatewise addition.} if $A+x\in\Iwf$ for each $A\in\Iwf$ and $x\in\cantor$.

     \item A set $X\subseteq\cantor$ is termed \emph{$\Iwf$-additive} if, for every $A\in\Iwf$, $A+X\in\Iwf$. Denote by $\IAwf$ the collection of the $\Iwf$-additive subsets of $\cantor$. Notice that $\IAwf$ is a ($\sigma$-)ideal and $\IAwf\subseteq\Iwf$ when $\Iwf$ is a translation invariant ($\sigma$-)ideal. 
 \end{enumerate}   
\end{definition}

When $\Iwf$ is either $\Mwf$ or $\Nwf$, the ideal $\IAwf$ has attracted a lot of attention. Bartoszy\'nski and Judah~\cite{bartJudah},
Pawlikowski~\cite{paw85}, Shelah~\cite{shmn},~Zindulka~\cite{zin19} and the author, Mej\'ia, and Rivera-Madrid~\cite{CMR2}, for example, were among the many who looked into them. 

Denote by $\Ior$ the set of partitions of $\omega$ into finite non-empty intervals. 

\begin{theorem}[{\cite[Thm.~13]{shmn}}]\label{thm:a0}
 Let $X\subseteq\cantor$.  Then $X\in\NAwf$ iff for all $I=\seq{I_n}{n\in\omega}\in\Ior$ there is some $\varphi\in\prod_{n\in \omega}\pts(2^{I_n})$ such that $\forall n\in \omega\colon |\varphi(n)|\leq n$ and $X\subseteq H_\varphi$,  where \[
H_\varphi:=\set{x\in\cantor}{\forall^{\infty} n\in \omega\colon x{\upharpoonright}I_n\in \varphi(n)}.
\]
\end{theorem}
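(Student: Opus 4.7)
My plan is to reduce both directions to the classical Bartoszy\'nski--Judah characterization of null sets in $\cantor$: a set $N\subseteq\cantor$ is null iff there exist a partition $\langle J_n:n\in\omega\rangle\in\Ior$ and sets $F_n\subseteq 2^{J_n}$ with $\sum_n |F_n|/2^{|J_n|}<\infty$ such that $N\subseteq\{x:\exists^\infty\, n\colon x\upharpoonright J_n\in F_n\}$.

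For the $(\Leftarrow)$ direction, suppose the slalom condition holds for $X$, and let $N$ be null, written as above. I would first coarsen the partition: pick $0=m_0<m_1<\cdots$ so that $\sum_{n\in[m_k,m_{k+1})}|F_n|/2^{|J_n|}\leq 4^{-k}/k$ (possible because the total sum converges), and set $I'_k:=\bigcup_{n\in[m_k,m_{k+1})}J_n$ together with $F'_k:=\{s\in 2^{I'_k}:\exists\, n\in[m_k,m_{k+1}),\ s\upharpoonright J_n\in F_n\}$. A direct count shows $|F'_k|/2^{|I'_k|}\leq 4^{-k}/k$ and $N\subseteq\{x:\exists^\infty\, k\colon x\upharpoonright I'_k\in F'_k\}$. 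Applying the hypothesis to $\langle I'_k\rangle$ produces $\varphi$ with $|\varphi(k)|\leq k$ and $X\subseteq H_\varphi$. For $x\in X$ and $y\in N$ one has $(x+y)\upharpoonright I'_k\in \varphi(k)+F'_k$ for infinitely many $k$, while the estimate $|\varphi(k)+F'_k|/2^{|I'_k|}\leq k\cdot 4^{-k}/k=4^{-k}$ gives $\sum_k 4^{-k}<\infty$, whence $X+N$ is null by Borel--Cantelli.

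For the $(\Rightarrow)$ direction, assume $X\in\NAwf$ and fix $I=\langle I_n:n\in\omega\rangle\in\Ior$. The plan is to build a null set $N_I$ tailored to $I$ whose translation by $X$ forces the desired slalom into existence. Concretely, one bundles the $I_n$'s into fast-growing blocks $K_k=\bigcup_{n\in[n_k,n_{k+1})}I_n$ and chooses subsets $B_n\subseteq 2^{I_n}$ with $\prod_n |B_n|/2^{|I_n|}=0$, so that $N_I:=\{y:\forall\, n,\ y\upharpoonright I_n\in B_n\}$ is null. Null-additivity then yields that $X+N_I$ is null, and by the characterization of null sets it admits a cover of the form $\{z:\exists^\infty\, m\colon z\upharpoonright L_m\in G_m\}$ with $\sum_m|G_m|/2^{|L_m|}<\infty$. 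From this cover, combined with the known structure of $N_I$, one reads off $\varphi(n)\subseteq 2^{I_n}$ satisfying $|\varphi(n)|\leq n$ on a cofinal set of indices, with $X\subseteq H_\varphi$.

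The main obstacle is this $(\Rightarrow)$ direction, specifically matching the bound $|\varphi(n)|\leq n$ to the \emph{given} partition $I$: the cover of $X+N_I$ is naturally indexed by an auxiliary partition $L$ that need not refine or coarsen $I$, so the slalom has to be projected back to $I$ through a delicate counting that simultaneously controls the measure of $N_I$ and the covering measure of $X+N_I$. This balancing, together with the optimal choice of blocks $K_k$ and sets $B_n$, is the technical heart of Shelah's original argument in \cite{shmn}; the values of $\varphi$ on the remaining finitely many indices are then filled in arbitrarily.
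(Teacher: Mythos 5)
First, note that the paper does not prove this statement at all: it is quoted verbatim as Shelah's Theorem~13 of \cite{shmn}, so there is no in-paper argument to compare against. Judged on its own, your $(\Leftarrow)$ direction is essentially correct and is the standard easy half: covering a null $N$ via the Bartoszy\'nski-style $\exists^\infty$ characterization, coarsening so that the $k$-th block carries mass at most $4^{-k}/k$, applying the hypothesis to the coarsened partition, and estimating $|\varphi(k)+F'_k|\leq k\cdot|F'_k|$ to conclude by Borel--Cantelli. That computation is sound.

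The $(\Rightarrow)$ direction, however, contains a genuine gap rather than a proof. The entire content of that implication is the extraction step: from the single hypothesis that $X+N_I$ is covered by some $\set{z}{\exists^\infty m\colon z{\upharpoonright}L_m\in G_m}$ with $\sum_m|G_m|/2^{|L_m|}<\infty$, one must argue (via compactness of each translate $x+N_I$, a pigeonhole over tails of the auxiliary partition $L$, and a count balancing $|B_n|$ against $|G_m|$) that the family $\set{x{\upharpoonright}I_n}{x\in X}$ is eventually confined to sets of size at most $n$. You name this step but do not carry it out, and you explicitly defer it to Shelah's original argument; as written, the choice of the blocks $K_k$ and of the sets $B_n$ is not specified, so nothing can actually be ``read off.'' Moreover, your formulation of the conclusion is internally inconsistent: you claim to obtain $|\varphi(n)|\leq n$ only ``on a cofinal set of indices'' and then to fill in ``the remaining finitely many indices'' arbitrarily, but the complement of a cofinal set need not be finite, and $X\subseteq H_\varphi$ requires $x{\upharpoonright}I_n\in\varphi(n)$ for \emph{all but finitely many} $n$ while $|\varphi(n)|\leq n$ must hold for \emph{every} $n$. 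Obtaining the linear bound $n$ (rather than some slalom width $h(n)\to\infty$, which is what the covering argument most naturally yields and which is only a posteriori equivalent, cf.\ the spirit of \autoref{minlcCM}) is precisely the part that needs proof.
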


The following lemma is an immediate consequence of~\autoref{def:a11}.

\begin{lemma}[{\cite[Lem.~1.3]{CMR2}}]\label{bas:NA}
For any translation invariant ideal $\Iwf$ on $\cantor$, we have:   
\begin{enumerate}[label=\rm(\arabic*)]
    \item\label{bas:NA1} $\add(\Iwf)\leq\add(\IAwf)$.

    \item\label{bas:NA2} $\non(\IAwf)\leq\non(\Iwf)$.
\end{enumerate}
\end{lemma}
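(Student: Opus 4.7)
The plan is to prove \ref{bas:NA2} first since it follows immediately from the containment $\IAwf \subseteq \Iwf$ noted in \autoref{def:a11}\,(2), and then reduce \ref{bas:NA1} to the additivity of $\Iwf$ by unwinding the definition of $\Iwf$-additivity and using translation invariance.

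For \ref{bas:NA2}, I would pick a witness $A\subseteq\cantor$ for $\non(\Iwf)$, that is $|A|=\non(\Iwf)$ and $A\notin\Iwf$. Since $\IAwf\subseteq\Iwf$ (by the remark in \autoref{def:a11}\,(2), using that $\Iwf$ is a translation invariant ideal), we have $A\notin\IAwf$ as well, hence $\non(\IAwf)\leq|A|=\non(\Iwf)$.

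For \ref{bas:NA1}, I would start with a family $\Jwf\subseteq\IAwf$ witnessing $\add(\IAwf)$, i.e.\ $|\Jwf|=\add(\IAwf)$ and $\bigcup\Jwf\notin\IAwf$. By the definition of $\IAwf$, the failure of $\Iwf$-additivity of $\bigcup\Jwf$ yields some $A\in\Iwf$ such that
\[
A+\bigcup\Jwf \;=\; \bigcup_{X\in\Jwf}(A+X)\;\notin\;\Iwf.
\]
On the other hand, each $X\in\Jwf$ is $\Iwf$-additive, so $A+X\in\Iwf$ for every $X\in\Jwf$. Thus $\{\,A+X:X\in\Jwf\,\}$ is a subfamily of $\Iwf$ of size $\leq|\Jwf|$ whose union is not in $\Iwf$, which gives $\add(\Iwf)\leq|\Jwf|=\add(\IAwf)$.

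I do not expect any real obstacle here: both statements are essentially direct consequences of the definition of $\IAwf$ together with the translation invariance hypothesis (which guarantees $\IAwf\subseteq\Iwf$ and allows us to rewrite $A+\bigcup\Jwf$ as the union of the translates $A+X$). The only point worth being careful about is, in \ref{bas:NA1}, to note explicitly that $A+X\in\Iwf$ follows from $X\in\IAwf$ applied to the specific $A$ provided by the failure of additivity.
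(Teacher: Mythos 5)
Your proof is correct, and it is exactly the definition-unwinding argument the paper has in mind: the paper gives no written proof, stating only that the lemma is an immediate consequence of \autoref{def:a11}, and your two steps (using $\IAwf\subseteq\Iwf$ for the uniformity inequality, and rewriting $A+\bigcup\Jwf$ as $\bigcup_{X\in\Jwf}(A+X)$ for the additivity inequality) are precisely that immediate consequence.
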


The cardinal $\non(\IAwf)$ has been studied in~\cite{paw85,Kra} under the different name \emph{transitive additivity of $\Iwf$}:\footnote{In~\cite{BJ} is denoted by $\add^\star(\Iwf)$.}
\[\add_t^*(\Iwf)=\min\set{|X|}{X\subseteq\cantor\text{ and }\exists A\in\Iwf\colon A+X\notin\Iwf}.\]
It is clear from the definition that $\non(\IAwf)=\add_t^*(\Iwf)$.

Pawlikowski~\cite{paw85} characterized $\add^*_t(\Nwf)$ (i.e.\ $\non(\NAwf)$) employing slaloms.

\begin{definition}\label{def:a0}
Given a sequence of non-empty sets $b = \seq{b(n)}{n\in\omega}$ and $h\colon \omega\to\omega$, define
\begin{align*}
 \prod b &:= \prod_{n\in\omega}b(n),\textrm{\ and} \\
 \Swf(b,h) &:= \prod_{n\in\omega} [b(n)]^{\leq h(n)}.
\end{align*}
For two functions $x\in\prod b$ and $\varphi\in\Swf(b,h)$ write  
\[x\,\in^*\varphi\textrm{\ iff\ }\forall^\infty n\in\omega:x(n)\in \varphi(n).\]
\end{definition}

\begin{theorem}[{\cite[Lem.~2.2]{paw85}, see also~\cite[Thm.~8.3]{CMlocalc}}]
$\non(\NAwf)$ is the size of the minimal bounded family $F\subseteq\baire$  such that 
\[\forall\varphi\in\Swf(b,\id_\omega)\,\exists x\in F\colon x\,\not\in^*\varphi.\]
\end{theorem}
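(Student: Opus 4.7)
The plan is to prove both inequalities by using Shelah's characterization (\autoref{thm:a0}) as a bridge between non-meager-additivity and non-localization by slaloms. Write $\kappa$ for the minimal cardinality of a bounded family $F\subseteq\baire$ satisfying the stated non-localization property.

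For the inequality $\kappa \leq \non(\NAwf)$, I would start with $X \subseteq \cantor$ witnessing $\non(\NAwf)$, so $X \notin \NAwf$. By the contrapositive of \autoref{thm:a0} there is a partition $I = \seq{I_n}{n\in\omega} \in \Ior$ such that, for every $\varphi\in\prod_n \pts(2^{I_n})$ with $|\varphi(n)|\leq n$, we have $X\not\subseteq H_\varphi$. Set $b(n) := 2^{|I_n|}$ and identify $2^{I_n}$ with $b(n)$. The map $x \mapsto \check{x}$, where $\check{x}(n):= x\frestr I_n$, is an injection $X \hookrightarrow \prod b$. I would then check that $\check X := \{\check x : x\in X\}$ is the required bounded family: given any $\psi \in \Swf(b,\id_\omega)$, view $\psi$ as a slalom in $\prod_n \pts(2^{I_n})$; the defining property of $I$ provides some $x\in X$ with $\exists^\infty n\colon x\frestr I_n \notin \psi(n)$, which is precisely $\check x \not\in^* \psi$.

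For the reverse inequality $\non(\NAwf) \leq \kappa$, I would take a bounded family $F\subseteq \prod b$ of size $\kappa$ with the non-localization property. Choose $I\in\Ior$ with $|I_n|$ large enough that $b(n)\leq 2^{|I_n|}$ for all $n$, and fix injections $e_n \colon b(n) \hookrightarrow 2^{I_n}$. Encode each $x\in F$ as $\hat x \in \cantor$ by setting $\hat x \frestr I_n := e_n(x(n))$; since $I$ partitions $\omega$, this defines $\hat x$ completely, and the injectivity of each $e_n$ makes $x \mapsto \hat x$ injective. If the resulting $\hat F$ lay in $\NAwf$, then \autoref{thm:a0} applied to the partition $I$ would supply $\varphi \in \prod_n \pts(2^{I_n})$ with $|\varphi(n)|\leq n$ and $\hat F \subseteq H_\varphi$; pulling back via $\psi(n) := e_n^{-1}[\varphi(n)]$ gives a slalom in $\Swf(b,\id_\omega)$ localizing every member of $F$, contradicting the choice of $F$. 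Hence $\hat F \notin \NAwf$, so $\non(\NAwf) \leq |\hat F| \leq \kappa$.

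The main task is essentially bookkeeping: tracking the slalom-size constraint $|\varphi(n)|\leq n$ across the coding in both directions. Since an injection $b(n) \hookrightarrow 2^{I_n}$ can only shrink slaloms under pullback and push-forward preserves cardinality, this transfer is automatic. The single point requiring care is the quantifier structure in \autoref{thm:a0}: in the first direction we exploit that being outside $\NAwf$ yields a \emph{particular} partition $I$ (which dictates the choice of $b$), while in the second direction we use that being in $\NAwf$ provides a slalom for \emph{every} partition, allowing us to apply it to the partition $I$ tailored from $b$.
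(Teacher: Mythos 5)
Your argument is correct: both directions of the interval/product coding go through, the size constraint $|\varphi(n)|\leq n$ transfers under the injections $e_n$ exactly as you claim, and you use the quantifier structure of \autoref{thm:a0} correctly in each direction (extracting a single bad partition $I$ from $X\notin\NAwf$, versus instantiating the universally quantified partition at the $I$ tailored to $b$). The paper states this result without proof, citing Pawlikowski, but your route through Shelah's characterization \autoref{thm:a0} is precisely the standard argument in the cited literature, so there is nothing to flag.
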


Stated differently, the uniformity of $\NAwf$ can be described using localization cardinals as follows. 

For $b$ and $h$ as in~\autoref{def:a0}, define 
\[\blc_{b,h}:=\min\largeset{|F|}{F\subseteq \prod b \text{ and } \neg\exists \varphi \in \Swf(b,h)\,\forall x \in F\colon x\in^* \varphi},\]
and set $\minLc:=\min\set{\blc_{b,\id_\omega}}{b\in\baire}$. Here, $\id_\omega$ denotes the identity function on $\omega$.    

Hence, we obtain that $\non(\NAwf)=\minLc$. Another characterization of $\minLc$ is the following.

\begin{lemma}[{\cite[Lemma~3.8]{CM}}]\label{minlcCM}
$\minLc=\min\set{\blc_{b,h}}{b\in\baire}$ when $h$ goes to infinity.  
\end{lemma}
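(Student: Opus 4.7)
The plan is to prove both inequalities $\min_{b \in \baire} \blc_{b, h} \leq \minLc$ and $\minLc \leq \min_{b \in \baire} \blc_{b, h}$ via the same coordinate-grouping idea. Without loss of generality I take $h$ to be non-decreasing, after replacing $h(n)$ by $\inf_{m \geq n} h(m)$. Given any $b \in \baire$ together with a partition $\omega = \bigsqcup_k I_k$ into finite intervals, I set $b'(k) := \prod_{n \in I_k} b(n)$ and use the canonical bijection $\prod b' \cong \prod b$ to identify families $F \subseteq \prod b$ with $F' \subseteq \prod b'$ of the same cardinality. Under this identification, for every slalom $\varphi' \in \prod_k \Pwf(b'(k))$ the coordinate projections $\varphi(n) := \pi_n[\varphi'(k)]$ (for $n \in I_k$) define a slalom on $\prod b$ satisfying $|\varphi(n)| \leq |\varphi'(k)|$, and $x' \in^* \varphi'$ implies $x \in^* \varphi$ coordinate-wise.

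For the upper bound $\min_b \blc_{b, h} \leq \minLc$, I would fix $b$ with $\blc_{b, \id_\omega} = \minLc$ and choose the partition so that $\min I_k \geq h(k)$, which is possible because $h(k) \to \infty$. Then for any $\varphi' \in \Swf(b', h)$ the associated $\varphi$ satisfies $|\varphi(n)| \leq h(k) \leq n$ on $I_k$, so $\varphi \in \Swf(b, \id_\omega)$. Transporting a witness $F$ for $\blc_{b, \id_\omega}$ produces $F' \subseteq \prod b'$ of the same cardinality that cannot be localized by any $\varphi' \in \Swf(b', h)$, giving $\blc_{b', h} \leq \minLc$, and then taking the infimum over $b'$ finishes this direction.

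For the reverse inequality $\minLc \leq \min_b \blc_{b, h}$, I would fix an arbitrary $b$ and instead choose the partition so that $h(\min I_k) \geq k$, which is again feasible since $h \to \infty$. Now for any $\varphi' \in \Swf(b', \id_\omega)$ and $n \in I_k$ we have $|\pi_n[\varphi'(k)]| \leq k \leq h(n)$, hence $\varphi \in \Swf(b, h)$. The same transport argument converts a witness for $\blc_{b, h}$ into a family of equal size failing every slalom in $\Swf(b', \id_\omega)$, yielding $\blc_{b', \id_\omega} \leq \blc_{b, h}$ and therefore $\minLc \leq \min_b \blc_{b, h}$.

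The one substantive verification — and where I expect the only mild care to be needed — is the contrapositive of the preservation property: if $x \in F$ satisfies $x(n) \notin \varphi(n) = \pi_n[\varphi'(k_n)]$ for infinitely many $n$, I must show that $x \restriction I_k \notin \varphi'(k)$ for infinitely many $k$. This follows because each $I_k$ is finite, so the infinitely many bad coordinates $n$ spread over infinitely many distinct blocks; within each such block, $x \restriction I_k \in \varphi'(k)$ would force $x(n) \in \pi_n[\varphi'(k)] = \varphi(n)$ for every $n \in I_k$, a contradiction. Everything else reduces to routine bookkeeping to ensure the partition realises the intended growth condition on $h$ cofinitely, absorbing any initial segment where $h$ is still small into $I_0$.
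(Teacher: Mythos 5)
Your argument is correct, and it is essentially the standard proof of this fact: the paper itself only cites \cite[Lemma~3.8]{CM} without reproducing the argument, and the proof there proceeds by exactly this coordinate-grouping device (re-blocking $\omega$ into finite intervals, passing to $b'(k)=\prod_{n\in I_k}b(n)$, and transporting slaloms and witnesses back and forth), with the interval lengths calibrated against $h$ in each direction just as you do. The only point worth noting is the implicit convention that the minima range over those $b$ for which $\blc_{b,h}$ (resp.\ $\blc_{b,\id_\omega}$) is defined, i.e.\ for which unbounded families exist; your construction respects this, so no gap arises.
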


Hence, we can infer:

\begin{corollary}
 $\non(\NAwf)=\min\set{\blc_{b,h}}{b\in\baire}$ when $h$ goes to infinity.
\end{corollary}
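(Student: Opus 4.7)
The plan is to obtain the statement by simply chaining together the two facts that immediately precede it in the excerpt. First I would record Pawlikowski's characterization (together with the subsequent definition of $\minLc$) in the form
\[
\non(\NAwf) = \minLc = \min\set{\blc_{b,\id_\omega}}{b\in\baire}.
\]
This is exactly the content of the theorem attributed to Pawlikowski, after unpacking the definition $\minLc := \min\{\blc_{b,\id_\omega}:b\in\baire\}$ given just before \autoref{minlcCM}.

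Next, since $\id_\omega$ is a particular function going to infinity, \autoref{minlcCM} applies and yields
\[
\minLc = \min\set{\blc_{b,h}}{b\in\baire}
\]
for every $h\in\baire$ that goes to infinity. Combining the two displayed equalities gives the desired conclusion
\[
\non(\NAwf) = \min\set{\blc_{b,h}}{b\in\baire}
\]
whenever $h\to\infty$. Since both steps are quoted results, there is no genuine obstacle; the only substantive content hidden here lives inside \autoref{minlcCM} itself, whose proof presumably exhibits, for any $b'$ and any $h$ with $h(n)\to\infty$, some $b$ such that $\blc_{b,\id_\omega}\le\blc_{b',h}$ (and conversely), which is not required for the corollary but is the actual mathematical core behind it.
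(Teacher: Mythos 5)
Your proposal is correct and matches the paper's own (implicit) argument exactly: the corollary is stated with "Hence, we can infer" precisely because it is the concatenation of $\non(\NAwf)=\minLc$ with \autoref{minlcCM}. Nothing further is needed.
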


Moreover, it recently was proved that 

\begin{lemma}[{\cite[Thm.~A]{CMR2}}]
$\non(\NAwf)=\add(\NAwf)$.    
\end{lemma}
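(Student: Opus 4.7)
The inequality $\add(\NAwf) \le \non(\NAwf)$ is general: any $A \notin \NAwf$ of size $\non(\NAwf)$ is a union of singletons in $\NAwf$, so $\add(\NAwf) \le |A|$. The substantive direction is $\non(\NAwf) \le \add(\NAwf)$, and I would prove it by showing that whenever $\kappa < \non(\NAwf) = \minLc$ and $\{X_\alpha : \alpha < \kappa\} \subseteq \NAwf$, the union $\bigcup_\alpha X_\alpha$ still lies in $\NAwf$. By \autoref{thm:a0}, this reduces to producing, for each interval partition $I = \la I_n : n \in \omega\ra \in \Ior$, a single slalom $\varphi$ with $|\varphi(n)| \le n$ and $\bigcup_\alpha X_\alpha \subseteq H_\varphi$.

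Fix such an $I$, fix a non-decreasing $h : \omega \to \omega$ tending to infinity with $h(k) \ll k$ (say $h(k) = \lfloor \log_2(k+2)\rfloor$), and pick an increasing sequence $\la n_k : k \in \omega\ra$ with $n_k \ge k \cdot h(k)$. Set $J_k := \bigcup_{n_k \le n < n_{k+1}} I_n$; this coarsening of $I$ is what buys room for the combining step. Applying \autoref{thm:a0} to $J$ gives, for each $\alpha$, a slalom $\varphi^J_\alpha \in \prod_k \pts(2^{J_k})$ with $|\varphi^J_\alpha(k)| \le k$ and $X_\alpha \subseteq H_{\varphi^J_\alpha}$. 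Identifying $[2^{J_k}]^{\le k}$ bijectively with the natural number $c(k) := |[2^{J_k}]^{\le k}|$, each $\varphi^J_\alpha$ becomes a point $f_\alpha \in \prod c$. By \autoref{minlcCM}, $\blc_{c,h} \ge \minLc > \kappa$, so there is a slalom $\psi \in \Swf(c,h)$ with $f_\alpha \in^* \psi$ for all $\alpha$. Decoding $\psi$ slotwise produces $\varphi^J$ on $J$ with $|\varphi^J(k)| \le k \cdot h(k)$ and $\varphi^J_\alpha(k) \subseteq \varphi^J(k)$ for cofinitely many $k$, hence $\bigcup_\alpha X_\alpha \subseteq H_{\varphi^J}$. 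Finally, pull back to $I$ via $\varphi(n) := \{t\frestr I_n : t \in \varphi^J(k)\}$ for $n \in [n_k, n_{k+1})$; then $|\varphi(n)| \le k \cdot h(k) \le n_k \le n$ and $H_{\varphi^J} \subseteq H_\varphi$, finishing the verification for $I$.

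The main technical point is the balancing act between coarsening and localization. The localization step inflates slot sizes from $k$ to $k \cdot h(k)$, so $J$ must be coarse enough (encoded by $n_k \ge k \cdot h(k)$) that after pulling back to $I$ the slalom still has width at most $n$; meanwhile $h(k)$ must still tend to infinity for \autoref{minlcCM} to deliver the width-$h$ localizing slalom $\psi$. These two demands coexist precisely because we are free to choose $h$ growing arbitrarily slowly, and this is really the only non-trivial point of the argument.
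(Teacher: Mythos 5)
Your proof is correct, and there is nothing in the paper to compare it against line by line: the survey states this lemma only as a citation of \cite[Thm.~A]{CMR2} and gives no proof. Your coarsen--encode--localize--decode argument is a sound reconstruction from exactly the ingredients the survey does quote (Shelah's characterization in \autoref{thm:a0}, Pawlikowski's $\non(\NAwf)=\minLc$, and \autoref{minlcCM}), and it is the same strategy as in the cited source. One minor remark: the ``balancing act'' at the end is less delicate than you suggest --- since the $n_k$ may be chosen arbitrarily large after $h$ is fixed, any $h$ tending to infinity works, and no slow-growth requirement on $h$ is actually needed.
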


The characterization of $\add(\Nwf)$ by Pawlikowski can be expressed as follows as a direct result of the previous result:

\begin{theorem}[{\cite[Lem.~2.3]{paw85}}]\label{chPawmn}\
$\add(\Nwf)=\min\{\bfrak,\add(\NAwf)\}$. 

\end{theorem}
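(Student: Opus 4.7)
The plan is to derive the stated identity as an immediate corollary of Pawlikowski's original formulation
\[
\add(\Nwf)=\min\{\bfrak,\,\non(\NAwf)\},
\]
from which the theorem follows by applying the just-stated equality $\non(\NAwf)=\add(\NAwf)$ to the right-hand side. To prove this intermediate identity, I would combine Bartoszy\'nski's classical localization characterization of $\add(\Nwf)$ as the cardinal $\blc_{\omega,h}$ (for any $h$ going to infinity) with Lemma~\ref{minlcCM}, which expresses $\non(\NAwf)=\minLc$ as an infimum of the $\blc_{b,h}$ over bounded $b$.

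For the easy direction, $\add(\Nwf)\le\bfrak$ is a well-known ZFC inequality (indeed $\add(\Nwf)\le\add(\Mwf)\le\bfrak$), while $\add(\Nwf)\le\add(\NAwf)$ is exactly \autoref{bas:NA}\ref{bas:NA1} applied to the translation-invariant $\sigma$-ideal $\Iwf=\Nwf$. Together these give
\[
\add(\Nwf)\le\min\{\bfrak,\add(\NAwf)\}=\min\{\bfrak,\non(\NAwf)\}.
\]

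For the substantive direction, suppose $F\subseteq\baire$ has cardinality strictly less than $\min\{\bfrak,\non(\NAwf)\}$; I would show that $F$ is localized by some slalom in $\Swf(\omega,\id_\omega)$, which by Bartoszy\'nski yields $|F|<\add(\Nwf)$. Since $|F|<\bfrak$, there exists $b\in\baire$ with $f\le^* b$ for every $f\in F$; after modifying each $f$ on a finite initial segment (which does not affect $\in^*$-localization), I may assume $F\subseteq\prod b$. Since $|F|<\non(\NAwf)=\minLc\le\blc_{b,\id_\omega}$, there is $\varphi\in\Swf(b,\id_\omega)$ with $f\in^*\varphi$ for all $f\in F$. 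Observing that $[b(n)]^{\le n}\subseteq[\omega]^{\le n}$, the slalom $\varphi$ belongs to $\Swf(\omega,\id_\omega)$, so $F$ is localized by a slalom of width $\id_\omega\to\infty$, as required.

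The main obstacle is conceptual rather than technical: it is the invocation of Bartoszy\'nski's characterization $\add(\Nwf)=\blc_{\omega,h}$ (for $h\to\infty$), which is the classical external ingredient bridging $\add(\Nwf)$ and the localization numbers used throughout the excerpt. Once this is in hand, the argument amounts to a short interpolation between ``bounded slaloms'' (controlling $\non(\NAwf)$) and ``unbounded slaloms'' (controlling $\add(\Nwf)$), with $\bfrak$ supplying precisely the missing domination step that converts the latter setting into the former.
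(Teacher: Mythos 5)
Your proposal is correct and follows the same route as the paper, which obtains the statement by combining Pawlikowski's original formulation $\add(\Nwf)=\min\{\bfrak,\non(\NAwf)\}$ with the equality $\non(\NAwf)=\add(\NAwf)$ cited just before the theorem. The paper itself offers no further argument beyond this citation, so your additional self-contained derivation of Pawlikowski's lemma --- interpolating via $\bfrak$ between Bartoszy\'nski's characterization $\add(\Nwf)=\bfrak(\Lc^*)$ and the bounded localization cardinal $\minLc=\non(\NAwf)$ of \autoref{minlcCM} --- is a correct supplement (modulo the harmless adjustment that $f\leq^* b$ should be upgraded to $f(n)<b(n)$ eventually before asserting $F\subseteq\prod b$).
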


We below focus on the $\sigma$-ideal of meager-additive sets and its uniformity. We start with the characterization for $\MAwf$ because of Bartoszy\'nski and Judah, just like in~\autoref{thm:a0}. 

\begin{theorem}[{\cite[Thm.~2.2]{bartJudah}}]\label{thm:a1}
Let $X\subseteq\cantor$.  Then $X\in\MAwf$ iff for all $I\in\Ior$ there are $J\in\Ior$ and $y\in\cantor$ such that \[\forall x\in X\, \forall^\infty n<\omega\, \exists k<\omega\colon I_k\subseteq J_n\text{\ and\ }x{\upharpoonright}I_k=y{\upharpoonright}I_k.\] Furthermore, Shelah~\cite[Thm.~18]{shmn} proved that $J$ can be found coarser than $I$, i.e, all members of $J$ are the union of members of $I$   
\end{theorem}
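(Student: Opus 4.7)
The plan is to route both directions through the standard combinatorial presentation of the meager ideal: every meager $N\subseteq\cantor$ is contained in a set of the form
\[
M(K,w):=\{z\in\cantor : \forall^\infty n\in\omega,\ z{\upharpoonright}K_n\neq w{\upharpoonright}K_n\}
\]
for some $K\in\Ior$ and $w\in\cantor$, and every such $M(K,w)$ is meager $F_\sigma$. Consequently, $X\in\MAwf$ is equivalent to: for every pair $(I,y')$ there exist $(J,z)$ with $X+M(I,y')\subseteq M(J,z)$.

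For the $(\Leftarrow)$ direction, I would fix $N\in\Mwf$, enclose it in some $M(I,y')$, and apply the combinatorial hypothesis to $I$ to produce $J\in\Ior$ and $y\in\cantor$. The task is then to verify $X+N\subseteq M(J,y+y')$: given $x\in X$ and $m\in N$, for every large $n$ there is a sub-block $I_k\subseteq J_n$ on which $x$ matches $y$; since each $I_k$ lies in a unique $J_n$, the index $k=k(n)$ tends to infinity with $n$, so we also get $m{\upharpoonright}I_k\neq y'{\upharpoonright}I_k$ eventually. Adding yields $(x+m){\upharpoonright}I_k\neq(y+y'){\upharpoonright}I_k$, and this single-block disagreement propagates to $J_n\supseteq I_k$, so $X+N\subseteq M(J,y+y')\in\Mwf$.

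For the $(\Rightarrow)$ direction, fix $I\in\Ior$ and apply $X\in\MAwf$ to the specific meager set $M(I,\vec 0)$, obtaining $J_0\in\Ior$ and $z\in\cantor$ with $X+M(I,\vec 0)\subseteq M(J_0,z)$. Using that $M(J_0,z)\subseteq M(J,z)$ whenever $J$ is a coarsening of $J_0$, I would replace $J_0$ by a common coarsening $J$ of $I$ and $J_0$; this already delivers Shelah's strengthening that $J$ may be taken coarser than $I$. It remains to claim that $(J,z)$ witnesses the combinatorial condition with $y:=z$. If not, some $x\in X$ and infinitely many $n$ satisfy $x{\upharpoonright}I_k\neq z{\upharpoonright}I_k$ for every $I_k\subseteq J_n$. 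Define $m$ by setting $m{\upharpoonright}I_k:=(x+z){\upharpoonright}I_k$ on all such exceptional sub-blocks and any nonzero pattern elsewhere; then $m\in M(I,\vec 0)$, while $(x+m){\upharpoonright}J_n=z{\upharpoonright}J_n$ on infinitely many $n$, contradicting $X+M(I,\vec 0)\subseteq M(J,z)$.

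The main obstacle is the $(\Rightarrow)$ direction, where the delicate point is to force the witness $m$ to lie in $M(I,\vec 0)$ (eventually nonzero on every $I$-block) while simultaneously making $x+m$ agree with $z$ on the exceptional $J_n$'s. This works because on any exceptional $J_n$ the pattern $(x+z){\upharpoonright}I_k$ is automatically nonzero for each $I_k\subseteq J_n$ — exactly the slack needed so that $m$ respects $M(I,\vec 0)$. The remaining bookkeeping about coarsenings and passing to tails of $n$ is routine.
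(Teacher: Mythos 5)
The paper does not prove this statement; it is quoted from Bartoszy\'nski--Judah and Shelah, so there is no in-paper proof to compare against. Your route is the standard one: reduce to the cofinal family of meager sets determined by chopped reals (the paper's \autoref{ChM} and \autoref{baseM}), and both of your directions are essentially sound --- the $(\Leftarrow)$ verification via $X+M(I,y')\subseteq M(J,y+y')$ is correct (the observation that $k(n)\to\infty$ is exactly the point one needs), and the $(\Rightarrow)$ construction of the witness $m$, including the remark that $(x+z){\upharpoonright}I_k\neq 0$ on exceptional blocks is precisely what puts $m$ into $M(I,\vec 0)$, is the right argument.

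There is one step that fails as literally stated: ``replace $J_0$ by a common coarsening $J$ of $I$ and $J_0$.'' Two interval partitions need not admit a common coarsening: if the cut points of $I$ are $0,2,4,\dots$ and those of $J_0$ are $0,1,3,5,\dots$, the only common cut point is $0$, so no partition into \emph{finite} intervals refines to both. What your argument actually needs is weaker and always available: a $J$ that is a coarsening of $I$ (so that every $I_k$ sits inside a single $J_n$, which your construction of $m$ and the ``furthermore'' clause both require) and such that almost every $J_n$ contains some \emph{whole} block $J_{0,m}$. Such a $J$ is built by taking unions of consecutive $I$-blocks, each stretch long enough to swallow an entire $J_0$-block; and the containment $M(J_0,z)\subseteq M(J,z)$ only uses that almost every $J_n$ contains a whole $J_{0,m}$ with $m$ large, not that $J$ refines to $J_0$. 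With that one-line repair the proof goes through.
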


They also established a characterization of the uniformity of the meager-additive ideal:

 \begin{theorem}[{\cite[Thm.~2.2]{bartJudah}}, see also {\cite[Thm.~2.7.14]{BJ}}]\label{addtch}\ \\ 
The cardinal $\non(\MAwf)$ is the largest cardinal $\kappa$ such that, for every bounded family $F\subseteq\baire$ of size ${<}\kappa$, \[\tag{\faPagelines}\exists r,h\in\baire\, \forall f\in F\, \exists n\in\omega\, \forall m\geq n\,  \exists k\in[r(m),r(m+1)]\colon f(k)=h(k).\label{addtchtag}\]
\end{theorem}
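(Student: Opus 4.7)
The plan is to set $\mathfrak{m} := \min\{|F| :\, F \subseteq \baire \text{ bounded and } (\faPagelines) \text{ fails for } F\}$ and argue $\non(\MAwf) = \mathfrak{m}$, which is equivalent to the ``largest $\kappa$'' formulation. Both inequalities follow the same block-encoding template that translates between subsets of $\cantor$ and bounded families in $\baire$ via a fixed $I = \seq{I_k}{k < \omega} \in \Ior$; bijections $\sigma_k \colon 2^{I_k} \to 2^{|I_k|}$ convert finite binary strings into integers. The whole argument pivots on Shelah's refinement of \autoref{thm:a1} that allows the coarsening partition $J$ to be chosen coarser than $I$.

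For $\mathfrak{m} \leq \non(\MAwf)$, I would pick $X \subseteq \cantor$ with $X \notin \MAwf$ and $|X| = \non(\MAwf)$, and by \autoref{thm:a1} fix $I \in \Ior$ witnessing $X \notin \MAwf$. With $b(k) := 2^{|I_k|}$, associate to each $x \in X$ the function $f_x \in \prod_k b(k)$ defined by $f_x(k) := \sigma_k(x \upharpoonright I_k)$, and set $F := \{f_x : x \in X\}$, a bounded family of size $\leq |X|$. If some $r, h \in \baire$ witnessed $(\faPagelines)$ for $F$, I would assume, after a routine adjustment, that $r$ is strictly increasing with $r(0) = 0$, define $J_m := \bigcup_{k \in [r(m), r(m+1))} I_k$, and set $y \in \cantor$ by $y \upharpoonright I_k := \sigma_k^{-1}(h(k))$. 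Since $f_x(k) = h(k)$ iff $x \upharpoonright I_k = y \upharpoonright I_k$, the pair $(J, y)$ would satisfy the defining clause of \autoref{thm:a1} against $I$, contradicting the choice of $I$; hence $(\faPagelines)$ fails for $F$, which yields $\mathfrak{m} \leq |F| \leq \non(\MAwf)$.

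For $\non(\MAwf) \leq \mathfrak{m}$, I would symmetrically pick a bounded $F \subseteq \prod_k b(k)$ of cardinality $\mathfrak{m}$ failing $(\faPagelines)$, choose $I \in \Ior$ with $2^{|I_k|} \geq b(k)$, and encode each $f \in F$ as $x_f \in \cantor$ with $x_f \upharpoonright I_k := \sigma_k^{-1}(f(k))$. I would then argue $X := \{x_f : f \in F\} \notin \MAwf$: if $X$ lay in $\MAwf$, then \autoref{thm:a1} applied to this $I$ (invoking Shelah's strengthening) would produce $J \in \Ior$ coarser than $I$ and $y \in \cantor$ satisfying the clause; letting $r(m)$ be the index of the first block of $I$ contained in $J_m$ and $h(k) := \sigma_k(y \upharpoonright I_k)$, the pair $(r, h)$ would witness $(\faPagelines)$ for $F$, contrary to the choice of $F$. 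Thus $\non(\MAwf) \leq |X| \leq \mathfrak{m}$.

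The genuinely delicate point, and the main obstacle, is making the quantifier alternation go through cleanly so that ``$\exists k$ with $I_k \subseteq J_n$ and $x \upharpoonright I_k = y \upharpoonright I_k$'' corresponds exactly to ``$\exists k \in [r(n), r(n+1)]$ with $f(k) = h(k)$''. For this to work, the set $\{k : I_k \subseteq J_n\}$ must be a consecutive block of indices $[r(n), r(n+1))$, which is precisely what is guaranteed by allowing $J$ to be coarser than $I$ via Shelah's refinement of \autoref{thm:a1}. Aside from this, the rest of the proof is bookkeeping with partitions and encodings of finite binary strings.
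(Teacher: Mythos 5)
Your argument is correct in outline, and its overall architecture --- two inequalities mediated by block encodings between $\cantor$ and $\prod b$ along an interval partition, with $b(k)=2^{|I_k|}$ --- is the same as the paper's, which proves the theorem in the reformulated guise \eqref{la:addtch} via \autoref{RsAMone} and \autoref{prodRb}. Your direction $\mathfrak{m}\leq\non(\MAwf)$ is essentially the contrapositive form of \autoref{prodRb}: both hinge on \autoref{thm:a1} to pass between ``$X\in\MAwf$'' and ``the block-encoded families are bounded'' (you use a single witnessing $I$ for $X\notin\MAwf$ where the paper quantifies over a dominating family of partitions, but the content is the same). Where you genuinely diverge is the other direction: you again invoke \autoref{thm:a1}, now its forward implication plus Shelah's coarseness refinement, to turn a putative $X\in\MAwf$ into a witness of \eqref{addtchtag}, whereas the paper's \autoref{RsAMone} does not touch \autoref{thm:a1} at all and instead argues from the definition of meager-additivity, using $X+B_{I^b}\in\Mwf$, Talagrand's cofinal family of meager sets (\autoref{ChM}) and the inclusion criterion \autoref{baseM}. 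Your route makes the whole theorem a translation exercise from the cited characterization, which is shorter; the paper's route buys the stronger Tukey statement $\Rrm_b\leqT\Crm_\MAwf$, which it needs anyway for the $\cov(\MAwf)$ half of \autoref{RsAMone}. Two bookkeeping points you wave at should each get a line in a full write-up: first, the closed intervals $[r(m),r(m+1)]$ in \eqref{addtchtag} versus the half-open blocks of your $J$ --- an agreement point landing exactly at $r(m+1)$ serves block $m+1$ but not block $m$, which is repaired by pairing consecutive blocks; second, the reduction to strictly increasing $r$ is only ``routine'' under the intended reading of \eqref{addtchtag}, since a literal $r\in\baire$ that is eventually constant would let degenerate families (all members agreeing with $h$ at one fixed coordinate) satisfy \eqref{addtchtag} vacuously, so you should state explicitly that the quantifier is taken over increasing $r$, exactly as the paper's reformulation via $\Ior$ and $\sqrb$ does.
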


We below introduce two cardinal invariants motivated by~\eqref{addtchtag}, which were introduced by by the author along with Mej\'ia and Rivera-Madrid in~\cite{CMR2}.

\begin{definition}
Let $b\in\baire$. For  $I\in\Ior$, and for $f, h\in\prod b$, define 

    \[f  \sqrb (I,h)\textrm{\ iff\ }\forall^\infty n\in\omega\,\exists k\in I_n\colon f(k)=h(k).\]
   
We define the following cardinal invariants associated with $\sqrb$.
\[\bfrak^{\eq}_b:=\min\set{|F|}{F\subseteq \prod b\textrm{\ and\ }\neg\exists I\in\Ior\,\exists h\in\prod b\,\forall f\in F\colon f  \sqrb (I,h)}\]
and 
\[\dfrak^{\eq}_b:=\min\set{|D|}{D\subseteq\Ior\times\prod b\textrm{\ and\ }\forall f\in \prod b\,\exists (I,h)\in D\colon f  \sqrb (I,h)}.\]    
\end{definition}

The study of uniformity of $\MAwf$ was better understood due to these cardinals, which for instance, were utilized by the author along with Mej\'ia and Rivera-Madrid~\cite{CMR2} to prove the consistency of $\non(\MAwf)>\bfrak$ and $\cov(\MAwf)<\non(\Nwf)$.

It also turns out that~\autoref{addtch} can be reformulated as
\[\tag{\faLeaf}\non(\MAwf)=\min\set{\bfrak^{\eq}_b}{b\in\baire}.\label{la:addtch}\] 
To be thorough, we provide a proof of~\eqref{la:addtch} (see~\autoref{RsAMone} and~\autoref{prodRb}).

This survey aims to study the cardinals invariants $\bfrak^{\eq}_b$ and $\dfrak^{\eq}_b$, so  one of the goal of this article is to establish:

\begin{teorema}\label{thm:a4}
The following relations in~\autoref{cichonplus} hold, where $\mathfrak x\rightarrow\mathfrak y$ means 
    $\mathfrak x\le\mathfrak y$.
\begin{figure}[H]
\centering
\begin{tikzpicture}[scale=1.06]
\small{
\node (aleph1) at (-1,3) {$\aleph_1$};
\node (addn) at (0.5,3){$\add(\Nwf)$};
\node (covn) at (0.5,7){$\cov(\Nwf)$};
\node (nonn) at (9.5,3) {$\non(\Nwf)$} ;
\node (cfn) at (9.5,7) {$\cof(\Nwf)$} ;
\node (addm) at (3.19,3) {$\add(\Mwf)$} ;
\node (covm) at (6.9,3) {$\cov(\Mwf)$} ;
\node (nonm) at (3.19,7) {$\non(\Mwf)$} ;
\node (cfm) at (6.9,7) {$\cof(\Mwf)$} ;
\node (b) at (3.19,5) {$\bfrak$};
\node (d) at (6.9,5) {$\dfrak$};
\node (c) at (11,7) {$\cfrak$};
\node (e) at (2,4.8) {\dodger{$\bfrak_b^\eq$}};
\node (deq) at (7.8,4.8) {\dodger{$\dfrak_b^\eq$}};
\node (none) at (4.12,6) {$\non(\Ewf)$};
\node (cove) at (5.8,4) {$\cov(\Ewf)$};
\draw (aleph1) edge[->] (addn)
      (addn) edge[->] (covn)
      (covn) edge [->] (nonm)
      (nonm)edge [->] (cfm)
      (cfm)edge [->] (cfn)
      (cfn) edge[->] (c);

\draw
   (addn) edge [->]  (addm)
   (addm) edge [->]  (covm)
   (covm) edge [->]  (nonn)
   (nonn) edge [->]  (cfn);
\draw (addm) edge [->] (b)
      (b)  edge [->] (nonm);
\draw (covm) edge [->] (d)
      (d)  edge[->] (cfm);
\draw (b) edge [->] (d);

\draw   (none) edge [->] (nonm)
        (none) edge [->] (cfm)
        (addm) edge [->] (cove);
      
\draw (none) edge [line width=.15cm,white,-] (nonn)
      (none) edge [->] (nonn);
      
\draw (cove) edge [line width=.15cm,white,-] (covn)
      (cove) edge [<-] (covm)
      (cove) edge [<-] (covn);

\draw (addm) edge [line width=.15cm,white,-] (none)
      (addm) edge [->] (none); 

\draw (cove) edge [line width=.15cm,white,-] (cfm)
      (cove) edge [->] (cfm);

\draw (e) edge [line width=.15cm,white,-] (none)
      (e) edge [->] (none);   

\draw (e) edge [line width=.15cm,white,-] (addm)
      (e) edge [<-] (addm);  


\draw (deq) edge [line width=.15cm,white,-] (cove)
      (deq) edge [<-] (cove);  

\draw (deq) edge [line width=.15cm,white,-] (cfm)
      (deq) edge [->] (cfm);  

}
\end{tikzpicture}
\caption{Including $\bfrak^{\eq}_b$ and  $\dfrak^{\eq}_b$ to Cicho\'n's diagram.}
\label{cichonplus}
\end{figure}
\end{teorema}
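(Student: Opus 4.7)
The diagram adds four new edges relative to Cichoń's: (a)~$\add(\Mwf)\le\bfrak^{\eq}_b$, (b)~$\bfrak^{\eq}_b\le\non(\Ewf)$, (c)~$\cov(\Ewf)\le\dfrak^{\eq}_b$, and (d)~$\dfrak^{\eq}_b\le\cof(\Mwf)$. They split into two dual pairs, (a)/(d) and (b)/(c), and I would attack each pair with one combinatorial reduction (for the outer pair) and one ideal-theoretic argument (for the inner pair).

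For the outer pair (a)/(d), I would invoke Bartoszyński's chopped-real characterisation of $\add(\Mwf)$: it is the smallest $|F|$ with $F\subseteq\baire$ admitting no $(I,g)\in\Ior\times\baire$ such that $\forall f\in F\,\forall^{\infty}n\,\exists k\in I_n\colon f(k)=g(k)$, with the dual statement for $\cof(\Mwf)$. To deduce (a), given a $\sqrb$-unbounded $F\subseteq\prod b$ and any $(I,g)$, I would clamp $g$ to $h\in\prod b$ by $h(k)=g(k)$ if $g(k)<b(k)$ and $h(k)=0$ otherwise, and observe that if some $f\in F$ avoids $\sqrb$-matching $(I,h)$ on infinitely many blocks, then on those blocks $f(k)\ne h(k)$ forces $f(k)\ne g(k)$ (directly when $g(k)<b(k)$, and automatically when $g(k)\ge b(k)$ since $f(k)<b(k)$). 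Thus $F$ witnesses Bartoszyński's condition in $\baire$, so $\add(\Mwf)\le|F|=\bfrak^{\eq}_b$. A completely dual truncation converts a cofinal chopped-real family of size $\cof(\Mwf)$ into a $\sqrb$-dominating family in $\prod b$, giving (d).

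For the inner pair (b)/(c), the key is the ideal-theoretic behaviour of the $\sqrb$-matching set $M_{(I,h)}:=\{f\in\prod b:f\sqrb(I,h)\}$. A straightforward Baire-category computation first shows $M_{(I,h)}$ is meager in $\prod b$: its complement is a dense $G_\delta$, since any finite condition in $\prod b$ can be extended to differ from $h$ pointwise on some late block. A finer measure computation then shows $M_{(I,h)}\in\Ewf$: writing $M_{(I,h)}=\bigcup_N M^N$ with $M^N=\bigcap_{n\ge N}\{f:\exists k\in I_n,\ f(k)=h(k)\}$ closed, and using independence across blocks, $\mu(M^N)=\prod_{n\ge N}\bigl(1-\prod_{k\in I_n}(1-1/b(k))\bigr)=0$ by the standard criterion $\prod(1-q_n)=0\iff\sum q_n=\infty$, under a mild growth condition on $b$ such as $\sum_k 1/b(k)<\infty$. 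Consequently, a non-$\Ewf$ subset of $\prod b$ of size $\non(\Ewf)$ is $\sqrb$-unbounded, yielding (b); and any $\sqrb$-dominating family produces an $\Ewf$-cover of $\prod b$ of the same cardinality, yielding (c).

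The main obstacle is precisely the measure-theoretic step in the inner pair: for slowly growing $b$ (say $b\equiv 2$ with unbounded block sizes $|I_n|$), $M_{(I,h)}$ can have positive measure, so the direct ideal inclusion fails without a hypothesis on $b$. I expect the paper either to restrict $b$ (e.g.\ $\sum_k 1/b(k)<\infty$) or, more likely, to replace the direct set-theoretic inclusion by a Tukey morphism between the $\sqrb$-relational system $(\prod b,\Ior\times\prod b,\sqrb)$ and the chopped-real system characterising $\non(\Ewf)$ and $\cov(\Ewf)$; in either case, combined with the outer pair, this completes the placement of $\bfrak^{\eq}_b$ and $\dfrak^{\eq}_b$ in the diagram.
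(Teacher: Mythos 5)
Your proposal is correct, and it isolates exactly the four new arrows; for the inner pair your argument coincides with the paper's. There, \autoref{lem:b4}~\ref{lem:b4:1} establishes the Tukey morphism $\Crm_\Ewf\leqT\Rrm_b$ by sending $(J,h)$ to the $F_\sigma$ set $\set{x\in \prod b}{x \sqrb (J,h)}$ and checking it is null under the hypothesis $\sum_{k<\omega}1/b(k)<\infty$ --- precisely the growth restriction you flag as necessary; the paper indeed assumes it tacitly in \autoref{thm:a4} and treats the complementary case $\sum_{k<\omega}1/b(k)=\infty$ separately in \autoref{thm:a5}, where the $\Ewf$-arrows are replaced by $\cov(\Nwf)\leq\bfrak_b^\eq$ and $\dfrak_b^\eq\leq\non(\Nwf)$ via \autoref{lem:b4}~\ref{lem:b4:2}. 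For the outer pair you take a genuinely different route: you quote the Bartoszy\'nski--Judah characterization of $\add(\Mwf)$ and $\cof(\Mwf)$ as the unbounding and dominating numbers of the block-matching relation on $\baire\times(\Ior\times\baire)$, and transfer to $\prod b$ by truncating $g$ below $b$; this is sound (your truncation is exactly the monotonicity $\Rrm_b\leqT\Rrm_{b'}$ for $b\leq^* b'$ of \autoref{exa:a2}, pushed to the constant value $\omega$), but it outsources the substantive content to the cited characterization. The paper instead derives $\dfrak_b^\eq\leq\cof(\Mwf)$ from the composition $\Rrm_b\leqT(\Ed_b^\perp;\Ior)$ of \autoref{dRbup} together with Miller's formulas in \autoref{charM2}, and obtains $\add(\Mwf)\leq\bfrak_b^\eq$ for free from $\add(\Mwf)\leq\add(\MAwf)\leq\non(\MAwf)=\min\set{\bfrak_b^\eq}{b\in\baire}$ (\autoref{bas:NA} and \eqref{la:addtch}). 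The two routes are essentially equivalent in strength; yours is shorter if one grants the classical characterization, while the paper's is self-contained within its relational-system machinery and additionally yields the sharper intermediate bounds $\min\{\dalc_{b,1},\bfrak\}\leq\bfrak_b^\eq$ and $\dfrak_b^\eq\leq\max\{\balc_{b,1},\dfrak\}$.
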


\begin{teorema}\label{thm:a5}
The following relations in~\autoref{cichonplus+} hold, where $\mathfrak x\rightarrow\mathfrak y$ means 
    $\mathfrak x\le\mathfrak y$.
\begin{figure}[H]
\centering
\begin{tikzpicture}[scale=1.06]
\small{
\node (aleph1) at (-1,3) {$\aleph_1$};
\node (addn) at (0.5,3){$\add(\Nwf)$};
\node (covn) at (0.5,7){$\cov(\Nwf)$};
\node (nonn) at (9.5,3) {$\non(\Nwf)$} ;
\node (cfn) at (9.5,7) {$\cof(\Nwf)$} ;
\node (addm) at (3.19,3) {$\add(\Mwf)$} ;
\node (covm) at (6.9,3) {$\cov(\Mwf)$} ;
\node (nonm) at (3.19,7) {$\non(\Mwf)$} ;
\node (cfm) at (6.9,7) {$\cof(\Mwf)$} ;
\node (b) at (3.19,5) {$\bfrak$};
\node (d) at (6.9,5) {$\dfrak$};
\node (c) at (11,7) {$\cfrak$};
\node (e) at (2,4.8) {\dodger{$\bfrak_b^\eq$}};
\node (deq) at (7.8,4.8) {\dodger{$\dfrak_b^\eq$}};
\node (none) at (4.12,6) {$\non(\Ewf)$};
\node (cove) at (5.8,4) {$\cov(\Ewf)$};
\draw (aleph1) edge[->] (addn)
      (addn) edge[->] (covn)
      (covn) edge [->] (nonm)
      (nonm)edge [->] (cfm)
      (cfm)edge [->] (cfn)
      (cfn) edge[->] (c);

\draw
   (addn) edge [->]  (addm)
   (addm) edge [->]  (covm)
   (covm) edge [->]  (nonn)
   (nonn) edge [->]  (cfn);
\draw (addm) edge [->] (b)
      (b)  edge [->] (nonm);
\draw (covm) edge [->] (d)
      (d)  edge[->] (cfm);
\draw (b) edge [->] (d);

\draw   (none) edge [->] (nonm)
        (none) edge [->] (cfm)
        (addm) edge [->] (cove);
      
\draw (none) edge [line width=.15cm,white,-] (nonn)
      (none) edge [->] (nonn);
      
\draw (cove) edge [line width=.15cm,white,-] (covn)
      (cove) edge [<-] (covm)
      (cove) edge [<-] (covn);

\draw (addm) edge [line width=.15cm,white,-] (none)
      (addm) edge [->] (none); 

\draw (cove) edge [line width=.15cm,white,-] (cfm)
      (cove) edge [->] (cfm);

\draw (e) edge [line width=.15cm,white,-] (covn)
      (e) edge [<-] (covn);

\draw (e) edge [line width=.15cm,white,-] (nonm)
      (e) edge [->] (nonm);

\draw (e) edge [line width=.15cm,white,-] (addm)
      (e) edge [<-] (addm);  


\draw (deq) edge [line width=.15cm,white,-] (covm)
      (deq) edge [<-] (covm);

      \draw (deq) edge [line width=.15cm,white,-] (nonn)
      (deq) edge [->] (nonn);

\draw (deq) edge [line width=.15cm,white,-] (cfm)
      (deq) edge [->] (cfm);  

}
\end{tikzpicture}
\caption{Including $\bfrak^{\eq}_b$ and  $\dfrak^{\eq}_b$ to Cicho\'n's diagram. Additionally, if $\sum_{k<\omega}\frac{1}{b(k)} = \infty$ then $\cov(\Nwf)\leq\bfrak_b^\eq$ and $\dfrak_b^\eq\leq\non(\Nwf)$. }
\label{cichonplus+}
\end{figure}
\end{teorema}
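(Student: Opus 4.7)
The new content of \autoref{thm:a5} over \autoref{thm:a4} is the pair of inequalities $\cov(\Nwf)\leq\bfrak^\eq_b$ and $\dfrak^\eq_b\leq\non(\Nwf)$ under the hypothesis $\sum_{k<\omega}\frac{1}{b(k)}=\infty$, so my plan focuses on these two inequalities; the remaining arrows are inherited verbatim from \autoref{thm:a4}. The idea is to equip $\prod b$ with its canonical product of uniform probability measures $\mu$ (so that its null ideal is isomorphic to $\Nwf$ and hence gives the same $\cov$ and $\non$), and to exhibit a single well-chosen interval partition $I_0\in\Ior$ that witnesses both inequalities simultaneously via one application of the first Borel--Cantelli lemma.

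The construction of $I_0=\seq{(I_0)_n}{n\in\omega}$ is the step where the divergence assumption is consumed. Using that $\sum_{k<\omega}\frac{1}{b(k)}=\infty$, I greedily select the intervals $(I_0)_n$ so that $\sum_{k\in (I_0)_n}\frac{1}{b(k)}\geq 2\log(n+1)$ for every $n$. For each $f\in\prod b$, set
\[
A_n^f:=\set{h\in\prod b}{\forall k\in (I_0)_n\colon h(k)\neq f(k)}.
\]
The events $\set{A_n^f}{n\in\omega}$ depend on pairwise disjoint blocks of coordinates, hence are mutually $\mu$-independent, and
\[
\mu(A_n^f)=\prod_{k\in (I_0)_n}\!\left(1-\tfrac{1}{b(k)}\right)\leq \exp\!\left(-\!\sum_{k\in (I_0)_n}\tfrac{1}{b(k)}\right)\leq (n+1)^{-2}.
\]
Thus $\sum_n\mu(A_n^f)<\infty$, and the first Borel--Cantelli lemma yields $\mu(\limsup_n A_n^f)=0$. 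Unpacking the definition of $\sqrb$, this says that the set $N_f:=\set{h\in\prod b}{f\sqrb (I_0,h)}$ is conull for every $f\in\prod b$.

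Both targeted inequalities then fall out. For $\cov(\Nwf)\leq\bfrak^\eq_b$, given any $F\subseteq\prod b$ with $|F|<\cov(\Nwf)$, the set $\bigcup_{f\in F}\left((\prod b)\setminus N_f\right)$ is a union of fewer than $\cov(\Nwf)$ null sets and therefore cannot exhaust $\prod b$; any $h$ outside that union witnesses $f\sqrb (I_0,h)$ for every $f\in F$, so $F$ cannot witness $\bfrak^\eq_b$. For $\dfrak^\eq_b\leq\non(\Nwf)$, fix a non-null $H\subseteq\prod b$ with $|H|=\non(\Nwf)$ and let $D:=\set{(I_0,h)}{h\in H}$; given $f\in\prod b$, the conull set $N_f$ must meet $H$, so $D$ is a $\sqrb$-dominating family of the required size. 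The only genuinely nontrivial step, and therefore the main obstacle, is the production of $I_0$: this is exactly the point at which $\sum 1/b(k)=\infty$ is used, and once the partition is in hand everything reduces to one Borel--Cantelli estimate together with the standard measure-algebra identification between $\prod b$ and $\cantor$ needed to preserve the meaning of $\cov(\Nwf)$ and $\non(\Nwf)$.
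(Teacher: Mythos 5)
Your proposal is correct and is essentially the paper's own argument: the paper proves the new content of this theorem in \autoref{lem:b4}~(2) by choosing $J\in\Ior$ with $\sum_{k\in J_n}\frac{1}{b(k)}\geq n$ and estimating $\Lb_b\left(\prod b\smallsetminus\set{x\in\prod b}{x\sqrb (J,h)}\right)\leq \lim_{m\to\infty}\sum_{n\geq m}\prod_{k\in J_n}\left(1-\frac{1}{b(k)}\right)\leq\lim_{m\to\infty}\sum_{n\geq m}e^{-n}=0$, which is exactly your Borel--Cantelli computation with a slightly different growth rate. The only cosmetic difference is that the paper packages the conclusion as a Tukey connection $\Crm_\Nwf\leqT\Rrm_b^\perp$ (using the symmetry of the relation $\exists k\in J_n\colon f(k)=h(k)$ in $f$ and $h$) and reads off the two cardinal inequalities, whereas you unwind them directly.
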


\autoref{thm:a4}-\ref{thm:a5} will be proved in~\autoref{sec:zfc}. In~\autoref{sec:cons}, we present one forcing notion closely
related to the $\bfrak_b^\eq$, which we call $\Por_b$ and illustrate the effect of iterating $\Por_b$ on Cicho\'n’s diagram. In addition, we prove the following:

\begin{teorema}[\autoref{thm:c2}]\label{thm:a6}
Let $\aleph_1\leq \lambda_1\leq \lambda_2\leq \lambda_3 \leq \lambda_4$ be regular cardinals, and assume $\lambda_5$ is a cardinal such that $\lambda_5\geq\lambda_4$ $\lambda_5= \lambda_5^{\aleph_0}$ and $\cf([\lambda_5]^{<\lambda_i}) = \lambda_5$ for $i=1,\ldots,3$. Then, we can construct a FS iteration of length $\lambda_5$ of ccc posets forcing~\autoref{Figthm:a6}.
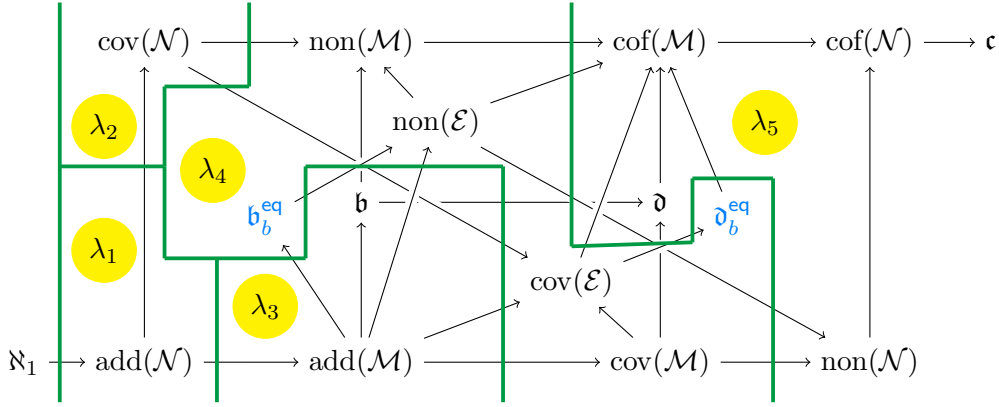
\begin{figure}[H]
\centering
\begin{tikzpicture}[scale=1.06]
\small{
\node (aleph1) at (-1,3) {$\aleph_1$};
\node (addn) at (0.5,3){$\add(\Nwf)$};
\node (covn) at (0.5,7){$\cov(\Nwf)$};
\node (nonn) at (9.5,3) {$\non(\Nwf)$} ;
\node (cfn) at (9.5,7) {$\cof(\Nwf)$} ;
\node (addm) at (3.19,3) {$\add(\Mwf)$} ;
\node (covm) at (6.9,3) {$\cov(\Mwf)$} ;
\node (nonm) at (3.19,7) {$\non(\Mwf)$} ;
\node (cfm) at (6.9,7) {$\cof(\Mwf)$} ;
\node (b) at (3.19,5) {$\bfrak$};
\node (d) at (6.9,5) {$\dfrak$};
\node (c) at (11,7) {$\cfrak$};
\node (e) at (2,4.8) {\dodger{$\bfrak_b^\eq$}};
\node (deq) at (7.8,4.8) {\dodger{$\dfrak_b^\eq$}};
\node (none) at (4.12,6) {$\non(\Ewf)$};
\node (cove) at (5.8,4) {$\cov(\Ewf)$};
\draw (aleph1) edge[->] (addn)
      (addn) edge[->] (covn)
      (covn) edge [->] (nonm)
      (nonm)edge [->] (cfm)
      (cfm)edge [->] (cfn)
      (cfn) edge[->] (c);

\draw
   (addn) edge [->]  (addm)
   (addm) edge [->]  (covm)
   (covm) edge [->]  (nonn)
   (nonn) edge [->]  (cfn);
\draw (addm) edge [->] (b)
      (b)  edge [->] (nonm);
\draw (covm) edge [->] (d)
      (d)  edge[->] (cfm);
\draw (b) edge [->] (d);

\draw   (none) edge [->] (nonm)
        (none) edge [->] (cfm)
        (addm) edge [->] (cove);
      
\draw (none) edge [line width=.15cm,white,-] (nonn)
      (none) edge [->] (nonn);
      
\draw (cove) edge [line width=.15cm,white,-] (covn)
      (cove) edge [<-] (covm)
      (cove) edge [<-] (covn);

\draw (addm) edge [line width=.15cm,white,-] (none)
      (addm) edge [->] (none); 

\draw (cove) edge [line width=.15cm,white,-] (cfm)
      (cove) edge [->] (cfm);

\draw (e) edge [line width=.15cm,white,-] (none)
      (e) edge [->] (none);   

\draw (e) edge [line width=.15cm,white,-] (addm)
      (e) edge [<-] (addm);  


\draw (deq) edge [line width=.15cm,white,-] (cove)
      (deq) edge [<-] (cove);  

\draw (deq) edge [line width=.15cm,white,-] (cfm)
      (deq) edge [->] (cfm);  

\draw[color=sug,line width=.05cm] (0.75,4.3)--(2.5,4.3); 
\draw[color=sug,line width=.05cm] (2.5,5.45)--(4.95,5.45);  
\draw[color=sug,line width=.05cm] (-0.55,5.45)--(0.75,5.45); 
\draw[color=sug,line width=.05cm] (0.75,6.45)--(1.8,6.45);   
\draw[color=sug,line width=.05cm] (5.8,4.45)--(5.8,7.5);
\draw[color=sug,line width=.05cm] (7.3,4.5)--(7.3,5.3);
\draw[color=sug,line width=.05cm] (8.3,2.5)--(8.3,5.3);

\draw[color=sug,line width=.05cm] (4.95,2.5)--(4.95,5.45);
\draw[color=sug,line width=.05cm] (-0.55,2.5)--(-0.55,7.5);
\draw[color=sug,line width=.05cm] (1.8,6.45)--(1.8,7.5);
\draw[color=sug,line width=.05cm] (0.75,5.45)--(0.75,6.45); 
\draw[color=sug,line width=.05cm] (0.75,4.3)--(0.75,5.45);
\draw[color=sug,line width=.05cm] (1.4,2.5)--(1.4,4.3);
\draw[color=sug,line width=.05cm] (2.5,4.3)--(2.5,5.45);

\draw[color=sug,line width=.05cm] (5.8,4.45)--(7.3,4.5);
\draw[color=sug,line width=.05cm] (7.3,5.3)--(8.3,5.3);

\draw[circle, fill=yellow,color=yellow] (1.35,5.4) circle (0.4);
\draw[circle, fill=yellow,color=yellow] (0,4.4) circle (0.4);
\draw[circle, fill=yellow,color=yellow] (0,5.95) circle (0.4);
\draw[circle, fill=yellow,color=yellow] (2,3.7) circle (0.4);
\draw[circle, fill=yellow,color=yellow] (8.2,6) circle (0.4);
\node at (8.2,6) {$\lambda_5$};
\node at (1.35,5.4) {$\lambda_4$};
\node at (0,4.4) {$\lambda_1$};
\node at (0,5.95) {$\lambda_2$};
\node at (2,3.7) {$\lambda_3$};
}
\end{tikzpicture}
\caption{The constellation of Cicho\'n's diagram forced in \autoref{thm:a6}.}
\label{Figthm:a6}
\end{figure}
\end{teorema}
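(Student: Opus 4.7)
The plan is to carry out a finite-support iteration $\la \Por_\alpha, \Qnm_\alpha : \alpha < \lambda_5\ra$ of ccc forcings of length $\lambda_5$, starting from a ground model of GCH so that the cofinality assumptions $\cf([\lambda_5]^{<\lambda_i})=\lambda_5$ leave enough bookkeeping room, while $\lambda_5^{\aleph_0}=\lambda_5$ together with the ccc guarantee that the final continuum equals $\lambda_5$. A bookkeeping function enumerates nice names for the combinatorial objects relevant to each invariant appearing in Figure~\ref{Figthm:a6}, and at each stage it chooses one of several ccc iterands so that the number of stages devoted to each kind of forcing matches the required cardinal value.

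The natural iterands are: (i) the forcing $\Por_b$ introduced in Section~\ref{sec:cons}, used to push $\bfrak_b^\eq$ up to $\lambda_4$ and to supply $\sqrb$-witnesses for families already in intermediate models; (ii) random forcing $\Bor$ or a suitable subalgebra, used to enlarge $\cov(\Nwf)$ up to $\lambda_2$; (iii) an eventually different real forcing or a Cohen-like partial order, used to fine-tune the remaining Meager- and Null-side invariants; and (iv) trivial padding at stages where no witness is pending. The allocation is arranged so that, reading off Figure~\ref{Figthm:a6}, each $\lambda_i$ is realised by the block of cardinals marked with its yellow circle.

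Upper bounds on each invariant will follow from a direct enumeration argument: any candidate family of size strictly less than $\lambda_i$ appears in some intermediate model $V_\alpha$ and is defeated at the next stage where the appropriate generic is added, so the value of the invariant is at most $\lambda_i$. Lower bounds will be proved by preservation theorems of the Judah--Shelah--Brendle--Mej\'ia type: one shows that each individual iterand preserves a chosen witness (for $\bfrak_b^\eq$ a $\sqrb$-unbounded family in $\prod b$, for $\cov(\Nwf)$ a non-null set, and so on), and then FS-iteration preservation lifts this along the whole iteration.

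The main obstacle, and the technical heart of Section~\ref{sec:cons}, will be to fit $\Por_b$ into the preservation framework: specifically, to prove that $\Por_b$ preserves non-null sets (so that it does not collapse $\cov(\Nwf)$ below $\lambda_2$) and that each of the other iterands preserves $\sqrb$-unbounded families (so that $\bfrak_b^\eq$ survives up to $\lambda_4$). This requires identifying the correct linked or Knaster-type structure on $\Por_b$ and proving a one-step preservation lemma for $\sqrb$. Once those compatibility lemmas are in hand, the global construction follows the template of the consistency results in~\cite{CMR2}, and realising the five-parameter constellation of Figure~\ref{Figthm:a6} reduces to careful bookkeeping along the iteration.
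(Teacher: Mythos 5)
Your proposal reproduces the generic template (finite-support iteration, bookkeeping, goodness/preservation), but it misses or misidentifies the specific mechanisms that make the construction of \autoref{thm:c2} work, so as written there are genuine gaps. First, the iteration the paper builds is not of ordinal length $\lambda_5$ but of length $\lambda_5\lambda_4$ (ordinal product, still of cardinality $\lambda_5$): its cofinality $\lambda_4$, together with the fact that $\Por_b$ adds an $\Rrm_b$-dominating real at the start of each of the $\lambda_4$ blocks, is what forces $\Rrm_b\eqT\Mbf\eqT\lambda_4$ via \autoref{lem:c5}, i.e.\ $\bfrak_b^\eq=\non(\Mwf)=\cov(\Mwf)=\dfrak_b^\eq=\lambda_4$. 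These four values are \emph{not} obtained by bookkeeping; with a literal length-$\lambda_5$ iteration of non-trivial ccc posets you would instead get $\non(\Mwf)=\cov(\Mwf)=\cf(\lambda_5)$, which is wrong whenever $\cf(\lambda_5)\neq\lambda_4$. Second, your list of iterands cannot realise three distinct values on the left side. The paper uses localization, random and Hechler forcing \emph{restricted to transitive models} of size ${<}\lambda_1$, ${<}\lambda_2$, ${<}\lambda_3$ respectively ($\LOCor^{\dot N_\xi}$, $\Bor^{\dot N_\xi}$, $\Dor^{\dot N_\xi}$); the point is that a poset of size ${<}\theta$ is automatically $\theta$-$\Rrm$-good for every relevant Polish relational system (\autoref{lem:c2}), which is exactly what lets $\add(\Nwf)=\lambda_1$, $\cov(\Nwf)=\lambda_2$ and $\bfrak=\lambda_3$ coexist. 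You mention neither localization forcing (needed to raise $\add(\Nwf)$ to $\lambda_1$) nor Hechler forcing (needed to raise $\bfrak$ to $\lambda_3$), and ``a suitable subalgebra'' of random forcing is too vague to carry the $\lambda_2$-goodness argument.

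You also locate the technical heart in showing that $\Por_b$ preserves non-null sets, but that is the easy part: $\Por_b$ is $\sigma$-centered, and $\sigma$-centered posets are already $\Cn$-good and $\Lc^*$-good. The genuinely delicate point is the opposite one: $\Por_b$ occurs cofinally often yet must not add dominating reals, or else $\bfrak$ would climb to $\lambda_4$ together with $\bfrak_b^\eq$; this is exactly what the uniform $\sigma$-uf-lim-linkedness of $\Por_b$ (\autoref{lem:b11}) combined with \autoref{lem:d11} provides. Likewise, no one-step preservation lemma for $\sqrb$-unbounded families is needed: the upper bound $\bfrak_b^\eq\leq\lambda_4$ comes for free from $\bfrak_b^\eq\leq\non(\Mwf)$ and the Cohen reals appearing at limit stages, while the lower bound comes from \autoref{lem:c4}/\autoref{lem:c5}. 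Until your outline incorporates the block structure of length $\lambda_5\lambda_4$, the small-model iterands, and the uf-linkedness of $\Por_b$, it does not constitute a proof of the theorem.
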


We close this section by reviewing everything needed to develop our targets.

\begin{definition}\label{def:relsys}
We say that $\Rrm=\la X, Y, \sqsubset\ra$ is a \textit{relational system} if it consists of two non-empty sets $X$ and $Y$ and a relation $\sqsubset$.
\begin{enumerate}[label=(\arabic*)]
    \item A set $F\subseteq X$ is \emph{$\Rrm$-bounded} if $\exists\, y\in Y\ \forall\, x\in F\colon x \sqsubset y$. 
    \item A set $E\subseteq Y$ is \emph{$\Rrm$-dominating} if $\forall\, x\in X\ \exists\, y\in E\colon x \sqsubset y$. 
\end{enumerate}

We associate two cardinal invariants with this relational system $\Rrm$: 
\begin{itemize}
    \item[{}] $\bfrak(\Rrm):=\min\{|F|\colon  F\subseteq X  \text{ is }\Rrm\text{-unbounded}\}$ the \emph{unbounding number of $\Rrm$}, and
    
    \item[{}] $\dfrak(\Rrm):=\min\{|D|\colon  D\subseteq Y \text{ is } \Rrm\text{-dominating}\}$ the \emph{dominating number of $\Rrm$}.
\end{itemize}
\end{definition}

Note that $\dfrak(\Rrm)=1$ iff $\bfrak(\Rrm)$ is undefined (i.e.\ there are no $\Rrm$-unbounded sets, which is the same as saying that $X$ is $\Rrm$-bounded).  Dually, $\bfrak(\Rrm)=1$ iff $\dfrak(\Rrm)$ is undefined (i.e.\ there are no $\Rrm$-dominating families).

Directed preorders provide a very representative broad example of relational systems.

\begin{definition}\label{examSdir}
We say that $\la S,\leq_S\ra$ is a \emph{directed preorder} if it is a preorder (i.e.\ $\leq_S$ is a reflexive and transitive relation on $S$) such that 
\[\forall\, x, y\in S\ \exists\, z\in S\colon x\leq_S z\text{ and }y\leq_S z.\] 
A directed preorder $\la S,\leq_S\ra$ is seen as the relational system $S=\la S, S,\leq_S\ra$, and its associated cardinal invariants are denoted by $\bfrak(S)$ and $\dfrak(S)$. The cardinal $\dfrak(S)$ is actually the \emph{cofinality of $S$}, typically denoted by $\cof(S)$ or $\cf(S)$.
\end{definition}



\begin{example}\label{Charb-d}
Define the following relation on $\Ior$:
   \[ I \sqsubseteq J \text{ iff } \forall^\infty n<\omega\, \exists m<\omega\colon I_m\subseteq J_n.
   \]
Note that $\sqsubseteq$ is a directed preorder on $\Ior$, so we think of $\Ior$ as the relational system with the relation $\sqsubseteq$. 
In Blass~\cite{blass}, it is proved that $\Ior \eqT \omega^\omega$. Hence, $\bfrak=\bfrak(\Ior)$ and $\dfrak=\dfrak(\Ior)$. 
\end{example}

\begin{example}\label{exm:Iwf}
We consider the following relational systems for any ideal $\Iwf$ on $X$. 
\begin{enumerate}[label=(\arabic*)]
    \item $\Iwf:=\la\Iwf,\subseteq\ra$ is a directed partial order. Note that 
    \begin{align*}
        \bfrak(\Iwf) & =\add(\Iwf) \\ 
        \dfrak(\Iwf) & =\cof(\Iwf)
    \end{align*}
    
    \item $\Cbf_\Iwf:=\la X,\Iwf,\in\ra$. 
    When $\bigcup\Iwf = X$,
    \begin{align*}
        \bfrak(\Cbf_\Iwf) & =\non(\Iwf) \\
        \dfrak(\Cbf_\Iwf) & =\cov(\Iwf) 
    \end{align*}    
\end{enumerate}
\end{example}

\begin{example}\label{exa:a2}
For $b\in\omega^\omega$ define the relational system $\Rrm_b:=\la\prod b,\Ior\times\prod b,\sqrb\ra$.  Notice that
 \begin{enumerate}[label=\rm(\arabic*)]
    \item  $\Rrm_b\eqT\la\prod b,\Ior\times\baire,\sqrb\ra$. Then $\bfrak(\Rrm_b)=\bfrak_{b}^\eq$ and $\dfrak(\Rrm_b)=\dfrak_{b}^\eq$.   
     \item If $b'\in\omega^\omega$ and
$b\leq^* b'$, then $\Rrm_b\leqT\Rrm_{b'}$. In particular, $\bfrak_{b'}^\eq \leq \bfrak_b^\eq$ and $\dfrak_b^\eq\leq \dfrak_{b'}^\eq$. 
\end{enumerate} 
\end{example}

\begin{remark}
If $b\ngeq^*2$ then we can find some $(I,h)\in \Ior\times \prod b$ such that $f\sqrb (I,h)$ for all $f\in\prod b$, so $\dfrak(\Rrm_b) = 1$ and $\bfrak(\Rrm_b)$ is undefined.    
\end{remark}

We now review the products of relational systems.

\begin{definition}\label{def:prodrel}
Let $\overline{\Rrm} = \seq{\Rrm_i}{i\in K}$ be a sequence of relational systems $\Rrm_i = \la X_i,Y_i,\sqsubset_i\ra$. Define $\prod \overline{\Rrm} = \prod_{i\in K} \Rrm_i:=\left\la \prod_{i\in K}X_i, \prod_{i\in K}Y_i, \sqsubset^\times \right\ra$ where $x \sqsubset^\times y$ iff $x_i\sqsubset_i y_i$ for all $i\in K$.

For two relational systems $\Rrm$ and $\Rrm'$, write $\Rrm\times \Rrm'$ to denote their product, and when $\Rrm_i = \Rrm$ for all $i\in K$, we write $\Rrm^K := \prod \overline{\Rrm}$.
\end{definition}

\begin{fct}[{\cite{CarMej23}}]\label{products}
Let $\overline{\Rrm}$ be as in \autoref{def:prodrel}. Then $\sup_{i\in K}\dfrak(\Rrm_i)\leq\dfrak(\prod \overline{\Rrm})\leq\prod_{i\in K}\dfrak(\Rrm_i)$ and $\bfrak(\prod \overline{\Rrm})=\min_{i\in K}\bfrak(\Rrm_i)$.
\end{fct}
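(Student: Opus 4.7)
The plan is to prove each of the three assertions by a short projection-or-pointwise-product argument. Write $\overline{\Rrm}=\seq{\Rrm_i}{i\in K}$ with $\Rrm_i=\la X_i,Y_i,\sqsubset_i\ra$, and for each $i\in K$ fix an $\Rrm_i$-dominating $D_i\subseteq Y_i$ of minimum size $\dfrak(\Rrm_i)$; when $\bfrak(\Rrm_i)$ is defined, fix an $\Rrm_i$-unbounded $F_i\subseteq X_i$ of size $\bfrak(\Rrm_i)$.

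For the upper bound $\dfrak(\prod\overline{\Rrm})\leq\prod_{i\in K}\dfrak(\Rrm_i)$, I would show that the product family $D:=\prod_{i\in K} D_i$ is itself dominating: given $x=(x_i)_{i\in K}\in\prod_i X_i$, pick $y_i\in D_i$ with $x_i\sqsubset_i y_i$ for each $i$; then $(y_i)_{i\in K}\in D$ and $x\sqsubset^\times (y_i)_{i\in K}$. The cardinal bound on $|D|$ is immediate. For the lower bound $\sup_{i\in K}\dfrak(\Rrm_i)\leq\dfrak(\prod\overline{\Rrm})$, I would project: for any $\prod\overline{\Rrm}$-dominating $E\subseteq\prod_i Y_i$, the coordinate projection $E_i:=\set{y(i)}{y\in E}$ is $\Rrm_i$-dominating, since for $x_i\in X_i$ one extends $x_i$ to some $x\in\prod_j X_j$ (using that each $X_j\neq\vacio$), finds $y\in E$ with $x\sqsubset^\times y$, and reads off $x_i\sqsubset_i y(i)$. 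Hence $\dfrak(\Rrm_i)\leq|E_i|\leq|E|$ for every $i$.

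For the equality $\bfrak(\prod\overline{\Rrm})=\min_{i\in K}\bfrak(\Rrm_i)$, the $\leq$-direction uses lifting: fix $i\in K$ and some $\bar{x}\in\prod_{j\neq i}X_j$, and for each $f\in F_i$ let $f^{*}$ be the tuple with $f^{*}(i)=f$ and $f^{*}(j)=\bar{x}(j)$ for $j\neq i$. If $\{f^{*}:f\in F_i\}$ were $\prod\overline{\Rrm}$-bounded by some $y$, then $y(i)$ would $\Rrm_i$-bound $F_i$, contradicting the choice of $F_i$; hence $\bfrak(\prod\overline{\Rrm})\leq|F_i|=\bfrak(\Rrm_i)$ for each $i$. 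For the $\geq$-direction I would argue contrapositively: if $F\subseteq\prod_i X_i$ satisfies $|F|<\min_{i\in K}\bfrak(\Rrm_i)$, then for each $i$ the projection $\set{x(i)}{x\in F}$ has size $<\bfrak(\Rrm_i)$ and is therefore $\Rrm_i$-bounded by some $y_i\in Y_i$; the tuple $(y_i)_{i\in K}$ then bounds $F$ in $\prod\overline{\Rrm}$.

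There is no substantive obstacle; the proof is entirely formal once one has chosen the right witnesses coordinatewise. The only minor wrinkles are standard: invoking \textsf{AC} to form the product $\prod_i D_i$ or to pick per-coordinate witnesses when $K$ is infinite, and handling the degenerate cases where some $\bfrak(\Rrm_i)$ is undefined via the conventions of \autoref{def:relsys} (so that an undefined value contributes trivially to the minimum).
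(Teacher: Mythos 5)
Your argument is correct: the coordinatewise product of dominating families, the projection of a dominating (resp.\ small) family to each coordinate, and the lifting of an unbounded family from a single coordinate are exactly the standard proofs of these three inequalities, and your handling of the degenerate cases where some $\bfrak(\Rrm_i)$ is undefined is adequate. The paper itself states this as a Fact cited from the literature and gives no proof, so there is nothing to compare against beyond noting that your write-up is the expected argument and contains no gaps.
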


We use the composition of relational systems to prove~\autoref{dRbup}.

\begin{definition}[{\cite[Sec.~4]{blass}}]\label{compTK}
Let $\Rrm_e=\la X_e,Y_e,\sqsubset_e\ra$ be a relational system for $e\in\{0,1\}$. 
The \emph{composition of $\Rrm_0$ with $\Rrm_1$} is defined by $(\Rrm_0;\Rrm_1):=\la X_0\times X_1^{Y_0}, Y_0\times Y_1, \sqsubset_* \ra$ where
\[(x,f)\sqsubset_*(y,b)\text{ iff }x \sqsubset_0 y\text{ and }f(y) \sqsubset_1 b.\]
\end{definition}

\begin{fct}\label{ex:compleqT}
Let $\Rrm_i$ be a relational system for $i<3$. If $\Rrm_0\leqT\Rrm_1$, then $\Rrm_0\leqT \Rrm_1\times \Rrm_2 \leqT (\Rrm_1,\Rrm_2)$ and 
$\Rrm_1\times \Rrm_2
\eqT \Rrm_2\times \Rrm_1$.
\end{fct}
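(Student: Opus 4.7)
The plan is to exhibit explicit Tukey connections for each of the three assertions. Recall that $\Rrm_0 \leqT \Rrm_1$ is witnessed by a pair of maps $\psi_- \colon X_0 \to X_1$ and $\psi_+ \colon Y_1 \to Y_0$ such that $\psi_-(x) \sqsubset_1 y$ implies $x \sqsubset_0 \psi_+(y)$ for all $x \in X_0$ and $y \in Y_1$. Each item will be proved by writing down a suitable pair of maps and unfolding the definitions of $\sqsubset^\times$ from \autoref{def:prodrel} and of $\sqsubset_*$ from \autoref{compTK}.

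For $\Rrm_0 \leqT \Rrm_1 \times \Rrm_2$, I fix a witness $(\psi_-, \psi_+)$ for the hypothesis $\Rrm_0 \leqT \Rrm_1$ and an arbitrary $x_2^* \in X_2$ (which exists because $X_2$ is non-empty by the definition of a relational system). The maps $\Phi_-(x) := (\psi_-(x), x_2^*)$ and $\Phi_+(y_1, y_2) := \psi_+(y_1)$ do the job: if $\Phi_-(x) \sqsubset^\times (y_1, y_2)$ then, reading off the first coordinate, $\psi_-(x) \sqsubset_1 y_1$, and hence $x \sqsubset_0 \psi_+(y_1) = \Phi_+(y_1, y_2)$. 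The second coordinate of $\Phi_-$ acts only as a placeholder, which is why the auxiliary choice of $x_2^*$ is harmless.

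For $\Rrm_1 \times \Rrm_2 \leqT (\Rrm_1; \Rrm_2)$ the idea is a constantification trick: define $\Phi_- \colon X_1 \times X_2 \to X_1 \times X_2^{Y_1}$ by $\Phi_-(x_1, x_2) := (x_1, c_{x_2})$, where $c_{x_2}$ is the constant function $Y_1 \to X_2$ with value $x_2$, and take $\Phi_+$ to be the identity on $Y_1 \times Y_2$. If $\Phi_-(x_1, x_2) \sqsubset_* (y_1, y_2)$, then by the definition of $\sqsubset_*$ one gets $x_1 \sqsubset_1 y_1$ together with $c_{x_2}(y_1) = x_2 \sqsubset_2 y_2$, which is exactly $(x_1, x_2) \sqsubset^\times (y_1, y_2)$, as required. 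Finally, the commutativity $\Rrm_1 \times \Rrm_2 \eqT \Rrm_2 \times \Rrm_1$ is witnessed in both directions by the coordinate-swap maps $(x_1, x_2) \mapsto (x_2, x_1)$ and $(y_2, y_1) \mapsto (y_1, y_2)$, where the Tukey condition reduces to the trivial observation that $\sqsubset^\times$ is defined coordinatewise.

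There is no real obstacle here; the whole Fact is a bookkeeping exercise in unwinding definitions, and the only mild piece of content is the constantification trick used in the middle item. Since the statement is attributed to \cite{CarMej23}, I would present the argument in compact form, essentially recording the three pairs of maps above and noting that the defining implication of each Tukey connection follows coordinatewise.
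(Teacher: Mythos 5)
Your proof is correct; the paper states this as a Fact without giving any proof, and your three explicit Tukey connections (projection with a dummy second coordinate, the constant-function embedding into the composition, and coordinate swaps) are exactly the routine verification the authors are taking for granted. The only cosmetic point is that the statement writes the composition as $(\Rrm_1,\Rrm_2)$ while \autoref{compTK} uses the semicolon notation $(\Rrm_1;\Rrm_2)$; you correctly read these as the same object.
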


The following theorem describes the effect of the composition on cardinal invariants.

\begin{theorem}[{\cite[Thm.~4.10]{blass}}]\label{blascomp}
Let $\Rrm_e$ be a relational system for $e\in\{0,1\}$. Then $\bfrak(\Rrm_0;\Rrm_1) =\min\{\bfrak(\Rrm_0),\bfrak(\Rrm_1)\}$ and $\dfrak(\Rrm_0;\Rrm_1)=\dfrak(\Rrm_0)\cdot\dfrak(\Rrm_1)$. 
\end{theorem}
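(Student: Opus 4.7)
\textbf{Proof plan for \autoref{blascomp}.}

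I will establish both equalities by showing the two separate inequalities for each. The whole proof proceeds by constructing witnesses directly from the definition of $(\Rrm_0;\Rrm_1)$, with the key trick being the use of constant functions $Y_0 \to X_1$ to encode $X_1$-data into the second coordinate of the composition.

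\textbf{Step 1 ($\bfrak(\Rrm_0;\Rrm_1) \leq \min\{\bfrak(\Rrm_0),\bfrak(\Rrm_1)\}$).} First, pick any $f_0 \in X_1^{Y_0}$ and any $x_0 \in X_0$. If $F_0 \subseteq X_0$ is $\Rrm_0$-unbounded of size $\bfrak(\Rrm_0)$, then $\{(x, f_0) : x \in F_0\}$ must be $\sqsubset_*$-unbounded, since any $\sqsubset_*$-bound $(y,b)$ would give an $\Rrm_0$-bound $y$ for $F_0$. Second, if $F_1 \subseteq X_1$ is $\Rrm_1$-unbounded of size $\bfrak(\Rrm_1)$, then for each $v \in F_1$ let $f_v \colon Y_0 \to X_1$ be the constant function with value $v$; the family $\{(x_0, f_v) : v \in F_1\}$ is $\sqsubset_*$-unbounded, since any bound $(y,b)$ would force $f_v(y) = v \sqsubset_1 b$ for every $v \in F_1$.

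\textbf{Step 2 ($\bfrak(\Rrm_0;\Rrm_1) \geq \min\{\bfrak(\Rrm_0),\bfrak(\Rrm_1)\}$).} Let $F \subseteq X_0 \times X_1^{Y_0}$ with $|F| < \min\{\bfrak(\Rrm_0),\bfrak(\Rrm_1)\}$. The projection $F_0 := \{x : \exists f\ (x,f)\in F\}$ has size $<\bfrak(\Rrm_0)$, so some $y \in Y_0$ $\Rrm_0$-bounds it. Then $\{f(y) : (x,f)\in F\}$ has size $<\bfrak(\Rrm_1)$ and is $\Rrm_1$-bounded by some $b \in Y_1$. The pair $(y,b)$ then $\sqsubset_*$-bounds $F$.

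\textbf{Step 3 ($\dfrak(\Rrm_0;\Rrm_1) \leq \dfrak(\Rrm_0)\cdot\dfrak(\Rrm_1)$).} Let $D_0 \subseteq Y_0$ be $\Rrm_0$-dominating with $|D_0| = \dfrak(\Rrm_0)$, and $D_1 \subseteq Y_1$ be $\Rrm_1$-dominating with $|D_1| = \dfrak(\Rrm_1)$. Given $(x,f)$, first pick $y \in D_0$ with $x \sqsubset_0 y$, then pick $b \in D_1$ with $f(y) \sqsubset_1 b$; so $D_0 \times D_1$ is $\sqsubset_*$-dominating.

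\textbf{Step 4 ($\dfrak(\Rrm_0;\Rrm_1) \geq \dfrak(\Rrm_0)\cdot\dfrak(\Rrm_1)$).} This is the main point. Let $D \subseteq Y_0\times Y_1$ be $\sqsubset_*$-dominating. Fix any $f_0 \in X_1^{Y_0}$: for each $x \in X_0$ the pair $(x,f_0)$ is $\sqsubset_*$-dominated by some $(y,b)\in D$, giving $x \sqsubset_0 y$; hence $\{y : \exists b\ (y,b)\in D\}$ is $\Rrm_0$-dominating and $|D| \geq \dfrak(\Rrm_0)$. Dually, for each $v \in X_1$ form the constant function $f_v$ and fix any $x_0\in X_0$; then $(x_0,f_v)$ is dominated by some $(y,b)\in D$, so $f_v(y) = v \sqsubset_1 b$. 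Thus $\{b : \exists y\ (y,b)\in D\}$ is $\Rrm_1$-dominating, so $|D|\geq \dfrak(\Rrm_1)$. Combining, $|D| \geq \max\{\dfrak(\Rrm_0),\dfrak(\Rrm_1)\} = \dfrak(\Rrm_0)\cdot\dfrak(\Rrm_1)$ in the relevant (infinite) cases; the degenerate case where one side equals $1$ (i.e.\ some $X_e$ is $\Rrm_e$-bounded) is handled directly by noting that Step 3 gives a dominating family of size equal to the other factor.

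\textbf{Main obstacle.} The only nontrivial step is Step 4, and specifically the inequality $|D| \geq \dfrak(\Rrm_1)$. This is where the constant-function encoding $v \mapsto f_v$ is essential: without it, one only sees $\Rrm_1$-information through the outputs $f(y)$ of the particular functions $f$ in $F$, and there is no reason a dominating $D$ should project to an $\Rrm_1$-dominating set in $Y_1$.
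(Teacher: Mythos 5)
The paper does not prove this statement at all---it is imported verbatim from Blass~\cite[Thm.~4.10]{blass}---so there is no in-paper argument to compare against; your proof is the standard one and is correct, with the constant-function trick in Steps~1 and~4 being exactly the right device for extracting $\Rrm_1$-information. The one caveat is in Step~4: establishing $|D|\geq\max\{\dfrak(\Rrm_0),\dfrak(\Rrm_1)\}$ gives the claimed product $\dfrak(\Rrm_0)\cdot\dfrak(\Rrm_1)$ only when at least one factor is infinite or equal to $1$; if both were finite and $\geq 2$ the product exceeds the maximum and a genuine counting argument over pairs would be needed. Since every relational system to which the theorem is applied in this paper has its $\dfrak$ equal to $1$ or infinite, this gap is harmless here, but it is worth flagging if you intend the proof to cover the literal statement for arbitrary relational systems.
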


Instead of dealing with all meager sets, we will consider a suitably chosen cofinal family below.

\begin{definition}
Let $I\in\Ior$ and let $x\in\cantor$. Define 
\[B_{x,I}:=\set{y\in\cantor}{\forall^\infty n\in\omega\colon y{\upharpoonright}I_n\neq x{\upharpoonright}I_n}.\]
For $n\in\omega$, define
\[B_{x,I}^n:=\set{y\in\cantor}{\forall m\geq n\colon x{\upharpoonright}I_m\neq y{\upharpoonright}I_m}.\]
Then $B_{x,I}^m\subseteq B_{x,I}^n$ whenever $m < n <\omega$. Thus, $B_{x,I}=\bigcup_{n\in\omega}B_{x,I}^n$. 

Denote by $B_{I}$ the set $B_{0,I}=\set{y\in\cantor}{\forall^\infty n\in\omega\colon y{\upharpoonright}I_n\neq0)}$.    
\end{definition}

A pair $(x,I)\in 2^\omega\times \Ior$ is known as a \emph{chopped real}, and these are used to produce a cofinal family of meager sets.
It is clear that $B_{x,I}$ is a meager subset 
of $\cantor$ (see, e.g.~\cite{blass}).

\begin{theorem}[{Talagrand~\cite{Tal98}}, see also e.g.~{\cite[Prop.~13]{BWS}}]\label{ChM}
For each meager set $F\subseteq\cantor$ and $I\in\Ior$ there are $x\in\cantor$ and $I'\in\Ior$ such that $F\subseteq B_{I',x}$ and each $I'_n$ is the union of finitely many $I_k$'s.
\end{theorem}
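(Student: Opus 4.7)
I would establish \autoref{ChM} using the standard fact that a meager subset of $\cantor$ is a countable increasing union of closed nowhere dense sets. Write $F=\bigcup_{n\in\omega}F_n$ with $F_0\subseteq F_1\subseteq\cdots$ all closed and nowhere dense. The plan is to build inductively a strictly increasing sequence $0=k_0<k_1<\cdots$ and a real $x\in\cantor$ such that, setting $I'_n:=\bigcup_{k_n\leq k<k_{n+1}}I_k$, one has $y\frestr I'_n\neq x\frestr I'_n$ for every $y\in F_n$. Once this is achieved, any $y\in F$ lies in some $F_{n_0}$, and monotonicity gives $y\in F_n$ for all $n\geq n_0$, so $y\frestr I'_n\neq x\frestr I'_n$ for all $n\geq n_0$; that is, $y\in B_{x,I'}$. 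By construction each $I'_n$ is a union of finitely many $I_k$, as required.

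For the inductive step, suppose $k_n$ is given and let $l:=\min I_{k_n}$. The key observation is that for every $\sigma\in 2^l$ the fiber $F_n^\sigma:=\set{z\in\cantor}{\text{the real obtained by prepending }\sigma\text{ to }z\text{ lies in }F_n}$ is a closed nowhere dense (possibly empty) subset of $\cantor$, because $[\sigma]\cap F_n$ is nowhere dense in the clopen copy of $\cantor$ given by $[\sigma]$. Since $2^l$ is finite, $\bigcup_{\sigma\in 2^l}F_n^\sigma$ is again closed nowhere dense, so its complement contains some basic open set $[\tau]$ with $\tau\in 2^{<\omega}$. Choosing $k_{n+1}>k_n$ large enough that $\min I_{k_{n+1}}\geq l+|\tau|$, I would then extend $\tau$ to some $\tau'\in 2^{[l,\min I_{k_{n+1}})}$ and set $x\frestr I'_n:=\tau'$; this forces $y\frestr I'_n\neq x\frestr I'_n$ for every $y\in F_n$, since any such $y$ satisfies $y\frestr[l,\omega)\in F_n^{\,y\frestr l}$ and so cannot begin with $\tau$ on the coordinates $[l,l+|\tau|)$.

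The only delicate point---and what I expect to be the main obstacle---is reconciling the coarsening requirement (each $I'_n$ must be a union of original $I_k$) with the avoidance requirement. This is handled precisely by the freedom to extend the avoiding string $\tau$: once we have a finite $\tau$ whose cylinder $[\tau]$ misses all the fibers $F_n^\sigma$, any extension of $\tau$ retains this property, so the right endpoint of $I'_n$ can be aligned with $\min I_{k_{n+1}}$ for any sufficiently large $k_{n+1}$. The remaining verification is routine bookkeeping.
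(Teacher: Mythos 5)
Your argument is correct and is exactly the standard proof of this classical fact (the paper itself only cites Talagrand and \cite[Prop.~13]{BWS} and gives no proof to compare against): decompose into an increasing sequence of closed nowhere dense sets, and on each block $I'_n=\bigcup_{k_n\le k<k_{n+1}}I_k$ use the finitely many fibers over $2^{\min I_{k_n}}$ to find a string that $x$ can follow and that every $y\in F_n$ must avoid. The only cosmetic point is that a meager $F$ need not \emph{equal} an increasing union of closed nowhere dense sets, only be \emph{contained} in one, which suffices since the conclusion is just $F\subseteq B_{I',x}$.
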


\begin{lemma}[{\cite[Prop~9]{BWS}}]\label{baseM}
For $x, y\in\cantor$ and for $I, J\in\Ior$, the following statements are equivalent:
\begin{enumerate}[label=\normalfont(\arabic*)]
    \item $B_{I,x}\subseteq B_{J,y}$.
    \item $\forall^\infty n<\omega\,\exists k<\omega\colon I_k\subseteq J_n$ and $x{\upharpoonright}I_k=y{\upharpoonright}I_k$.
\end{enumerate}
\end{lemma}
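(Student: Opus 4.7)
The plan is to prove both implications, with $(2) \Rightarrow (1)$ being the easy direction and $(1) \Rightarrow (2)$ requiring an explicit construction via the contrapositive.

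For $(2) \Rightarrow (1)$, I would take $z \in B_{I,x}$ and, using (2), pick for each sufficiently large $n$ a witness $k_n$ with $I_{k_n} \subseteq J_n$ and $x{\upharpoonright}I_{k_n} = y{\upharpoonright}I_{k_n}$. Since $I_{k_n} \subseteq J_n$ forces $\min I_{k_n} \geq \min J_n$, one has $k_n \to \infty$, so eventual disagreement of $z$ with $x$ on the $I_k$'s yields $z{\upharpoonright}I_{k_n} \neq x{\upharpoonright}I_{k_n} = y{\upharpoonright}I_{k_n}$ for cofinitely many $n$, whence $z{\upharpoonright}J_n \neq y{\upharpoonright}J_n$ and $z \in B_{J,y}$.

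For $(1) \Rightarrow (2)$ I would argue by contrapositive. Assume (2) fails, so the set $N$ of $n < \omega$ for which every $k$ with $I_k \subseteq J_n$ satisfies $x{\upharpoonright}I_k \neq y{\upharpoonright}I_k$ is infinite. Since each $J_n$ meets only finitely many $I_k$'s, I can thin $N$ to an infinite $N' = \{n_l : l < \omega\}$ so sparse that no single $I_k$ intersects two distinct $J_{n_l}$'s. Define $z \in \cantor$ by setting $z{\upharpoonright}J_{n_l} := y{\upharpoonright}J_{n_l}$ for every $l$, and on $\omega \setminus \bigcup_l J_{n_l}$ choose $z$ so that $z{\upharpoonright}I_k \neq x{\upharpoonright}I_k$ for every $I_k$. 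Each $I_k$ falls in one of three cases:
\begin{itemize}
    \item $I_k \subseteq J_{n_l}$ for some $l$: then $z{\upharpoonright}I_k = y{\upharpoonright}I_k$, and by the defining property of $N$ this already differs from $x{\upharpoonright}I_k$;
    \item $I_k$ meets some $J_{n_l}$ but is not contained in it: by the sparsity, $I_k$ has coordinates outside $\bigcup_l J_{n_l}$ where one can freely place a disagreement with $x$;
    \item $I_k$ avoids $\bigcup_l J_{n_l}$ entirely: flip one bit of $z$ on $I_k$ away from $x$.
\end{itemize}
Then $z \in B_{I,x} \setminus B_{J,y}$, refuting (1).

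The main obstacle I anticipate is the consistency of this construction across the three cases: the first case is resolvable only because of how $N$ was chosen, while the second case requires the sparse thinning to leave coordinates of $I_k$ outside every $J_{n_l}$ where $z$ is still free. Both points rely on $I_k$ and $J_n$ being finite intervals, so the bookkeeping, while delicate, becomes routine once the spacing of $N'$ is specified correctly.
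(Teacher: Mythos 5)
The paper does not prove this lemma; it is quoted directly from \cite[Prop.~9]{BWS}, so there is no in-paper argument to compare against. Your proof is correct and is essentially the standard one: the $(2)\Rightarrow(1)$ direction via $k_n\to\infty$ is fine, and in $(1)\Rightarrow(2)$ the sparse thinning of $N$ does resolve the only delicate point --- an $I_k$ meeting exactly one $J_{n_l}$ without being contained in it must stick out into some $J_m$ with $m\notin N'$, leaving a free coordinate to force $z{\upharpoonright}I_k\neq x{\upharpoonright}I_k$, and since the $I_k$ are pairwise disjoint these choices never conflict.
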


\begin{definition}
Given a sequence $b=\seq{b(n)}{n\in \omega}$ of non-empty sets, denote 
\[\Seq_{<\omega} b:=\bigcup_{n<\omega}\prod_{i<n}b(i).\] 
For each $\sigma\in\Seq_{<\omega}(b)$ define
\[[s]:=[s]_b:=\set{x\in\prod b}{ s\subseteq x}.\]    
\end{definition}

As a topological space, $\prod b$ has the product topology with each $b(n)$ endowed with the discrete topology. Note that $\set{[s]_b}{s\in \Seqw b}$ forms a base of clopen sets for this topology. When each $b(n)$ is countable we have that $\prod b$ is a Polish space and, in addition, if $|b(n)|\geq 2$ for infinitely many $n$, then $\prod b$ is a perfect Polish space. The most relevant instances are:
\begin{itemize}
   \item The Cantor space $2^\omega$, when $b(n)=2$ for all $n$, and 
   \item The Baire space $\omega^\omega$, when $b(n)=\omega$ for all $n$.
\end{itemize}
We now review the Lebesgue measure on $\prod b$ when each $b(n)\leq\omega$ is an ordinal. For any ordinal $0<\eta\leq\omega$, the probability measure $\mu_\eta$ on the power set of $\eta$ is defined by:
\begin{itemize}
\item when $\eta=n<\omega$, $\mu_{n}$ is the measure such that, for all $k<n$,
$\mu_{n}(\{k\})=\frac{1}{n}$, and 
\item when $\eta=\omega$, $\mu_{\omega}$ is the measure such that, for  $k<\omega$,
$\mu_{\omega}(\{k\})=\frac{1}{2^{k+1}}$.
\end{itemize}
Denote by $\Lb_b$ the product measure of $\la \mu_{b(n)}:\, n<\omega\ra$, which we call \emph{the Lebesgue Measure on $\prod b$}, so $\Lb_b$ is a  probability measure on the Borel $\sigma$-algebra of $\prod b$. More concretely, $\Lb_{b}$ 
is the unique measure on the Borel $\sigma$-algebra such that, for any $s\in \Seqw b$, $\Lb_b([s])=\prod_{i<|s|}\mu_{b(i)}(\{s(i)\})$. In particular, denote by $\Lb$, $\Lb_{2}$ and $\Lb_{\omega}$ the Lebesgue measure on $\R$, on $2^\omega$, and on $\omega^\omega$, respectively. 

Let $X$ be a topological space. Denote by $\Mwf(X)$ the collection of all meager subsets of $X$, and let $\Mwf:=\Mwf(\R)$.  If $X$ is a perfect Polish space, then $\Mwf(X)\eqT \Mwf(\R)$ and $\Crm_{\Mwf(X)}\eqT \Crm_{\Mwf(\R)}$ (see~\cite[Ex.~8.32 \&~Thm.~15.10]{Ke2}). Therefore, the cardinal invariants associated with the meager ideal are independent of the perfect Polish space used to calculate it. When the space is clear from the context, we write $\Mwf$ for the meager ideal.

On the other hand, denote by $\Bwf(X)$ the $\sigma$-algebra of Borel subsets of $X$, and assume that $\mu\colon\Bwf(X)\to [0,\infty]$ is a $\sigma$-finite measure such that $\mu(X)>0$ and every singleton has measure zero.
    Denote by $\Nwf(\mu)$ the ideal generated by the $\mu$-measure zero sets, which is also denoted by $\Nwf(X)$ when the measure on $X$ is clear.
    Then $\Nwf(\mu)\eqT \Nwf(\Lb)$ and $\Crm_{\Nwf(\mu)}\eqT \Crm_{\Nwf(\Lb)}$ where $\Lb$ is the Lebesgue measure on $\R$ (see~\cite[Thm.~17.41]{Ke2}). Therefore, the four cardinal invariants associated with both measure zero ideals are the same. When $b=\la b(n):\, n<\omega\ra$, each $b(n)\leq \omega$ is a non-zero ordinal, and $\prod b$ is perfect, we have that $\Lb_b$ satisfies the properties of $\mu$ above. When the measure space is understood, we just write $\Nwf$ for the null ideal.

\begin{definition}
    For $b$ as above, denote by $\Ewf(\prod b)$ the ideal generated by the $F_\sigma$ measure zero subsets of $\prod b$. Likewise, define $\Ewf(\R)$ and $\Ewf([0,1])$.
When $\prod b$ is perfect, the map $F_b\colon \prod b\to[0,1]$ defined by
\[F_b(x):=\sum_{n<\omega}\frac{x(n)}{\prod_{i\leq n}b(i)}\]
is a continuous onto function, and it preserves measure. Hence, this map preserves sets between $\Ewf(\prod b)$ and $\Ewf([0,1])$ via images and pre-images. Therefore, $\Ewf(\prod b)\eqT \Ewf([0,1])$ and $\Crm_{\Ewf(\prod b)} \eqT \Crm_{\Ewf([0,1])}$. We also have $\Ewf(\R)\eqT \Ewf([0,1])$ and $\Crm_{\R} \eqT \Crm_{\Ewf([0,1])}$, as well as $\Ewf(\omega^\omega)\eqT \Ewf(2^\omega)$ and $\Crm_{\Ewf(\omega^\omega)}\eqT \Crm_{\Ewf(2^\omega)}$.

When the space is clear, we write $\Ewf$.
Therefore, the cardinal invariants of $\Ewf$ do not depend on the previous spaces.
\end{definition}

\section{ZFC results}\label{sec:zfc}

This section aims to display the new arrows that appear in Cichon’s diagram. All of the contents in this section are taken almost verbatim from~\cite[Sec.~2]{CMR2}.

 \begin{lemma}\label{lem:b0}
 $\Crm_\Mwf\leqT\Rrm_b$ whenever $b\geq^* 2$. In particular, $\bfrak_b^\eq\leq\non(\Mwf)$ and $\cov(\Mwf)\leq \dfrak_b^\eq$. 
\end{lemma}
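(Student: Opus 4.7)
The plan is to establish the Tukey reduction $\Crm_\Mwf\leqT\Rrm_b$ directly; the two cardinal inequalities then follow from the standard Tukey machinery, since $\bfrak(\Crm_\Mwf)=\non(\Mwf)$ and $\dfrak(\Crm_\Mwf)=\cov(\Mwf)$ by~\autoref{exm:Iwf}, while $\bfrak(\Rrm_b)=\bfrak^\eq_b$ and $\dfrak(\Rrm_b)=\dfrak^\eq_b$ by~\autoref{exa:a2}.

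Since $b\geq^* 2$, the space $\prod b$ is a perfect Polish space, so $\Crm_{\Mwf(\prod b)}\eqT\Crm_\Mwf$ and it is enough to Tukey-reduce $\Crm_{\Mwf(\prod b)}$ into $\Rrm_b$. I would take $\Phi\colon\prod b\to\prod b$ to be the identity and, for each $(I,h)\in\Ior\times\prod b$, set
\[\Psi(I,h):=\set{f\in\prod b}{f\sqrb(I,h)}.\]
The implication $\Phi(f)\sqrb(I,h)\Rightarrow f\in\Psi(I,h)$ is then immediate from the definitions, so the whole content of the reduction is the verification that $\Psi(I,h)\in\Mwf(\prod b)$.

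For this, I would use the natural $F_\sigma$ decomposition
\[\Psi(I,h)=\bigcup_{n_0<\omega}C_{n_0},\qquad C_{n_0}:=\set{f\in\prod b}{\forall n\geq n_0\ \exists k\in I_n\colon f(k)=h(k)},\]
where each $C_{n_0}$ is closed, being an intersection (over $n\geq n_0$) of the clopen sets $E_n:=\{f\colon \exists k\in I_n\ f(k)=h(k)\}$. The key step is the routine diagonalization showing that $C_{n_0}$ is nowhere dense: given $s\in\Seqw b$, pick $n\geq n_0$ so that $\min I_n>|s|$ and $b(k)\geq 2$ for every $k\in I_n$ (possible because $b\geq^* 2$), then extend $s$ to some $s'\in\Seqw b$ with $\dom(s')=\max I_n+1$ and $s'(k)\neq h(k)$ for each $k\in I_n$; any $f\in[s']$ fails the condition in $E_n$, so $[s']\subseteq[s]\setminus C_{n_0}$.

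The only subtle point — and the ``main obstacle'', if there is one — is handling the finitely many indices where $b(k)$ may fail to be $\geq 2$; these are absorbed automatically by taking $n$ large enough in the diagonalization, thanks to the $\forall^\infty$ quantifier in the definition of $\sqrb$. No measure-theoretic content is needed, as the argument is purely a category one in the product topology on $\prod b$.
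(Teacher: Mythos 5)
Your proof is correct and takes essentially the same route as the paper: the identity map on $\prod b$ paired with $(I,h)\mapsto\set{f\in\prod b}{f\sqrb(I,h)}$, together with the same $F_\sigma$ decomposition of that set into the closed pieces $\bigcap_{n\geq n_0}\bigcup_{k\in I_n}\set{f}{f(k)=h(k)}$. You in fact spell out the nowhere-density diagonalization and the role of the hypothesis $b\geq^*2$, which the paper's proof leaves implicit.
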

\begin{proof}
We work with $\Mwf(\prod b)$ instead of $\Mwf$ (see~\autoref{sec:intro}). Let $F\colon \prod b\to \prod b$ be the identity function and define $G\colon \Ior\times\prod b \rightarrow \Mwf\big(\prod b\big)$ as follows.
\begin{align*}
G\colon \Ior\times\prod b & \rightarrow \Mwf\big(\prod b\big) \\
        (J,h) & \mapsto  \set{x\in \prod b}{x \sqrb (J,h)}
\end{align*} 
Observe that $\set{x\in \prod b}{x \sqrb (J,h)}\in\Mwf(\prod b)$, since    
    \[\set{x\in \prod b}{x \sqrb (J,h)} = \bigcup_{m<\omega}\bigcap_{n\geq m}\bigcup_{k\in J_n}A^{h(k)}_k,\]
\text{ where\ }$A^{\ell}_k := \set{x\in \prod b}{x(k) = \ell}$ for $\ell<b(k)$,  and each $A^\ell_k$ is clopen. In fact, it is $F_\sigma$-set. It is clear that if $x \sqrb (J,h)$, then $x\in \set{x\in \prod b}{x \sqrb (J,h)}$.   
\end{proof}

We below present connections between $\Rrm_b$ and measure zero.

\begin{lemma}\label{lem:b4}
 Let $b\in\omega^\omega$.  
    \begin{enumerate}[label=\rm(\arabic*)]
        \item\label{lem:b4:1} If $\sum_{k<\omega}\frac{1}{b(k)}<\infty$ then $\Crm_\Ewf \leqT \Rrm_b$.  In particular, $\bfrak_b^\eq \leq \non(\Ewf)$ and $\cov(\Ewf)\leq \dfrak_b^\eq$. 
        \item\label{lem:b4:2} If $\sum_{k<\omega}\frac{1}{b(k)} = \infty$. Then $\Crm_\Nwf\leqT\Rrm_b^\perp$. As a consequence, $\cov(\Nwf)\leq\bfrak_b^\eq$ and $\dfrak_b^\eq\leq\non(\Nwf)$. 
    \end{enumerate}
\end{lemma}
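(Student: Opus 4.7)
I would prove the two parts in parallel, in both cases modifying the Tukey connection of \autoref{lem:b0} so as to land in the smaller ideal furnished by the growth hypothesis on $b$. Recall from the proof of \autoref{lem:b0} that, for each $k$ and $\ell<b(k)$, the cylinder $A^\ell_k := \{x\in\prod b : x(k)=\ell\}$ is clopen with $\Lb_b(A^\ell_k) = 1/b(k)$.

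For part~\ref{lem:b4:1}, I would reuse exactly the maps of \autoref{lem:b0}: let $F = \mathrm{id}_{\prod b}$ and $G(J,h) := \{x\in\prod b : x \sqrb (J,h)\}$. The Tukey implication is tautological, so the only new content is to verify $G(J,h)\in\Ewf(\prod b)$. The proof of \autoref{lem:b0} already gives that $G(J,h)$ is $F_\sigma$; what the new hypothesis provides is that it is also $\Lb_b$-null. Indeed, writing $B_n := \bigcup_{k\in J_n} A^{h(k)}_k$, we have $G(J,h) = \liminf_n B_n$ and $\Lb_b(B_n) \leq \sum_{k\in J_n} 1/b(k)$, so under $\sum_k 1/b(k)<\infty$ the tail $\sum_{k\geq\min J_n} 1/b(k) \to 0$ and Fatou yields $\Lb_b(G(J,h)) \leq \liminf_n \Lb_b(B_n) = 0$. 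Since $\Ewf(\prod b)\eqT\Ewf$, the Tukey reduction follows, hence $\bfrak_b^\eq\leq\non(\Ewf)$ and $\cov(\Ewf)\leq\dfrak_b^\eq$.

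For part~\ref{lem:b4:2} one needs the \emph{dual} reduction $\Crm_\Nwf\leqT\Rrm_b^\perp$, i.e.\ maps $F\colon\prod b \to \Ior\times\prod b$ and $G\colon\prod b \to \Nwf(\prod b)$ with the property that whenever $\neg\big(f\sqrb F(x)\big)$ then $x\in G(f)$. The idea is to exploit $\sum_k 1/b(k)=\infty$ to \emph{greedily} choose, once and for all, a partition $I^*\in\Ior$ with
\[\sum_{k\in I^*_n} \tfrac{1}{b(k)} \;\geq\; 2\log(n+2) \quad \text{for every } n\in\omega,\]
and then put $F(x) := (I^*,x)$ and
\[G(f) := \bigl\{x\in\prod b : \exists^\infty n\ \forall k\in I^*_n\colon x(k)\neq f(k)\bigr\}.\]
The Tukey implication is immediate by symmetry of $\neq$: $\neg(f\sqrb (I^*,x))$ unfolds to $\exists^\infty n\,\forall k\in I^*_n\colon f(k)\neq x(k)$, which is literally $x\in G(f)$. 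The substantive step is $G(f)\in\Nwf(\prod b)$. Writing $G(f)=\limsup_n E_n$ for $E_n := \{x:\forall k\in I^*_n\colon x(k)\neq f(k)\}$, the $E_n$ depend on pairwise disjoint coordinates and are therefore $\Lb_b$-independent, so
\[\Lb_b(E_n) \;=\; \prod_{k\in I^*_n}\!\Bigl(1-\tfrac{1}{b(k)}\Bigr) \;\leq\; \exp\!\Bigl(-\!\sum_{k\in I^*_n}\tfrac{1}{b(k)}\Bigr) \;\leq\; (n+2)^{-2},\]
which is summable. Borel--Cantelli then yields $\Lb_b(G(f))=0$, and the Tukey reduction delivers $\cov(\Nwf)\leq\bfrak_b^\eq$ and $\dfrak_b^\eq\leq\non(\Nwf)$.

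The only genuinely delicate step is the choice of $I^*$ in part~\ref{lem:b4:2}: if the blocks grow too slowly then $\Lb_b(E_n)$ is not summable (in fact, by the second Borel--Cantelli lemma it tends to $1$), and $G(f)$ may even have full measure; getting the logarithmic lower bound on the block sums is precisely what the hypothesis $\sum_k 1/b(k)=\infty$ makes possible, and what makes the dual reduction go through.
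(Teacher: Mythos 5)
Your proposal is correct and follows essentially the same route as the paper: part~\ref{lem:b4:1} reuses the maps from \autoref{lem:b0} and shows the $F_\sigma$ set $\{x : x\sqrb(J,h)\}$ is null using that the block sums $\sum_{k\in J_n}1/b(k)$ tend to $0$, and part~\ref{lem:b4:2} picks a partition with divergent block sums, sends $x\mapsto(J,x)$ and $h\mapsto \prod b\smallsetminus\{x : x\sqrb(J,h)\}$, and bounds the complement's measure via $\prod_{k\in J_n}(1-1/b(k))\leq e^{-\sum_{k\in J_n}1/b(k)}$. The only (immaterial) differences are cosmetic: Fatou in place of the paper's product bound, the threshold $2\log(n+2)$ in place of $n$, and an explicit appeal to Borel--Cantelli where the paper just sums the tail.
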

\begin{proof}
To prove~\ref{lem:b4:1}--\ref{lem:b4:2}, we work with $\Nwf(\prod b)$ instead of $\Nwf$ (see~\autoref{sec:intro}).

\ref{lem:b4:1}: Observe that \[\Lb_b([s]_b)=\Lb\bigg(\set{x\in\prod b}{ s\subseteq x}\bigg)=\prod_{i<|s|}\frac{1}{|b(i)|}\]
for any $s\in\Seq_{<\omega} b$. Let $F$ and $G$ be as in~\autoref{lem:b0}.  To complete the proof it suffices to prove  \[\Lb_b(\set{x\in \prod b}{x \sqrb (J,h)} )=0.\]

Recall
\[\set{x\in \prod b}{x \sqrb (J,h)} = \bigcup_{m<\omega}\bigcap_{n\geq m}\bigcup_{k\in J_n}A^{h(k)}_k.\]

Notice that $\Lb_b(A^\ell_k) = \frac{1}{b(k)}$, so we obtain
    \[\Lb_b\bigg(\set{x\in \prod b}{x \sqrb (J,h)}\bigg) \leq \lim_{m\to \infty}\prod_{n\geq m}\sum_{k\in J_n}\frac{1}{b(k)}.\]\
This limit above is $0$ because $\sum_{k<\omega}\frac{1}{b(k)}<\infty$.

\ref{lem:b4:2}: Since $\sum_{k<\omega}\frac{1}{b(k)} = \infty$, find $J\in\Ior$ such that $\sum_{k\in J_n}\frac{1}{b(k)} \geq n$ for all $n<\omega$. Observe that 
    \[\prod b\menos \set{x\in \prod b}{x \sqrb (J,h)} = \bigcap_{m<\omega}\bigcup_{n\geq m}\bigcap_{k\in J_n}\left(\prod b\menos A^{h(k)}_k\right),\]
    so
    \begin{align*}
        \Lb_b\left(\prod b\menos \set{x\in \prod b}{x \sqrb (J,h)}\right)& \leq \lim_{m\to\infty}\sum_{n\geq m}\prod_{k\in J_n}\left(1 - \frac{1}{b(k)}\right)\\
        &\leq \lim_{m\to\infty}\sum_{n\geq m} e^{-\sum_{k\in J_n}\frac{1}{b(k)}}.
    \end{align*}
Since $\sum_{k\in J_n}\frac{1}{b(k)} \geq n$, $\Lb_b\left(\prod b\menos \set{x\in \prod b}{x \sqrb (J,h)}\right)=0$.
 
Now we define the functions $\Psi_-$ and $\Psi_+$ as follows.   
 \begin{align*}
\Psi_-  & \colon \prod b  
                \longrightarrow \Ior\times\prod b \text{ and} \notag\\
\Psi_+ & \colon \prod b \longmapsto\Nwf(\prod b) \notag
\intertext{for each $h\in\prod b$ and $x\in\prod b$ by the assignments}
\Psi_-    & \colon x\longmapsto \big(J,x\big) \\
\Psi_+  & \colon h\longmapsto \prod b\smallsetminus \set{x\in \prod b}{x \sqrb (J,h)}
\end{align*}
It is not hard to see that for any $x\in\prod b$ and for any $h\in\prod b$, if $h\not\sqrb(J,x)$ then $x\in \prod b\smallsetminus \set{x\in \prod b}{x \sqrb (J,h)}$.  
\end{proof}

We introduce the following relational system for combinatorial purposes.

\begin{definition}\label{def:Ed}
  Let $b:=\seq{ b(n)}{ n<\omega}$ be a sequence of non-empty sets. Define the relational system $\Ed_b:=\la\prod b,\prod b,\neq^\infty\ra$ where $x=^\infty y$ means $x(n)=y(n)$ for infinitely many $n$. The relation $x\neq^\infty y$ means that \emph{$x$ and $y$ are eventually different}. Denote $\balc_{b,1}:=\bfrak(\Ed_b)$ and $\dalc_{b,1}:=\dfrak(\Ed_b)$.
\end{definition}

Recall the following characterization of the cardinal invariants associated with $\Mwf$. The one for $\add(\Mwf)$ is due to Miller~\cite{Miller}.

\begin{theorem}[{\cite[Sec.~3.3]{CM}}]\label{charM2}
    \[\add(\Mwf) = \min(\{\bfrak\}\cup\set{\dalc_{b,1}}{b\in \omega^\omega}) \text{ and } \cof(\Mwf) = \sup(\{\dfrak\}\cup\set{\balc_{b,1}}{b\in \omega^\omega})\]
\end{theorem}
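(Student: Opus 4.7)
My plan is to prove the formula for $\add(\Mwf)$; the formula for $\cof(\Mwf)$ follows by a completely parallel dual argument. The strategy mirrors Miller's classical proof of $\add(\Mwf)=\min\{\bfrak,\cov(\Mwf)\}$, combined with a Bartoszy\'nski-style analysis of meager sets via chopped reals and the eventually-different relation. I would split the proof into matching inequalities.

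\emph{Upper bounds.} To show $\add(\Mwf)\leq\dalc_{b,1}$ for each $b\in\omega^\omega$ with $b\geq^*2$, take an $\Ed_b$-dominating family $D\subseteq\prod b$ of size $\dalc_{b,1}$. The sets $A_y:=\{x\in\prod b:x\neq^{\infty}y\}$ for $y\in D$ are each $F_\sigma$-meager in $\prod b$ (each tail-disagreement set $\bigcap_{n\geq m}\{x:x(n)\neq y(n)\}$ is closed nowhere dense by a density extension), and $\bigcup_{y\in D}A_y=\prod b$ by the dominating property. Since the cardinal invariants of $\Mwf$ are independent of the underlying perfect Polish space, $\add(\Mwf)\leq|D|=\dalc_{b,1}$. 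The inequality $\add(\Mwf)\leq\bfrak$ is classical.

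\emph{Lower bound.} Fix $\kappa<\bfrak$ and $\kappa<\dalc_{b^*,1}$ for some $b^*\in\omega^\omega$, and let $\{M_\alpha:\alpha<\kappa\}$ be meager. By \autoref{ChM}, cover each $M_\alpha$ by a chopped real set $B_{x_\alpha,I_\alpha}$. Since $\kappa<\bfrak=\bfrak(\Ior)$, pick a single $I\in\Ior$ $\sqsubseteq$-dominating all $I_\alpha$, coarse enough that $b_I(n):=2^{|I_n|}$ eventually exceeds $b^*$; \autoref{baseM} then supplies $y_\alpha$ with $M_\alpha\subseteq B_{y_\alpha,I}$. Identifying $2^\omega$ with $\prod b_I$ via $x\mapsto\tilde x=(x\upharpoonright I_n)_{n<\omega}$, each $B_{y_\alpha,I}$ becomes the $\Ed_{b_I}$-meager set of functions eventually different from $\tilde y_\alpha$. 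With a careful matching between $b^*$ and $b_I$, the hypothesis $\kappa<\dalc_{b^*,1}$ yields a function $\tilde z\in\prod b_I$ which agrees infinitely often with every $\tilde y_\alpha$. Finally, a second use of $\kappa<\bfrak$ applied to the ``next agreement'' functions $g_\alpha(n):=\min\{k\geq n:\tilde z(k)=\tilde y_\alpha(k)\}$ produces a coarsening $J$ of $I$ such that every sufficiently late $J$-block contains an $I$-block on which $z$ and $y_\alpha$ agree, uniformly in $\alpha$. By \autoref{baseM} again, $\bigcup_\alpha M_\alpha\subseteq B_{z,J}$, hence the union is meager.

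\emph{Main obstacle.} The delicate step is the final block-alignment: from a single $\tilde z$ witnessing infinite-often agreement for every $\alpha$, one must produce one partition $J$ whose blocks uniformly capture these agreements across all $\kappa$ indices. The second invocation of $\bfrak$ via the functions $g_\alpha$ is exactly what makes this possible, since a priori the agreement indices for different $\alpha$'s could be very sparse and badly interleaved. This double use of $\bfrak$ — first to align the $I_\alpha$'s onto $I$, then to align the agreement indices into $J$-blocks — reflects the composition-theorem structure (cf.\ \autoref{blascomp}) $\Mwf\eqT(\Ior;\Ed_{b_I}\text{-dual})$ that implicitly underlies the two-term $\min$ on the right-hand side of the statement, and taking $b^*$ ranging over $\omega^\omega$ produces the displayed formula since the minimum over $b$ of $\dalc_{b,1}$ recovers $\cov(\Mwf)$.
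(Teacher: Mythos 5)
The paper itself gives no proof of this theorem---it is quoted from \cite{CM} and attributed to Miller---so your proposal can only be judged on its own terms. Its architecture is the standard one: Talagrand's cofinal family of chopped-real sets, a first application of $\bfrak$ (via $\Ior\eqT\omega^\omega$) to merge the partitions $I_\alpha$ into a single $I$, the translation of $B_{y,I}$ into the eventually-different relation on $\prod b_I$ with $b_I(n)=2^{|I_n|}$, and a second application of $\bfrak$ to the ``next agreement'' functions $g_\alpha$ to package the agreement indices into a coarser partition $J$. The upper bounds and the final block-alignment step are correct.

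There is, however, a genuine gap in how you set up the lower bound. To prove $\min(\{\bfrak\}\cup\set{\dalc_{b,1}}{b\in\omega^\omega})\le\add(\Mwf)$ you must take $\kappa$ below \emph{every} $\dalc_{b,1}$, after which you may simply apply the hypothesis to the particular $b_I$ that the argument produces. Instead you fix a single $b^*$ with $\kappa<\dalc_{b^*,1}$ and try to transfer this to $b_I$ by choosing $I$ so coarse that $b_I\ge^* b^*$. That transfer goes the wrong way: $\dalc_{b,1}=\dfrak(\Ed_b)$ is \emph{antitone} in $b$, since a larger $b(n)$ leaves more room to be eventually different and so makes $\Ed_b$-dominating families easier to find (e.g.\ $\dalc_{b,1}=\cfrak$ when $b\equiv 2$, because $\set{x}{x\neq^\infty y}$ is then countable, while $\dalc_{b,1}$ can be much smaller for fast-growing $b$). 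Hence $b^*\le^* b_I$ only yields $\dalc_{b_I,1}\le\dalc_{b^*,1}$, and $\kappa<\dalc_{b^*,1}$ gives no control over $\dalc_{b_I,1}$; no ``careful matching'' can repair this, because $I$ must be coarse enough to absorb arbitrary $I_\alpha$'s and so $b_I$ cannot be kept below $b^*$. Indeed, the statement your set-up would prove, $\add(\Mwf)\ge\min\{\bfrak,\sup_b\dalc_{b,1}\}$, is false: since $\dalc_{2,1}=\cfrak$ it would give $\add(\Mwf)\ge\bfrak$, contradicting $\add(\Mwf)=\min\{\bfrak,\cov(\Mwf)\}$ in any model with $\cov(\Mwf)<\bfrak$. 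The fix is simply to quantify over all $b$, as the statement of the theorem demands. A smaller point: your closing assertion that $\min_b\dalc_{b,1}$ ``recovers'' $\cov(\Mwf)$ is unjustified---your own argument yields only $\cov(\Mwf)\le\dalc_{b,1}$---and it is not needed anywhere in the proof.
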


Following,  we are establishing a connection between $\Rrm_b$ and $(\Ed_b^\perp,\Ior)$.

\begin{lemma}\label{dRbup}
    For $b\in\omega^\omega$, $\Rrm_b\leqT (\Ed_b^\perp,\Ior)$. As a consequence, $\dfrak_b^\eq\leq \max\{\balc_{b,1},\dfrak\}$ and $\min\{\dalc_{b,1},\bfrak\}\leq \bfrak_b^\eq$.
\end{lemma}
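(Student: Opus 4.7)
The plan is to construct an explicit Tukey morphism $\Rrm_b \leqT (\Ed_b^\perp; \Ior)$ and then read off the inequalities from~\autoref{blascomp}. Unpacking the composition via~\autoref{compTK}, $(\Ed_b^\perp; \Ior) = \la \prod b \times \Ior^{\prod b},\ \prod b \times \Ior,\ \sqsubset_* \ra$, where $(x', f) \sqsubset_* (y, J)$ iff $x' =^\infty y$ and $f(y) \sqsubseteq J$. The guiding idea is that the set of coordinates where $x$ and $y$ agree, if infinite, can be repackaged as a partition $I^{x,y}\in \Ior$ whose $n$-th interval pinpoints the $n$-th agreement; any $J$ that coarsens $I^{x,y}$ in the sense of $\sqsubseteq$ will then, on cofinitely many $J_n$, contain one such agreement coordinate, which is exactly what $x \sqrb (J, y)$ demands.

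Concretely, for each $x \in \prod b$ I will define $f_x\colon \prod b \to \Ior$ by the following rule. Given $y \in \prod b$, set $A_{x,y} := \set{k<\omega}{x(k) = y(k)}$. If $A_{x,y}$ is infinite, enumerate it as $a_0 < a_1 < \cdots$ and put $f_x(y) := \seq{I_n^{x,y}}{n<\omega}$ with $I_0^{x,y} := [0, a_0]$ and $I_{n+1}^{x,y} := (a_n, a_{n+1}]$, so that $a_n \in I_n^{x,y}$ for every $n$; otherwise let $f_x(y)$ be any fixed element of $\Ior$. Now define
\[
F(x) := (x, f_x), \qquad G(y, J) := (J, y).
\]
If $F(x) \sqsubset_* (y, J)$, then $x =^\infty y$ forces $A_{x,y}$ to be infinite (so $f_x(y)$ is given by the first clause) and $f_x(y) \sqsubseteq J$ yields, for all but finitely many $n$, some $m$ with $I_m^{x,y} \subseteq J_n$; hence $a_m \in J_n$ with $x(a_m) = y(a_m)$, witnessing $x \sqrb (J, y) = G(y, J)$. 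This establishes $\Rrm_b \leqT (\Ed_b^\perp; \Ior)$.

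The two cardinal inequalities are now a computation. Using the duality identities $\bfrak(\Ed_b^\perp) = \dfrak(\Ed_b) = \dalc_{b,1}$ and $\dfrak(\Ed_b^\perp) = \bfrak(\Ed_b) = \balc_{b,1}$, together with $\Ior \eqT \baire$ from~\autoref{Charb-d}, an application of~\autoref{blascomp} gives
\[
\dfrak_b^\eq \leq \dfrak(\Ed_b^\perp; \Ior) = \balc_{b,1} \cdot \dfrak = \max\{\balc_{b,1}, \dfrak\},
\]
\[
\min\{\dalc_{b,1}, \bfrak\} = \bfrak(\Ed_b^\perp; \Ior) \leq \bfrak_b^\eq,
\]
as required. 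The only delicate step I foresee is choosing the partition $f_x(y)$ so that each of its intervals genuinely isolates a single agreement coordinate; once that placement is fixed, the transfer of $\sqsubseteq$ into $\sqrb$ is automatic and the rest is bookkeeping via the composition formula.
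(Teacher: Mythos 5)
Your proposal is correct and follows essentially the same route as the paper: the same Tukey morphism $x\mapsto(x,f_x)$, $(y,J)\mapsto(J,y)$ into the composition $(\Ed_b^\perp;\Ior)$, with the cardinal inequalities then read off from the composition formula; you merely make explicit the choice of partition isolating the agreement coordinates, which the paper leaves as "chosen such that".
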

\begin{proof}
    Define $\Psi_-\colon \prod b \to \prod b \times \Ior^{\prod b}$ by $\Psi_-(x):= (x,F_x)$ where, for $y\in \Ior$, if $y =^\infty x$ then $F_x(y):= I^y_x \in \Ior$ is chosen such that $\forall k<\omega\, \exists i\in I^y_{x,k}\colon y(i) = x(i)$; otherwise, $F_x(y)$ can be anything (in $\Ior$).

    Define $\Psi_+\colon \prod b\times\Ior\to \Ior\times \prod b$ by $\Psi_+(y,J) = (J,y)$. We check that $(\Psi_-,\Psi_+)$ is a Tukey connection. Assume that $x,y\in\prod b$, $J\in\Ior$ and that $\Psi_-(x)\sqsubset_* (y,J)$, i.e.\ $x =^\infty y$ and $I^y_x \sqsubseteq J$. Since each $I^y_{x,k}$ contains a point where $x$ and $y$ coincide, $I^y_x \sqsubseteq J$ implies that, for all but finitely many $n<\omega$, $J_n$ contains a point where $x$ and $y$ coincide, which means that $x\sqrb (J,y) = \Psi_+(y,J)$.
\end{proof}

\autoref{charM2} and~\autoref{dRbup} together yield:

\begin{corollary}\label{supRbM}
    For all $b\in\omega^\omega$, $\dfrak_b^\eq \leq \cof(\Mwf)$.
\end{corollary}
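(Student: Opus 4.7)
The plan is to combine the two results immediately preceding the corollary in the excerpt, namely \autoref{charM2} and \autoref{dRbup}. This is essentially a one-line deduction, so the ``proof'' will just amount to pointing out that both of the quantities bounding $\dfrak_b^\eq$ in \autoref{dRbup} are themselves bounded by $\cof(\Mwf)$ thanks to Miller's characterization.

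More concretely, I would fix $b\in\omega^\omega$ and invoke \autoref{dRbup} to get the inequality
\[\dfrak_b^\eq\leq \max\{\balc_{b,1},\dfrak\}.\]
I then observe, using \autoref{charM2}, that both terms on the right-hand side are members of the set whose supremum defines $\cof(\Mwf)$: explicitly, $\dfrak\leq \cof(\Mwf)$ and $\balc_{b,1}\leq \cof(\Mwf)$. Taking the maximum preserves the bound, so $\max\{\balc_{b,1},\dfrak\}\leq \cof(\Mwf)$, and the corollary follows by transitivity.

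There is no real obstacle here; the content of the corollary is entirely carried by \autoref{dRbup} (whose hard work lies in exhibiting the Tukey morphism via the choice of $I^y_x$) and by Miller's characterization of $\cof(\Mwf)$ in \autoref{charM2}. The corollary just records the conjunction. If anything is worth stressing, it is that the bound is uniform in $b$, so one can also conclude
\[\sup_{b\in\omega^\omega}\dfrak_b^\eq\leq \cof(\Mwf),\]
which may be useful later in the paper when comparing $\dfrak_b^\eq$ with the other cardinals in Cicho\'n's diagram.
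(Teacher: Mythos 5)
Your argument is exactly the paper's: the corollary is stated as an immediate consequence of \autoref{charM2} and \autoref{dRbup}, obtained by bounding both $\dfrak$ and $\balc_{b,1}$ by $\cof(\Mwf)$ via Miller's characterization and then applying $\dfrak_b^\eq\leq\max\{\balc_{b,1},\dfrak\}$. The proposal is correct and matches the intended (one-line) deduction.
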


Note that $\add(\Mwf) \leq \min\set{\bfrak_b^\eq}{b\in\omega^\omega}$ already follows from \autoref{bas:NA} and \eqref{la:addtch}.

We close this section with the proof of~\eqref{la:addtch}. Before, a natural question regarding~\eqref{la:addtch} that arises is 
\begin{question}
Does $\cov(\MAwf)=\sup\set{\dfrak_b^\eq}{b\in\omega^\omega}$ hold?
\end{question}

One negative answer to the prior question was given by the author along with Mej\'ia and Rivera-Madrid~\cite{CMR2}. Concretely, they proved the consistency of \[\sup\set{\dfrak_b^\eq}{b\in\omega^\omega}<\cov(\MAwf).\]  

We prove~\eqref{la:addtch} by using the subsequent two lemmas. 

\begin{lemma}\label{RsAMone}
Let $b\in\baire$. Then $\Rrm_{b}\leqT \Crm_\MAwf$. 
As a consequence, \[\non(\MAwf)\leq\min\set{\bfrak_b^\eq}{b\in\baire} \text{ and } \sup\set{\dfrak_b^\eq}{b\in\baire}\leq\cov(\MAwf).\]   
\end{lemma}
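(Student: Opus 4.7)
The plan is to build an explicit Tukey connection $(\Psi_-,\Psi_+)\colon \Rrm_b\to\Crm_\MAwf$ by coding elements of $\prod b$ as reals in $\cantor$, and then applying the Bartoszyński--Judah--Shelah characterization of $\MAwf$ (\autoref{thm:a1}) to translate a meager-additive cover back into a pair in $\Ior\times\prod b$.

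First, fix once and for all an interval partition $I=\seq{I_n}{n<\omega}\in\Ior$ with $|I_n|\ge\lceil\log_2\max(b(n),2)\rceil$, and fix an injection $e_n\colon b(n)\hookrightarrow 2^{I_n}$ for each $n$. Define
\[
\Psi_-\colon\prod b\to\cantor,\qquad \Psi_-(f){\upharpoonright}I_n:=e_n(f(n)).
\]
So $\Psi_-(f)$ is the concatenation of the codes $e_n(f(n))$ inside the blocks $I_n$.

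Second, for $A\in\MAwf$, apply \autoref{thm:a1} to $A$ and $I$ (using Shelah's refinement) to obtain $J\in\Ior$ \emph{coarser than} $I$ and $y\in\cantor$ such that for every $x\in A$, for all but finitely many $n$, there is $k$ with $I_k\subseteq J_n$ and $x{\upharpoonright}I_k=y{\upharpoonright}I_k$. Since $J$ is coarser than $I$, the set $J'_n:=\{k<\omega:I_k\subseteq J_n\}$ is a finite interval of integers, and $J':=\seq{J'_n}{n<\omega}\in\Ior$. Next, define $h\in\prod b$ by
\[
h(k):=\begin{cases} e_k^{-1}(y{\upharpoonright}I_k) & \text{if } y{\upharpoonright}I_k\in\ran e_k,\\ 0 & \text{otherwise.}\end{cases}
\]
Set $\Psi_+(A):=(J',h)\in\Ior\times\prod b$.

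Third, verify the Tukey condition. Suppose $f\in\prod b$ and $\Psi_-(f)\in A$. Then for almost every $n$ there is $k$ with $I_k\subseteq J_n$ (equivalently $k\in J'_n$) and $\Psi_-(f){\upharpoonright}I_k=y{\upharpoonright}I_k$. By construction $\Psi_-(f){\upharpoonright}I_k=e_k(f(k))\in\ran e_k$, hence $y{\upharpoonright}I_k\in\ran e_k$ as well, and decoding gives $h(k)=e_k^{-1}(y{\upharpoonright}I_k)=f(k)$. Thus for almost every $n$ there is $k\in J'_n$ with $f(k)=h(k)$, i.e.\ $f\sqrb(J',h)=\Psi_+(A)$. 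This shows $\Rrm_b\leqT\Crm_\MAwf$.

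The ``as a consequence'' clause is then immediate from the behaviour of $\bfrak(\cdot)$ and $\dfrak(\cdot)$ under Tukey reductions together with \autoref{exm:Iwf} and \autoref{exa:a2}: $\non(\MAwf)=\bfrak(\Crm_\MAwf)\le\bfrak(\Rrm_b)=\bfrak_b^\eq$ and $\dfrak_b^\eq=\dfrak(\Rrm_b)\le\dfrak(\Crm_\MAwf)=\cov(\MAwf)$ for every $b\in\baire$. The only subtle point in the whole argument is the use of Shelah's strengthening of \autoref{thm:a1} to force $J$ to be coarser than $I$; without this, the partition $J'$ obtained from $J$ would not naturally lie in $\Ior$, and the verification of $f\sqrb(J',h)$ would break down.
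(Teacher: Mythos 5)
Your proof is correct, and the overall architecture is the same as the paper's: a block-coding map $\Psi_-\colon\prod b\to\cantor$ together with a decoding map $\Psi_+\colon\MAwf\to\Ior\times\prod b$, followed by the standard translation of the Tukey reduction into the cardinal inequalities. The difference lies in which characterization does the work. You apply \autoref{thm:a1} (with Shelah's refinement that $J$ may be taken coarser than $I$) as a black box, and you are right that the coarseness is what makes $J'_n=\set{k}{I_k\subseteq J_n}$ a genuine interval partition. The paper instead unwinds the definition of $\MAwf$ by hand: it observes that $X+B_{I^b}=\bigcup_{x\in X}B_{x,I^b}$ is meager, covers it by a single chopped-real set $B_{y,J}$ via Talagrand's \autoref{ChM}, and reads off the combinatorial data through \autoref{baseM}; there $J$ need not be coarser than $I^b$, and the cruder rule $k\in J'_n$ iff $\min J_n<\max I^b_k\leq\max J_n$ is used to manufacture the interval partition. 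So your route is shorter given that \autoref{thm:a1} is already on the table but leans on the stronger (Shelah) form of that theorem, while the paper's route is self-contained modulo the elementary chopped-real machinery. A second minor difference: you code via arbitrary injections $e_n\colon b(n)\hookrightarrow 2^{I_n}$, whereas the paper first normalizes to $b(n)=2^{|I^b_n|}$ using the monotonicity $\Rrm_b\leqT\Rrm_{b'}$ from \autoref{exa:a2}; both handle the general $b$ correctly.
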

\begin{proof}
Given $b\in\baire$, thanks to \autoref{exa:a2} we may assume that there is some $I^b\in\Ior$ such that $b(n)=2^{|I_n^b|}$. 
Then, 
we can identify numbers ${<}b(n)$ with $0$-$1$ sequences of length $|I_n^b|$. 
We will find maps $\Psi_-\colon\prod b\to \cantor$  and  $\Psi_+\colon \MAwf\to \Ior\times\prod b$ such that, for any $ f\in\prod b$ and for any $X\in\MAwf$, $\Psi_-(f)\in X$ implies $f\sqrb \Psi_+(X)$.

Define $\Psi_-\colon\prod b\to \cantor$ as follows.
\begin{align*}
\Psi_-\colon\prod b & \rightarrow \cantor \\
       x & \mapsto  x_f^{I^b}={\underbrace{f(0)}_{\text{length }|I_0^b|}}^{\frown} \cdots\cdots{}^{\frown}{\underbrace{f(n)}_{\text{length } |I_n^b|}}^{\frown}\cdots
\end{align*} 

For $X\in \MAwf$, $X+B_{I^b}\in\Mwf$. Note that \[X+B_{I^b}=\bigcup_{x\in X}B_{x,I^b}.\]
Then, by~\autoref{ChM}, there are $y\in\cantor$ and $J\in\Ior$ such that 
\[\bigcup_{x\in X}B_{x,I^b}\subseteq B_{y, J}.\]
Let $h\in\prod b$ such that $y=x_{h}^{I^b}$ (recall that $b(n)=2^{|I^b_n|}$), so put $\Psi_+(X):=(J',h)$ where
\[k\in J'_n \text{ iff } \min J_n<\max I^b_k \leq \max J_n.\]

It remains to prove that, for any $ f\in\prod b$ and for any $X\in\MAwf$, $\Psi_-(f)\in X$ implies $ f\sqrb\Psi_+(X)$. Suppose that $x_{f}^{I_b}\in X$ and $\Psi_+(X) = (J',h)$. Then $B_{x_{f}^{I_b},I^b}\subseteq B_{x_{h}^{I^b},J}$. Hence, by using~\autoref{baseM}, 
\[\forall^\infty n\,\exists k\colon I_k^b\subseteq J_n\text{ and }x_f^{I^b}{\upharpoonright}I_k^b=x_{h}^{I^b}{\upharpoonright}I_k^b.\]
Since $I^b_k\subseteq J_n$ implies $k\in J'_n$, the equation above implies that $f\sqrb(J',h)$. 
\end{proof}

\begin{lemma}\label{prodRb}
    For any dominating family $D\subseteq\omega^\omega$, 
    $\Cbf_{\MAwf}\leqT \prod_{b\in D}\Rrm_b$. In particular, $\min_{b\in D}\bfrak_b^\eq \leq\non(\MAwf)$ and $\cov(\MAwf)\leq \prod_{b\in D}\dfrak_b^\eq$.
\end{lemma}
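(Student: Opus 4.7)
The plan is to construct an explicit Tukey connection $\Cbf_\MAwf \leqT \prod_{b\in D}\Rrm_b$; the stated cardinal inequalities then follow from \autoref{products} combined with $\bfrak(\Cbf_\MAwf) = \non(\MAwf)$ and $\dfrak(\Cbf_\MAwf) = \cov(\MAwf)$. By \autoref{exa:a2} I may assume that for each $b \in D$ there is $I^b \in \Ior$ with $b(n) = 2^{|I^b_n|}$, so each $x \in \cantor$ yields $x^b \in \prod b$ via $x^b(n) := x{\upharpoonright}I^b_n$, and each $h \in \prod b$ encodes $\tilde h \in \cantor$ by concatenation, i.e., $\tilde h{\upharpoonright}I^b_n = h(n)$. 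Define
\[
  \Psi_-(x) := \la x^b : b \in D\ra,\qquad \Psi_+(\bar\tau) := \lset{x \in \cantor}{\forall b \in D,\ x^b \sqrb (J^b, h^b)},
\]
for $\bar\tau = \la (J^b, h^b) : b \in D\ra$. Directly from the definitions, $\Psi_-(x) \sqsubset^\times \bar\tau$ iff $x \in \Psi_+(\bar\tau)$; hence the task reduces to showing $\Psi_+(\bar\tau) \in \MAwf$.

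For this I invoke the Bartoszy\'nski--Judah characterization (\autoref{thm:a1}). Let $I \in \Ior$ be arbitrary. Since $D$ is dominating and $\Ior \eqT \baire$ (\autoref{Charb-d}), choose $b \in D$ with $I \sqsubseteq I^b$, that is, for all sufficiently large $n$ some $I_m$ satisfies $I_m \subseteq I^b_n$. Put $J'_n := \bigcup_{k \in J^b_n}I^b_k$, so $J' \in \Ior$. For any $x \in \Psi_+(\bar\tau)$, $x^b \sqrb (J^b, h^b)$ gives, for each sufficiently large $n$, some $k \in J^b_n$ with $x{\upharpoonright}I^b_k = \tilde h^b{\upharpoonright}I^b_k$; taking $n$ large forces $k$ large, so $I^b_k$ contains some $I_m$. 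Then $I_m \subseteq J'_n$ and $x{\upharpoonright}I_m = \tilde h^b{\upharpoonright}I_m$, witnessing \autoref{thm:a1} with the pair $(J', \tilde h^b)$; hence $\Psi_+(\bar\tau) \in \MAwf$.

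The crucial step is the choice of $b \in D$ coarsening a given $I \in \Ior$: this is where the dominating hypothesis on $D$ is indispensable, as it is what allows the $I^b_k$-agreement provided by $\sqrb$ to descend to agreement on a block $I_m$ of the prescribed partition. With the Tukey connection established, \autoref{products} yields both $\min_{b \in D}\bfrak_b^\eq = \bfrak\bigl(\prod_{b \in D}\Rrm_b\bigr) \leq \non(\MAwf)$ and $\cov(\MAwf) \leq \dfrak\bigl(\prod_{b \in D}\Rrm_b\bigr) \leq \prod_{b \in D}\dfrak_b^\eq$, as required.
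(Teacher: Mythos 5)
Your proposal is correct and follows essentially the same route as the paper: the same maps $\Psi_-$ and $\Psi_+$ (identifying each $b\in D$ with $2^{I^b}$ for a dominating family of interval partitions), coarsening $J^b$ against $I^b$ to get $J'$, and verifying membership in $\MAwf$ via the Bartoszy\'nski--Judah characterization of \autoref{thm:a1}. Your write-up is in fact slightly more explicit than the paper's in handling an arbitrary $I\in\Ior$ by first choosing $b\in D$ with $I\sqsubseteq I^b$ and descending the agreement from the blocks $I^b_k$ to blocks $I_m$, a step the paper leaves implicit in the choice of the $\Ior$-dominating family.
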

\begin{proof}
    Without loss of generality, we may assume that there is some $\Ior$-dominating family $D_0$, i.e.\ $\forall I\in \Ior\, \exists J\in D_0\colon I\sqsubseteq J$, such that for each $b\in D$ there is some $I\in D_0$ such that $b = 2^I$, i.e.\ $b(n) = 2^{I_n}$ for all $n<\omega$. 
    
    Define $\Psi_-\colon 2^\omega\to \prod_{I\in D_0} 2^I$ by $\Psi_-(x)(I):=\seq{x\frestr I_n}{n<\omega}$; and define $\Psi_+\colon \prod_{I\in D_0}\Ior\times 2^I\to \MAwf$ such that, for $z=\seq{(J^I,z^I)}{I\in\Ior}$,
    \[\Psi_+(z) := \set{x\in 2^\omega}{\forall I\in D_0\, \forall^\infty n<\omega\, \exists k\in J^I_k\colon x\frestr I_k = z^I(k)}.\]
    For each $I\in D_0$ let $I'_n:=\bigcup_{k\in J^I_n}I_k$ and $y^I\in 2^\omega$ the concatenation of all the $z^I(k)\in 2^{I_k}$ for $k<\omega$, i.e., $y^I\frestr I_k = z^I(k)$. Then $I':=\seq{I'_n}{n<\omega}\in \Ior$, $I\sqsubseteq I'$ and
    \[\forall^\infty n<\omega\, \exists k<\omega\colon I_k\subseteq I'_n \text{ and }x\frestr I_k = y\frestr I_k.\]
    Therefore, by \autoref{thm:a1}, $\Psi_+(z)\in \MAwf$. $(\Psi_-,\Psi_+)$ is clearly the required Tukey connection.
\end{proof}

\section{Consistent results}\label{sec:cons}

The main goal of this section is to establish~\autoref{thm:a6}, which is based on~\cite[Sec.~3]{CMR2}.

We now present one forcing notion closely related to $\bfrak_b^\eq$, that is to say, that increases $\bfrak_b^\eq$.

\begin{definition}[{\cite[Def.~3.20]{CMR2}}]\label{def_Pb}
Given $b\in\baire$, the poset $\Por_b$ is defined as follows:
A condition $p=(s,t,F)\in\Por_b$ if it fulfills the following:
\begin{itemize}
    \item $s\in\omega^{<\omega}$ is increasing with $s(0)>0$ (when $|s|>0$), 
    \item $t\in\Seq_{<\omega}(b):=\bigcup_{n<\omega}\prod_{i<n}b(i)$, and 
    \item $F\in[\prod b]^{<\aleph_0}$.
\end{itemize}
We order $\Por_b$ by setting $(s',t',F')\leq (s,t,F)$ iff $s\subseteq s'$, $t\subseteq t'$,
$F\subseteq F'$ and,
\[\forall f\in F\,\forall n \in|s'|\smallsetminus|s|\, \exists k\in[s'(n-1),s'(n))\colon f(k)=t'(k). \text{ (Here $s'(-1):=0$.)}\]
\end{definition}

\begin{fct}
  Let $b\in\baire$. Then $\Por_b$ is $\sigma$-centered.
\end{fct}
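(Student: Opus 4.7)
The plan is to exhibit a partition of $\Por_b$ into countably many centered pieces, indexed by the pairs $(s,t)$ that appear as the first two coordinates of a condition. Since $b\in\baire$, each $b(n)$ is a natural number, so $\Seq_{<\omega}(b) = \bigcup_{n<\omega}\prod_{i<n}b(i)$ is countable; likewise $\omega^{<\omega}$ is countable, so the index set of pairs $(s,t)$ with $s\in\omega^{<\omega}$ increasing (and $s(0)>0$ when $|s|>0$) and $t\in\Seq_{<\omega}(b)$ is countable. For each such pair put
\[
C_{s,t}:=\set{(s,t,F)\in \Por_b}{F\in[\textstyle\prod b]^{<\aleph_0}},
\]
so that $\Por_b=\bigcup_{(s,t)} C_{s,t}$.

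The key claim is that each $C_{s,t}$ is centered. Given finitely many conditions $p_1=(s,t,F_1),\ldots,p_m=(s,t,F_m)$ in $C_{s,t}$, the natural candidate for a common extension is
\[
q:=(s,t,F_1\cup\cdots\cup F_m).
\]
Clearly $q\in\Por_b$, and for each $i\le m$ the inclusions $s\subseteq s$, $t\subseteq t$, $F_i\subseteq F_1\cup\cdots\cup F_m$ hold trivially. The only nontrivial part of the ordering is the promise clause
\[
\forall f\in F_i\,\forall n\in|s|\smallsetminus|s|\, \exists k\in[s(n-1),s(n))\colon f(k)=t(k),
\]
which is vacuously satisfied because $|s|\smallsetminus|s|=\varnothing$. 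Hence $q\leq p_i$ for all $i$, showing $C_{s,t}$ is centered (in fact linked witnesses can be glued together $m$-at-a-time for every finite $m$).

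Putting this together, $\Por_b$ is a countable union of centered sets, which is exactly $\sigma$-centeredness. There is essentially no obstacle: the whole argument hinges on the observation that the combinatorial promise in the order is vacuous whenever the $s$-coordinate does not grow, so conditions sharing a common $(s,t)$ can be amalgamated by simply taking the union of their finite side-sets $F$.
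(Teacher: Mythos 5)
Your proof is correct and follows exactly the paper's approach: the paper also partitions $\Por_b$ into the countably many pieces $P_{s,t}$ of conditions sharing the first two coordinates, and leaves to the reader the verification (which you carry out explicitly) that taking the union of the finite sets $F$ yields a common extension because the promise clause is vacuous when $|s'|=|s|$.
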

\begin{proof}
For $s\in\omega^{<\omega}$ increasing, and for $t\in\Seq_{<\omega}(b)$, set 
\[P_{s,t}:=\set{(s',t',F)\in\Por_b}{s'=s\textrm{\ and\ }  t'=t}\]
It is not hard to verify that $P_{s,t}$ is centered and $\bigcup_{s\in\omega^{<\omega},\, t\in\Seq_{<\omega}(b)}P_{s,t}=\Por_b$.    
\end{proof}

Let $G$ be a $\Por_b$-generic filter over $V$. In $V[G]$, define 
\[r_{\gen}:=\bigcup\set{s}{\exists t,F\colon (s,t,F) \in G} \text{ and } 
h_{\gen}:=\bigcup\set{t}{\exists s,F\colon (s,t,F) \in G}.\]
Then $(r_{\gen},h_{\gen})\in\baire\times\prod b$ and, for every $f\in \prod b\cap V$, and for all but finitely many $n\in\omega$ there is some $k\in[r_{\gen}(n),r_{\gen}(n+1)]$ such that $f(k)=h_{\gen}(k)$. We can identify the generic real with $(J_{\gen},h_{\gen})\in\Ior\times\prod b$ where $J_{\gen,n} :=[r_\gen(n-1),r_\gen(n))$, which satisfies that, for every $f\in \prod b\cap V$, $f  \sqrb (J_{\gen},h_{\gen})$.

\begin{definition}[{\cite{mejiavert,BCM}}]\label{def:Fr-l}
    Let $F\subseteq\pts(\omega)$ be a filter. We assume that all filters are \emph{free}, i.e.\ they contain the \emph{Frechet filter} $\Fr:=\set{\omega\menos a}{a\in[\omega]^{<\aleph_0}}$. A set $a\subseteq \omega$ is \emph{$F$-positive} if it intersects every member of $F$. Denote by $F^+$ the collection of $F$-positive sets. 

    Given a poset $\Por$ and $Q\subseteq \Por$, $Q$ is $F$-linked if, for any $\la p_n\colon n<\omega\ra\in Q^\omega$, there is some $q\in \Por$ such that
    \[q\Vdash \{n<\omega\colon p_n\in \dot G\} \in F^+,\text{ i.e.\ it intersects every member of $F$.}\]
    Note that, in the case $F=\Fr$, the previous equation is ``$q\Vdash \{n<\omega\colon p_n\in \dot G\}$ is infinite".

    We say that $Q$ is \emph{uf-linked (ultrafilter-linked)} if it is $F$-linked for any filter $F$ on $\omega$ containing the \emph{Frechet filter} $\Fr$.
    
    For an infinite cardinal $\mu$, $\Por$ is \emph{$\mu$-$F$-linked} if $\Por = \bigcup_{\alpha<\mu}Q_\alpha$ for some $F$-linked $Q_\alpha$ ($\alpha<\mu$). When these $Q_\alpha$ are uf-linked, we say that $\Por$ is \emph{$\mu$-uf-linked}.
\end{definition}

Note that if $F\subseteq F'$ are filters on $\omega$, then $\sigma$-$\rm uf$-linked${}\Rightarrow{}\sigma$-$F'$-linked${}\Rightarrow{}\sigma$-$F$-linked${}\Rightarrow{}\sigma$-$\Fr$-linked. For ccc posets, however, we have:

\begin{lemma}[{\cite[Lem~5.5]{mejiavert}}]\label{quasiuf}
If $\Por$ is ccc then any subset of $\Por$ is uf-linked iff it is Fr-linked. 
\end{lemma}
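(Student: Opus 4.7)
The plan is to establish the non-trivial direction, namely that $\Fr$-linked implies uf-linked for subsets of a ccc poset; the reverse implication is immediate from the definition since $\Fr$ is itself a free filter containing $\Fr$, so being uf-linked specialises to being $\Fr$-linked.

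To prove $\Fr$-linked $\Rightarrow$ uf-linked, I would assume $Q\subseteq\Por$ is $\Fr$-linked, $\Por$ is ccc, and fix any filter $F\supseteq\Fr$ on $\omega$. Given $\seq{p_n}{n<\omega}\in Q^\omega$, I would argue by contradiction: suppose no $q\in\Por$ satisfies $q\Vdash \set{n<\omega}{p_n\in\dot G}\in F^+$. Then
\[
D:=\lset{q\in\Por}{\exists A\in F\colon q\Vdash \set{n<\omega}{p_n\in\dot G}\cap A=\vacio}
\]
is dense in $\Por$, so by the ccc it contains a countable maximal antichain $\set{r_k}{k<\omega}$ (a MAC in $\Por$ by density of $D$), together with witnesses $A_k\in F$ such that $r_k\Vdash \set{n<\omega}{p_n\in\dot G}\cap A_k=\vacio$.

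The core step is a diagonalisation. I would set $B_k:=\bigcap_{i\leq k}A_i$, noting that each $B_k$ lies in $F$ and so is infinite, and that the sequence is $\subseteq$-decreasing. Then I would pick pairwise distinct $n_i\in B_i$ for $i<\omega$, so that $n_i\in A_j$ whenever $j\leq i$. Now $\seq{p_{n_i}}{i<\omega}\in Q^\omega$, and $\Fr$-linkedness supplies some $q^*\in\Por$ with $q^*\Vdash |\set{i<\omega}{p_{n_i}\in\dot G}|=\aleph_0$. Since $\set{r_k}{k<\omega}$ is maximal in $\Por$, $q^*$ is compatible with some $r_{k_0}$; any common extension $q^{**}\leq q^*,r_{k_0}$ forces simultaneously that $\set{n<\omega}{p_n\in\dot G}\cap A_{k_0}=\vacio$ and that $\set{i<\omega}{p_{n_i}\in\dot G}$ is infinite.

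This yields the contradiction: for every $i\geq k_0$ we have $n_i\in B_i\subseteq A_{k_0}$, hence $q^{**}\Vdash p_{n_i}\notin\dot G$ for all such $i$, so $q^{**}\Vdash \set{i<\omega}{p_{n_i}\in\dot G}\subseteq\{0,\ldots,k_0-1\}$, contradicting the forced infinitude. The main obstacle I anticipate is spotting the right diagonal: $\Fr$-linkedness applied to the original $\seq{p_n}{n<\omega}$ only produces a witness for the \emph{whole} generic set being infinite, whereas the argument requires a single $q^*$ that cooperates with \emph{every} $A_k$ simultaneously; the decreasing intersections $B_k$ together with the choice $n_i\in B_i$ force eventual membership in each fixed $A_{k_0}$, which is exactly what makes the two witnesses $q^*$ and $r_{k_0}$ irreconcilable.
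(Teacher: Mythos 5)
Your proof is correct: the reverse direction is indeed immediate, and the forward direction via the dense set $D$, a countable maximal antichain $\set{r_k}{k<\omega}\subseteq D$ with witnesses $A_k\in F$, the decreasing intersections $B_k$, and the diagonal subsequence $\seq{p_{n_i}}{i<\omega}$ is exactly the argument of the cited source (the paper itself only quotes the lemma from \cite{mejiavert} without reproducing a proof). The only cosmetic point is that the maximal antichain may be finite, in which case one simply intersects the finitely many $A_k$ directly; this does not affect the argument.
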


Below are presented a few well-known and basic instances of $\sigma$-uf-linked posets.

\begin{example}\label{exa:b3}
    \ 
    \begin{enumerate}[label = \normalfont (\arabic*)]
        \item\label{exa:b3:1} Let $\Por$ be a poset and $Q\subseteq\Por$.  Note that a sequence $\seq{ p_n}{n<\omega}$ in $Q$ witnesses that $Q$ is \underline{not} $\Fr$-linked iff the set $\set{q\in\Por}{\forall^\infty n<\omega\colon q\perp p_n}$ is dense. 
       
        \item\label{exa:b3:2} Any singleton is uf-linked. Hence, any poset $\Por$ is $|\Por|$-uf-linked. In particular, Cohen forcing is $\sigma$-uf-linked.

        \item\label{exa:b3:3} Random forcing $\Bor$ is $\sigma$-uf-linked \cite{mejiavert}.

        \item\label{exa:b3:4} The forcing eventually different real forcing $\Eor$ (see \cite{mejiavert}) is $\sigma$-uf-linked. 
         This poset satisfies a stronger property see~\autoref{exm:var}~\ref{exm:var:2}.
    \end{enumerate}
\end{example}

The upcoming lemma indicates that $\sigma$-$\Fr$-linked poset does not add dominating reals. 

\begin{lemma}[{\cite{mejiavert}}]\label{lem:d11}
    Any $\mu$-$\Fr$-linked poset is $\mu^+$-$\omega^\omega$-good.
\end{lemma}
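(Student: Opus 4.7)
The plan is to unpack the standard Judah--Shelah definition of $\omega^\omega$-goodness and, for any $\Por$-name $\dot h$ for a real in $\omega^\omega$, to produce a witnessing family $H\subseteq\omega^\omega$ of size at most $\mu<\mu^+$ directly from the $\mu$-$\Fr$-linked decomposition $\Por=\bigcup_{\alpha<\mu}Q_\alpha$. For each $\alpha<\mu$, each $n<\omega$, and each $p\in Q_\alpha$, set
\[m^*_{p,n}:=\min\set{m<\omega}{\exists\, q\le p,\ q\Vdash \dot h(n)\le m},\]
which is well defined because some extension of $p$ decides $\dot h(n)$. Define $h_\alpha(n):=\sup\set{m^*_{p,n}}{p\in Q_\alpha}$ and $H:=\set{h_\alpha}{\alpha<\mu}$.

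The main step, where the $\Fr$-linkedness of each $Q_\alpha$ is essential, is to show that $h_\alpha(n)<\omega$. If not, one extracts a sequence $\seq{p_k}{k<\omega}\in Q_\alpha^\omega$ with $m^*_{p_k,n}>k$. A density argument gives $p_k\Vdash \dot h(n)>k$: if some $q\le p_k$ forced $\dot h(n)\le k$, then $m^*_{p_k,n}\le k$, contradicting the choice of $p_k$. Applying $\Fr$-linkedness to $\seq{p_k}{k<\omega}$ yields $r\in\Por$ with $r\Vdash\set{k<\omega}{p_k\in\dot G}$ infinite, and hence $r\Vdash \dot h(n)>k$ for arbitrarily large $k$, contradicting $\Vdash \dot h(n)\in\omega$.

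To verify that $H$ witnesses goodness, take $g\in\omega^\omega$ with $g\not\le^* h_\alpha$ for every $\alpha<\mu$, and suppose toward a contradiction that some $p\in\Por$ and $N<\omega$ satisfy $p\Vdash \forall\, n\ge N\colon \dot h(n)\ge g(n)$. Choose $\alpha<\mu$ with $p\in Q_\alpha$ and $n\ge N$ with $g(n)>h_\alpha(n)\ge m^*_{p,n}$; by definition of $m^*_{p,n}$, some $q\le p$ satisfies $q\Vdash \dot h(n)\le m^*_{p,n}<g(n)$, contradicting $p\Vdash \dot h(n)\ge g(n)$. The principal obstacle is the finiteness step above, which is precisely where the combinatorial content of $\Fr$-linkedness is consumed; everything else is bookkeeping around the definition of $m^*_{p,n}$ and a standard density argument.
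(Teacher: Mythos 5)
Your proof is correct, and it is essentially the standard argument from the cited source \cite{mejiavert} (the paper itself only states the lemma with a citation and gives no proof): one defines $m^*_{p,n}$ as the least value of $\dot h(n)$ decidable below $p$, uses $\Fr$-linkedness of each $Q_\alpha$ to show the supremum over $p\in Q_\alpha$ is finite, and then the family $\set{h_\alpha}{\alpha<\mu}$ witnesses goodness by the density argument you give. All steps, including the reduction to a single condition $p$ deciding the threshold $N$, are sound.
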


We now focus on reviewing one linkedness property stronger than ultrafilter linkedness.

\begin{definition}[{\cite{GMS,BCM,CMR2}}]\label{defuflim}
    Given a (non-principal) ultrafilter $D$ on $\omega$ and $Q\subseteq\Por$, say that $Q$ is \emph{$D$-$\lim$-linked} if there are a $\Por$-name $\dot D'$ of an ultrafilter on $\omega$ extending $D$ and a map $\lim^D\colon Q^\omega\to \Por$ such that, whenever $\bar p = \la p_n\colon n<\omega\ra\in Q^\omega$,
    \[{\lim}^D\, \bar p \Vdash \{n<\omega\colon p_n\in \dot G\} \in \dot D'.\]
    A set $Q\subseteq \Por$ has \emph{uf}-$\lim$-linked if it is $D$-\textrm{lim}-linked for any ultrafilter $D$. 

In addition, for an infinite cardinal $\theta$, the poset $\Por$ is~\emph{uniformly $\mu$-$D$-$\lim$-linked} if $\Por = \bigcup_{\alpha<\theta}Q_\alpha$ where each $Q_\alpha$ is $D$-$\lim$-linked and the $\Por$-name $\dot D'$ above mentioned only depends on $D$ (and not on $Q_\alpha$, although we have different limits for each $Q_\alpha$. When these $Q_\alpha$ are \emph{uf}-$\lim$-linked, we say that $\Por$ is~\emph{uniformly $\mu$-uf-$\lim$-linked}
\end{definition}

\begin{remark}
Any uf-$\lim$-linked set $Q\subseteq \Por$ is clearly uf-linked, which implies that it is $\Fr$-linked.
\end{remark}

\begin{example}\label{exm:var}
\ 
    \begin{enumerate}[label=\rm(\arabic*)]
        \item Any singleton is uf-$\lim$-linked. As a consequence, any poset $\Por$ is uniformly $|\Por|$-uf-$\lim$-linked, witnessed by its singletons.
       \item\label{exm:var:2}  $\Eor$ is uniformly $\sigma$-uf-$\lim$-linked (see~\cite{GMS}, see also~\cite{Miller}).

       \item $\Bor$ is not $\sigma$-uf-$\lim$-linked (see~\cite[Rem.~3.10]{BCM}).
    \end{enumerate}
\end{example}

Next, we show another example of uniformly $\sigma$-uf-$\lim$-linked.

\begin{lemma}[{\cite[Thm.~3.21]{CMR2}}]\label{lem:b11}
Let $b\in\baire$. Then $\Por_b$ is uniformly $\sigma$-uf-$\lim$-linked.
\end{lemma}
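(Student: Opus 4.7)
My plan is to refine the countable partition used for $\sigma$-centeredness and build the $D$-limit from the finite ``promise'' coordinate $F$. I would take the partition $\Por_b=\bigcup_{(s,t)}P_{s,t}$, where $P_{s,t}=\set{(s',t',F)\in\Por_b}{s'=s \text{ and } t'=t}$. The index set is countable, since $s\in\omega^{<\omega}$ ranges over increasing stems and $t\in\Seq_{<\omega}(b)$, so this gives the $\sigma$-decomposition demanded by the definition of $\sigma$-uf-$\lim$-linked. Each $P_{s,t}$ is in fact centered: any finitely many $(s,t,F_0),\dots,(s,t,F_k)$ admit the common extension $(s,t,F_0\cup\dots\cup F_k)$ (the promise condition holds vacuously, since no new stem coordinates are added).

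Next, for a non-principal ultrafilter $D$ on $\omega$, I would define the $\Por_b$-name $\dot D'$ uniformly in $D$: let $\dot D'$ be forced to be an ultrafilter on $\omega$ in $V[\dot G]$ extending $D$, defined using only $D$ together with the canonical generic objects $(J_{\gen},h_{\gen})\in\Ior\times\prod b$ (for instance, $\dot D'$ could be the filter generated by $D$ together with sets read off from the growing generic $F$-parts, interpreted through the statistics of $h_{\gen}$ on $J_{\gen}$). The key point is that this definition makes no reference to any particular sequence $\seq{p_n}{n<\omega}$.

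For each sequence $\seq{p_n}{n<\omega}\in P_{s,t}^\omega$ with $p_n=(s,t,F_n)$, I would define $\lim^D\seq{p_n}{n<\omega}:=(s,t,F^*)$, where $F^*$ is a finite set extracted from $\seq{F_n}{n<\omega}$ by a canonical $D$-weighted selection; for instance, retaining the functions $f\in\bigcup_n F_n$ for which $\set{n<\omega}{f\in F_n}\in D$, if a non-empty finite such collection is available, and taking $F^*=\varnothing$ otherwise. The verification that $\lim^D\seq{p_n}{n<\omega}\Vdash\set{n<\omega}{p_n\in\dot G}\in\dot D'$ then proceeds by a density argument: any extension $(s',t',F')$ of the limit forces $p_n\in\dot G$ exactly when $F_n\subseteq F'$ and the promise $\forall f\in F_n\,\forall m\in|s'|\smallsetminus|s|\,\exists k\in[s'(m-1),s'(m))\colon f(k)=t'(k)$ holds, and genericity guarantees that the set of such $n$ lies in the uniformly defined $\dot D'$.

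The main obstacle is precisely the uniformity of $\dot D'$. Since $\bigcup_n F_n$ is generally an infinite countable set while each $F'$ must be finite, the limit $\lim^D\seq{p_n}{n<\omega}$ cannot be a common lower bound of the $p_n$'s, so one cannot naively force $\set{n<\omega}{p_n\in\dot G}$ to contain a ground-model set from $D$. The technical core is therefore a careful bookkeeping that uses only $D$ on one side (through $\dot D'$) and only $\seq{p_n}{n<\omega}$ on the other (through $\lim^D$), while making the forcing relation hold. This is where the promise structure of $\Por_b$ becomes essential: the $F$-parts grow monotonically along the generic filter, so the ``generic $F_\infty$'' traps a $D$-large set of $F_n$'s provided $\lim^D$ prepares the generic to conform to the pattern prescribed by $\dot D'$.
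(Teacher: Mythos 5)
There is a genuine gap, in fact two. First, your decomposition is too coarse: the cells must also bound the size of the finite part, as in the paper's $P_{s,t,m}=\set{(s',t',F)\in\Por_b}{s'=s,\ t'=t,\ |F|\leq m}$. A sequence $\seq{p_n}{n<\omega}$ in your $P_{s,t}$ may have $|F_n|\to\infty$, and then no finite $F^*$ can play the role of a limit; the bound $m$ is what guarantees a unique $m_*\leq m$ with $\set{n<\omega}{|F_n|=m_*}\in D$, so that one can take limits slot by slot. Second, your limit rule (keep those $f$ with $\set{n<\omega}{f\in F_n}\in D$) fails already in the basic case $p_n=(s,t,\{f_n\})$ with the $f_n$ pairwise distinct: every set $\set{n}{f\in F_n}$ is at most a singleton, hence not in the non-principal ultrafilter $D$, so your limit is $(s,t,\varnothing)$, which exerts no control whatsoever over $\set{n<\omega}{p_n\in\dot G}$ (below a condition whose $t'$-part dodges all the $f_n$ on a new block, this set is forced to be empty). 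The correct operation is the coordinatewise topological $D$-limit: enumerate $F_n=\set{x_{n,k}}{k<m_n}$ in increasing lexicographic order, restrict to the $D$-large set where $m_n=m_*$, and let $x_k(i)$ be the unique value in the finite set $b(i)$ that $x_{n,k}(i)$ takes for $D$-many $n$; then $F^*=\set{x_k}{k<m_*}$. The essential feature your $F^*$ lacks is that $D$-many $F_n$ agree with $F^*$ on any prescribed finite set of coordinates, so a condition below $(s,t,F^*)$ fulfilling a promise on finitely many new blocks fulfills the same promise for $D$-many of the $p_n$.

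Your treatment of $\dot D'$ is also only a placeholder. The actual content of the proof, and the source of the uniformity, is the claim that
\[D\cup\bigcup_{s,t,m}\lset{\{n<\omega\colon p_n\in G\}}{\bar p\in P_{s,t,m}^\omega\cap V,\ {\lim}^D\bar p\in G}\]
has the finite intersection property for every generic $G$; one then lets $\dot D'$ name any ultrafilter extending the filter this family generates. This definition refers only to $D$ and to the limit operation on all cells simultaneously, not to any particular cell or sequence, which is exactly what ``uniformly'' requires. Your proposed $\dot D'$, built from statistics of $(J_{\gen},h_{\gen})$, is not shown to contain the sets $\{n<\omega\colon p_n\in\dot G\}$, and no density argument of the kind you sketch can substitute for the finite intersection property claim.
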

\begin{proof}
For $s\in\omega^{<\omega}$, $t\in\Seq_{<\omega}(b)$ and $m<\omega$
\[P_{s,t,m}:=P_b(s,t,m)=\set{(s',t',F)\in\Por_b}{s'=s,\  t'=t \text{ and }|F|\leq m}.\]
For an ultrafilter $D$ on $\omega$,  and $\bar{p}=\seq{p_n}{n\in\omega}\in P_{s,t,m}$, we show how to define $\lim^D\bar p$. Let $p_n=(s,t,F_n) \in P_{s,t,m}$. Considering the lexicographic order $\lhd$ of $\prod b$, and let $\set{x_{n,k}}{k<m_n}$ be a $\lhd$-increasing enumeration of $F_n$ where $m_n \leq m$. Next find an unique  $m_*\leq m$ such that $A:=\set{n\in\omega}{m_n=m_*}\in D$. For each $k<m_*$, define $x_k:=\lim_n^D x_{n,k}$ in $\prod b$ where $x_k(i)$ is the unique member of $b(i)$ such that $\set{n\in A}{x_{n,k}(i) = x_k(i)} \in D$ (this coincides with the topological $D$-limit). Therefore, we can think of $F:=\set{x_k}{k<m_*}$ as the $D$-limit of $\seq{F_n}{n<\omega}$, so we define $\lim^D \bar p:=(s,t,F)$. Note that $\lim^D \bar p \in P_{s,t,m}$.

The sequence $\la P_{s,t,m}\colon s\in\omega^{<\omega}, t\in\Seq_{<\omega}(b), m<\omega\ra$ witnesses that $\Por_b$ is uniformly $\sigma$-$D$-$\lim$-linked for any ultrafilter $D$ on $\omega$. This is a consequence of the following claim:
\begin{clm}[{\cite[Claim.~3.22]{CMR2}}]
The set
    \[D\cup\bigcup_{s,m}\lset{\{n<\omega\colon p_n\in G\}}{\bar p\in P_{s,t,m}^\omega\cap V,\ {\lim}^D\, \bar p\in G}\]
    has the finite intersection property whenever $G$ is $\Por$-generic over $V$. \qedhere
\end{clm}
\end{proof}

We below review briefly the preservation theory of unbounded families
presented in~\cite[Sect.~4]{CM}. This a generalization of Judah and Shelah's~\cite{JS} and
Brendle's~\cite{Br} preservation theory.

\begin{definition}\label{DefRelSys}
Let $\Rrm=\la X,Y,\sqsubset\ra$ be a relational system and let $\theta$ be a cardinal.  
\begin{enumerate}[label=\rm(\arabic*)]
  \item For a set $M$,
  \begin{enumerate}[label=\rm(\roman*)]
      \item An object $y\in Y$ is \textit{$\Rrm$-dominating over $M$} if $x\sqsubset y$ for all $x\in X\cap M$. 
      
      \item An object $x\in X$ is \textit{$\Rrm$-unbounded over $M$} if it $\Rrm^\perp$-dominating over $M$, that is, $x\not\sqsubset y$ for all $y\in Y\cap M$. 
  \end{enumerate}
  
  
  \item A family $\set{x_i}{i\in I}\subseteq X$ is \emph{strongly $\theta$-$\Rrm$-unbounded} if 
  $|I|\geq\theta$ and, for any $y\in Y$, $|\set{i\in I }{x_i\sqsubset y}|<\theta$.
\end{enumerate}
\end{definition}

We look at the following type of well-defined relational systems.

\begin{definition}\label{def:Prs}
Say that $\Rrm=\langle X,Y,\sqsubset\rangle$ is a \textit{Polish relational system (Prs)} if
\begin{enumerate}[label=\rm(\arabic*)]
\item\label{def:Prs1} $X$ is a Perfect Polish space,
\item\label{def:Prs2} $Y$ is a non-empty analytic subspace of some Polish $Z$, and
\item\label{def:Prs3} $\sqsubset=\bigcup_{n<\omega}\sqsubset_{n}$ where $\seq{{\sqsubset_{n}}}{n\in\omega}$ is some increasing sequence of closed subsets of $X\times Z$ such that, for any $n<\omega$ and for any $y\in Y$,
$({\sqsubset_{n}})^{y}=\set{x\in X}{x\sqsubset_{n}y }$ is closed nowhere dense.
\end{enumerate}
\end{definition}

\begin{remark}\label{Prsremark}
By~\autoref{def:Prs}~\ref{def:Prs3}, $\la X,\Mwf(X),\in\ra$ is Tukey below $\Rrm$ where $\Mwf(X)$ denotes the $\sigma$-ideal of meager subsets of $X$. Therefore, $\bfrak(\Rrm)\leq \non(\Mwf)$ and $\cov(\Mwf)\leq\dfrak(\Rrm)$.
\end{remark}

For the rest of this section, fix a Prs $\Rrm=\langle X,Y,\sqsubset\rangle$ and an infinite cardinal $\theta$. 

\begin{definition}[Judah and Shelah {\cite{JS}}, Brendle~{\cite{Br}}]\label{def:good}
A poset $\Por$ is \textit{$\theta$-$\Rrm$-good} if, for any $\Por$-name $\dot{h}$ for a member of $Y$, there is a non-empty set $H\subseteq Y$ (in the ground model) of size ${<}\theta$ such that, for any $x\in X$, if $x$ is $\Rrm$-unbounded over  $H$ then $\Vdash x\not\sqsubset \dot{h}$.

We say that $\Por$ is \textit{$\Rrm$-good} if it is $\aleph_1$-$\Rrm$-good.
\end{definition}

The previous is a standard property associated with preserving $\bfrak(\Rrm)$ small and $\dfrak(\Rrm)$ large after forcing extensions.

\begin{remark}
Notice that $\theta<\theta_0$
implies that any $\theta$-$\Rrm$-good poset is $\theta_0$-$\Rrm$-good. Also, if $\Por \lessdot\Qor$ and $\Qor$ is $\theta$-$\Rrm$-good, then $\Por$ is $\theta$-$\Rrm$-good.
\end{remark}

\begin{lemma}[{\cite[Lemma~2.7]{CM}}]\label{lem:c2}
Assume that $\theta$ is a regular cardinal. Then any poset of size ${<}\theta$
is $\theta$-$\Rrm$-good. In particular, Cohen forcing $\Cor$ is $\Rrm$-good.
\end{lemma}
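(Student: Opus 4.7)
The plan is to turn the goodness requirement into a covering statement and then use Shoenfield absoluteness to extract ground--model witnesses. Given a $\Por$-name $\dot h$ for a member of $Y$, I want to build $H\subseteq Y$ with $H\in V$ and $|H|<\theta$ such that every $x\in X$ which is $\Rrm$-unbounded over $H$ is forced to satisfy $x\not\sqsubset \dot h$. Contrapositively, it is enough to cover the set $S:=\{x\in X:\exists p\in\Por,\ p\Vdash x\sqsubset\dot h\}$ by $\bigcup_{y\in H,\,n<\omega}(\sqsubset_n)^y$. Using $\sqsubset=\bigcup_n\sqsubset_n$ I would decompose $S=\bigcup_{(p,n)\in\Por\times\omega}S_{p,n}$, with $S_{p,n}:=\{x\in X:p\Vdash x\sqsubset_n\dot h\}$, and reduce the task to producing, for every $(p,n)$, a single $y_{p,n}\in Y\cap V$ with $S_{p,n}\subseteq(\sqsubset_n)^{y_{p,n}}$. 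The set $H:=\{y_{p,n}:(p,n)\in\Por\times\omega\}$ will then have cardinality at most $|\Por|\cdot\aleph_0$, which is ${<}\theta$ when $\theta\geq\aleph_1$ is regular. (The case $\theta=\aleph_0$ amounts to $\Por$ finite, where $V[G]=V$ for every generic $G$; then $\dot h$ realises only finitely many values in $Y\cap V$, and $H$ can be taken to be that finite set directly.)

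For the substantive step---producing $y_{p,n}$ in $V$---I would exploit the Polish--relational--system structure. Since $\sqsubset_n$ is closed in $X\times Z$, each horizontal section $(\sqsubset_n)_x:=\{y\in Z:x\sqsubset_n y\}$ is closed in $Z$, and the map $x\mapsto(\sqsubset_n)_x$ is upper semicontinuous in the sense that $x_k\to x$ with $x_k\sqsubset_n y$ for all $k$ forces $x\sqsubset_n y$. I would then pick in $V$ a countable $D_{p,n}\subseteq S_{p,n}$ dense in $S_{p,n}$ (possible because $X$ is separable), so that $S_{p,n}\subseteq\overline{D_{p,n}}$; by the closedness of $\sqsubset_n$, any $y$ satisfying $x\sqsubset_n y$ for all $x\in D_{p,n}$ will automatically satisfy the same for every $x\in S_{p,n}$. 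For any generic $G$ with $p\in G$, the value $\dot h^G\in Y$ witnesses, absolutely, the statement
\[
\exists y\in Z:\ y\in Y\ \wedge\ \forall x\in D_{p,n}:\ x\sqsubset_n y
\]
in $V[G]$. This statement is $\Sigma^1_1$ with parameters---the countable sequence $D_{p,n}$ and codes for $Y$ and $\sqsubset_n$---all lying in $V$, so Shoenfield absoluteness will hand back a witness $y_{p,n}\in V$.

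The verification will then be straightforward. If $x$ is $\Rrm$-unbounded over $H$ but some $q\in\Por$ forces $x\sqsubset\dot h$, then strengthening $q$ to decide an index yields $r\leq q$ and $n<\omega$ with $r\Vdash x\sqsubset_n\dot h$; thus $x\in S_{r,n}\subseteq\overline{D_{r,n}}$, and a sequence $x_k\in D_{r,n}$ with $x_k\to x$ satisfies $x_k\sqsubset_n y_{r,n}$, which by closedness of $\sqsubset_n$ propagates to $x\sqsubset_n y_{r,n}$, contradicting $\Rrm$-unboundedness since $y_{r,n}\in H$. The ``in particular'' clause is immediate: $|\Cor|=\aleph_0<\aleph_1$, so the main part gives that $\Cor$ is $\aleph_1$-$\Rrm$-good, i.e.\ $\Rrm$-good. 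The hard part will be precisely the descent of $y_{p,n}$ from $V[G]$ to $V$: this is the one step that genuinely needs the full Polish structure of $\Rrm$, with closedness of each $\sqsubset_n$ powering the density reduction from $S_{p,n}$ to the countable $D_{p,n}$, and analyticity of $Y$ keeping the relevant existential statement at the $\Sigma^1_1$ level where Shoenfield absoluteness applies.
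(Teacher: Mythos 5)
Your argument is correct and is essentially the standard proof of this lemma (the paper only cites it from [CM]): decompose the set of $x$ with $\exists p\, (p\Vdash x\sqsubset\dot h)$ into the pieces $S_{p,n}=\{x: p\Vdash x\sqsubset_n\dot h\}$, and for each piece pull a single bound $y_{p,n}\in Y\cap V$ back to the ground model via a countable dense subset, closedness of the sections of $\sqsubset_n$, and $\Sigma^1_1$-absoluteness, so that $|H|\leq|\Por|\cdot\aleph_0<\theta$. All steps check out, including the verification and the ``in particular'' clause for Cohen forcing.
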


We now present the instances of Prs and the corresponding good posets that we use in our applications.
\newcommand{\sqce}{\sqsubset^{\star}}
\newcommand{\Ce}{\mathsf{Ce}}
\begin{example}\label{exa:c2}
\mbox{} 
\begin{enumerate}[label=\normalfont(\arabic*)]

    \item\label{exa:c2:1}  Define $\Omega_n:=\set{a\in [2^{<\omega}]^{<\aleph_0}}{\Lb(\bigcup_{s\in a}[s])\leq 2^{-n}}$ (endowed with the discrete topology) and put $\Omega:=\prod_{n<\omega}\Omega_n$ with the product topology, which is a perfect Polish space. For every $x\in \Omega$ denote 
     \[N_{x}:=\bigcap_{n<\omega}\bigcup_{m\geq n}\bigcup_{s\in x(m)}[s],\] which is clearly a Borel null set in $2^{\omega}$.
       
    Define the Prs $\Cn:=\la \Omega, 2^\omega, \sqsubset^{\rm n}\ra$ where $x\sqsubset^{\rm n} z$ iff $z\notin N_{x}$. Recall that any null set in $2^\omega$ is a subset of $N_{x}$ for some $x\in \Omega$, so $\Cn$ and $\Cbf_\Nwf^\perp$ are Tukey-Galois equivalent. Hence, $\bfrak(\Cn)=\cov(\Nwf)$ and $\dfrak(\Cn)=\non(\Nwf)$.
    
 Any $\mu$-centered poset is $\mu^+$-$\Cn$-good (\cite{Br}). In particular, $\sigma$-centered posets are $\Cn$-good.
    
    \item\label{exa:c2:2}
    The relational system $\Ed_b$ is Polish when $b=\seq{b(n)}{n<\omega}$ is a sequence of non-empty countable sets such that $|b(n)|\geq 2$ for infinitely many $n$.
    Consider $\Ed:=\la\baire,\baire,\neq^\infty\ra$. 
    By~\cite[Thm.~2.4.1 \& Thm.~2.4.7]{BJ} (see also~\cite[Thm.~5.3]{CMlocalc}),  $\bfrak(\Ed)=\non(\Mwf)$ and $\dfrak(\Ed)=\cov(\Mwf)$.
    
    \item\label{exa:c2:3} The relational system $\baire = \la\baire,\baire,\leq^{*}\ra$ is Polish.
     Any $\mu$-$\Fr$-linked poset is $\mu^+$-$\baire$-good (see~\autoref{lem:d11}).
    
    \item\label{exa:c2:4}  For each $k<\omega$, let $\id^k:\omega\to\omega$ such that $\id^k(i)=i^k$ for all $i<\omega$ and $\Hwf:=\set{\id^{k+1}}{k<\omega}$. Let $\Lc^*:=\la\baire, \Scal(\omega, \Hwf), \in^*\ra$ be the Polish relational system where \[\Swf(\omega, \Hwf):=\set{\varphi\colon \omega\to[\omega]^{<\aleph_0}}{\exists{h\in\Hwf}\, \forall{i<\omega}\colon|\varphi(i)|\leq h(i)},\]
     and recall that $x\in^*\varphi$ iff $\forall^\infty n\colon x(n)\in\varphi(n)$. As a consequence of~\cite[Thm.~2.3.9]{BJ} (see also~\cite[Thm.~4.2]{CMlocalc}), $\bfrak(\Lc^*)=\add(\Nwf)$ and $\dfrak(\Lc^*)=\cof(\Nwf)$.

     Any $\mu$-centered poset is $\mu^+$-$\Lc^*$-good (see~\cite{Br,JS}) so, in particular, $\sigma$-centered posets are $\Lc^*$-good. Besides,  Kamburelis~\cite{Ka} showed that any Boolean algebra with a strictly positive finitely additive measure is $\Lc^*$-good (in particular, any subalgebra of random forcing).

     \item For $b\in\omega^\omega$, $\Rrm_b$ is a Polish relational system when $b\geq^*2$ (see~\autoref{exa:a2}).

     \item\label{exa:c2:5} Let $\Mbf := \la 2^\omega,\Ior\times 2^\omega, \sqsubm \ra$ where
     \[x \sqsubm (I,y) \text{ iff }\forall^\infty n\colon x\frestr I_n \neq y\frestr I_n.\]
     This is a Polish relational system and $\Mbf\eqT \Cbf_\Mwf$ (by \autoref{ChM}).

     Note that, whenever $M$ is a transitive model of $\thzfc$, $c\in 2^\omega$ is a Cohen real over $M$ iff $c$ is $\Mbf$-unbounded over $M$.

   \item    Define the relational system $\Ce=\la \cantor,\mathsf{NE},\sqce\ra$ where $\mathsf{NE}$ is the collection of sequences $\bar T = \langle T_n \colon n<\omega \rangle$ such that each $T_n$ is a subtree of ${}^{<\omega}2$ (not necessarily well-pruned), $T_n\subseteq T_{n+1}$ and $\Lb([ T_n]) =0$, i.e.\ $\displaystyle \lim_{n\to\infty}\frac{|T\cap {}^n2|}{2^n}=0$, and $x\sqce \bar T$ iff $x\in [T_n]$ for some $n<\omega$. 
\end{enumerate}
\end{example}




Good posets are preserved along FS iterations as follows.

\begin{lemma}[{\cite[Sec.~4]{BCM2}}]\label{lem:c3}
Let $\seq{ \Por_\xi,\Qnm_\xi}{\xi<\pi}$ be a FS iteration such that, for $\xi<\pi$, $\Por_\xi$ forces that $\Qnm_\xi$ is a non-trivial $\theta$-cc $\theta$-$\Rrm$-good poset. 
Let $\set{\gamma_\alpha}{\alpha<\delta}$ be an increasing enumeration of $0$ and all limit ordinals smaller than $\pi$ (note that $\gamma_\alpha=\omega\alpha$), and for $\alpha<\delta$ let $\dot c_\alpha$ be a $\Por_{\gamma_{\alpha+1}}$-name of a Cohen real in $X$ over $V_{\gamma_\alpha}$. 

Then $\Por_\pi$ is $\theta$-$\Rrm$-good. Moreover, 
if $\pi\geq\theta$ then $\Cbf_{[\pi]^{<\theta}}\leqT\Rrm$, $\bfrak(\Rrm)\leq\theta$ and $|\pi|\leq\dfrak(\Rrm)$.
\end{lemma}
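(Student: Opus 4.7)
The plan is to split the argument into two blocks: (i) prove $\Por_\pi$ is $\theta$-$\Rrm$-good by transfinite induction on $\pi$; (ii) show that the sequence of Cohen reals added at the limit blocks forms a strongly $\theta$-$\Rrm$-unbounded family, from which the Tukey connection and both cardinal inequalities follow.

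\textbf{Goodness preservation (induction on $\pi$).} The successor step is the two-step lemma. Given a $\Por_{\xi+1}$-name $\dot h$ for a member of $Y$, work in the $\Por_\xi$-extension and apply $\theta$-$\Rrm$-goodness of $\dot\Qnm_\xi$ to obtain a $\Por_\xi$-name $\dot H'$ for a subset of $Y$ of size ${<}\theta$. Enumerate $\dot H'=\set{\dot y_\alpha}{\alpha<\lambda}$ with $\lambda<\theta$, apply the induction hypothesis to each $\dot y_\alpha$ to get $H_\alpha\subseteq Y$ in $V$ of size ${<}\theta$, and set $H:=\bigcup_{\alpha<\lambda}H_\alpha$, which has size ${<}\theta$ by regularity of $\theta$. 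For any $x\in X$ that is $\Rrm$-unbounded over $H$, it is forced that $x$ is $\Rrm$-unbounded over $\dot H'$, so $\Vdash x\not\sqsubset\dot h$. For the limit stage, exploit that $\Por_\pi$ is $\theta$-cc (as a FS iteration of $\theta$-cc forcings): when $\cf(\pi)\geq\theta$, a name $\dot h$ for an element of the Polish space ambient to $Y$ can be reflected into some $\Por_\xi$ with $\xi<\pi$, and the induction hypothesis finishes; when $\cf(\pi)<\theta$, use the decomposition $\sqsubset=\bigcup_n\sqsubset_n$ from \autoref{def:Prs}\ref{def:Prs3} and a cofinal sequence in $\pi$ to build $H$ as a small union of witnesses from the initial segments.

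\textbf{Strongly $\theta$-$\Rrm$-unbounded family of size $|\pi|$.} The claim is that $\set{\dot c_\alpha}{\alpha<\delta}$ is strongly $\theta$-$\Rrm$-unbounded in $V_\pi$. Fix a $\Por_\pi$-name $\dot y$ for a member of $Y$. The crucial observation is that each $\dot c_\alpha$ is Cohen over $V_{\gamma_\alpha}$, and by \autoref{def:Prs}\ref{def:Prs3} each set $(\sqsubset_n)^{y'}$ (for $y'\in Y\cap V_{\gamma_\alpha}$ and $n<\omega$) is closed nowhere dense in $X$; therefore $\dot c_\alpha$ is $\Rrm$-unbounded over $Y\cap V_{\gamma_\alpha}$. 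By goodness of $\Por_\pi$ obtain $H\subseteq Y$ in $V$ of size ${<}\theta$ witnessing goodness for $\dot y$; a $\theta$-cc support argument then locates $\dot y$ (up to $\sqsubset$-equivalence on each $\sqsubset_n$) as a $\Por_\xi$-name for some $\xi<\pi$ such that the set $\{\alpha\colon\gamma_\alpha<\xi\}$ has size ${<}\theta$. For $\alpha$ with $\gamma_\alpha\geq\xi$, $\dot y\in V_{\gamma_\alpha}$ and the previous paragraph gives $\dot c_\alpha\not\sqsubset\dot y$. Hence $\{\alpha<\delta\colon\dot c_\alpha\sqsubset\dot y\}$ has size ${<}\theta$.

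\textbf{Tukey connection and numerical consequences.} The maps $\alpha\mapsto\dot c_\alpha$ and $y\mapsto\set{\alpha<\delta}{\dot c_\alpha\sqsubset y}$ form a Tukey connection witnessing $\Cbf_{[\pi]^{<\theta}}\leqT\Rrm$. Since $\non([\pi]^{<\theta})=\theta$ and $\cov([\pi]^{<\theta})=|\pi|$ (using $|\pi|\geq\theta$), this gives $\bfrak(\Rrm)\leq\theta$ and $|\pi|\leq\dfrak(\Rrm)$.

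\textbf{Main obstacle.} The delicate point is the limit step of the goodness induction when $\cf(\pi)<\theta$, where names are not automatically reflected into an initial segment; there one has to use the Polish presentation of $\sqsubset$ explicitly together with the hypothesis that each $\sqsubset_n$ is closed to argue componentwise. A related subtlety is arranging, in the Cohen step, that the cutoff ordinal $\xi<\pi$ obtained from the support of $\dot y$ satisfies $|\{\alpha\colon\gamma_\alpha<\xi\}|<\theta$; this is where one uses that the sequence $\seq{\gamma_\alpha}{\alpha<\delta}$ enumerates the limit ordinals below $\pi$ and that the iteration is $\theta$-cc, so that the effective support of $\dot y$ can be absorbed into an initial segment of the iteration of cofinality ${<}\theta$.
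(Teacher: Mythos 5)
The paper itself offers no proof of this lemma (it is quoted from [BCM2, Sec.~4]), so I am judging your proposal against the standard argument it is citing. Your architecture is the right one and matches that source: an induction for goodness (two-step lemma at successors, reflection of names when $\cf(\pi)\geq\theta$, and the $\sqsubset_n$-decomposition when $\cf(\pi)<\theta$), followed by exhibiting the limit-stage Cohen reals as a strongly $\theta$-$\Rrm$-unbounded family. The first block is essentially correct, modulo the standing (and necessary) assumption that $\theta$ is regular uncountable, and modulo the small-cofinality limit case, which you correctly flag as the technical heart but only sketch.

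The genuine gap is in the second block. You assert that a $\theta$-cc support argument locates $\dot y$ as a $\Por_\xi$-name for some $\xi<\pi$ with $|\set{\alpha}{\gamma_\alpha<\xi}|<\theta$. That is false in general: the support of a name for a real has size ${<}\theta$, but its supremum $\xi$ can have cardinality up to $|\pi|$ (take $\theta=\aleph_1$, $\pi=\omega_2$ and a name using the coordinates $\omega_1\cdot n$ for $n<\omega$; then any admissible $\xi$ satisfies $|\set{\alpha}{\gamma_\alpha<\xi}|=\aleph_1=\theta$). Your argument then says nothing about the possibly $\geq\theta$ many indices $\alpha$ with $\gamma_\alpha<\xi$, and the set $H$ you extract from goodness of $\Por_\pi$ cannot control them, because goodness of $\Por_\pi$ is a statement about points of $X$ in the ground model $V$, whereas the $\dot c_\alpha$ live only in intermediate extensions. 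The standard repair requires an extra lemma and an induction along the iteration: first show that a $\theta$-$\Rrm$-good poset preserves strongly $\theta$-$\Rrm$-unbounded families of its ground model (given $\dot y$, take the witness $H$; all but ${<}\theta$ many members of the family are $\Rrm$-unbounded over $H$, by regularity of $\theta$, and those are forced $\not\sqsubset\dot y$); then prove by recursion on $\beta$ that $\set{\dot c_\alpha}{\alpha<\beta}$ is strongly $\theta$-$\Rrm$-unbounded in $V_{\gamma_\beta}$ and remains so later, using that each new $\dot c_\alpha$ is $\Rrm$-unbounded over $V_{\gamma_\alpha}$ by the nowhere-density clause of \autoref{def:Prs} and that the quotients of the iteration are again $\theta$-$\Rrm$-good over the intermediate models. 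With that in place, your Tukey connection $\alpha\mapsto\dot c_\alpha$, $y\mapsto\set{\alpha<\delta}{\dot c_\alpha\sqsubset y}$ and the resulting inequalities $\bfrak(\Rrm)\leq\theta$ and $|\pi|\leq\dfrak(\Rrm)$ go through as you state them.
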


To force a lower bound of $\bfrak(\Rrm)$, we use:

\begin{lemma}[{\cite[Thm.~2.12]{CM22}}]\label{lem:c4}
Let $\Rrm=\la X,Y,\sqsubset\ra$ be a Polish relational system, $\theta$ an uncountable regular cardinal, and let  $\Por_\pi=\seq{\Por_\xi,\Qnm_\xi}{\xi<\pi}$ be a FS iteration of $\theta$-cc posets with $\cf(\pi)\geq\theta$. Assume that, for all $\xi<\pi$ and any $A\in[X]^{<\theta}\cap V_\xi$, there is some $\eta\geq\xi$ such that $\Qnm_\eta$ adds an $\Rrm$-dominating real over $A$. Then $\Por_\pi$ forces $\theta\leq\bfrak(\Rrm)$, i.e.\ $\Rrm\leqT\Cbf_{[X]^{<\theta}}$.
\end{lemma}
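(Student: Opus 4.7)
The plan is to show that in $V_\pi := V^{\Por_\pi}$, every family $F \subseteq X$ with $|F| < \theta$ is $\Rrm$-bounded. This is exactly the assertion $\theta \leq \bfrak(\Rrm)$, and it is equivalent to $\Rrm \leqT \Cbf_{[X]^{<\theta}}$ via the Tukey connection consisting of the identity on $X$ paired with a map $[X]^{<\theta} \to Y$ selecting, for each subset of size $<\theta$, an $\Rrm$-bound in $Y$.

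So let $F \in V_\pi$ have $|F| < \theta$, and fix (in $V_\pi$) an enumeration $f := \langle x_\alpha : \alpha < \kappa \rangle$ of $F$ with $\kappa < \theta$. The main step is a reflection argument. Since $X$ is Polish, elements of $X$ are coded by reals, so each value $x_\alpha$ admits a nice $\Por_\pi$-name $\dot x_\alpha$; using that $\Por_\pi$ is $\theta$-cc (every maximal antichain has size $<\theta$) and a FS iteration (every condition has finite support), together with regularity of $\theta$ and the fact that a nice name for a real uses only countably many antichains, the support $S_\alpha \subseteq \pi$ of $\dot x_\alpha$ satisfies $|S_\alpha| < \theta$. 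Let $S := \bigcup_{\alpha < \kappa} S_\alpha$. Since $\kappa < \theta$ and each $|S_\alpha| < \theta$, regularity of $\theta$ gives $|S| < \theta$; since $\cf(\pi) \geq \theta$, the set $S$ is bounded in $\pi$ by some $\xi^* < \pi$. Consequently, each $\dot x_\alpha$ is (equivalent to) a $\Por_{\xi^*}$-name, the whole sequence $f$ admits a $\Por_{\xi^*}$-name, and hence $f \in V_{\xi^*}$, so $F \in [X]^{<\theta} \cap V_{\xi^*}$.

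Now apply the hypothesis to $A := F$: there exists $\eta \geq \xi^*$ such that $\Qnm_\eta$ adds an $\Rrm$-dominating real $\dot y$ over $A$, i.e., in $V_{\eta+1}$ its interpretation $y$ satisfies $x \sqsubset y$ for every $x \in A$. Since $V_{\eta+1} \subseteq V_\pi$, the element $y$ witnesses the $\Rrm$-boundedness of $F$ in $V_\pi$. As $F$ was an arbitrary member of $[X]^{<\theta}$ in $V_\pi$, $\Por_\pi$ forces $\theta \leq \bfrak(\Rrm)$, which is the desired conclusion.

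The principal obstacle is the reflection step: verifying that any subset of $X$ of cardinality $<\theta$ that appears in $V_\pi$ actually belongs to some intermediate model $V_{\xi^*}$ with $\xi^* < \pi$. This rests on the combined force of all the stated hypotheses — Polishness of $X$ (to reduce to nice names for reals), the $\theta$-cc (to keep antichains small), the FS structure (finite supports per condition), regularity of $\theta$ (to sum $<\theta$ many sets each of size $<\theta$), and $\cf(\pi) \geq \theta$ (to bound the resulting small support set inside $\pi$). Relaxing any single one of these would break the reflection, and with it the whole argument.
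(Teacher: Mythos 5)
Your proposal is correct and is exactly the standard argument behind this lemma; the paper itself gives no proof here (it cites \cite[Thm.~2.12]{CM22}), and the reflection step you describe --- nice names built from countably many antichains of size ${<}\theta$, finite supports, regularity of $\theta$, and $\cf(\pi)\geq\theta$ to land the family in some $V_{\xi^*}$, followed by an application of the dominating-real hypothesis --- is precisely how the cited result is established. The only point left tacit is the upward absoluteness of $x\sqsubset y$ from $V_{\eta+1}$ to $V_\pi$, which holds because $\sqsubset$ is $\boldsymbol{\Sigma}^0_2$ by the definition of a Polish relational system.
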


The following results illustrates the effect of adding cofinally many $\Rrm$-dominating reals along a FS iteration.

\begin{lemma}[{\cite[Lem.~2.9]{CM22}}]\label{lem:c5}
Let $\Rrm$ be a definable relational system of the reals, and let $\lambda$ be a limit ordinal of uncountable cofinality.
If $\Por_\lambda=\la\Por_\xi,\Qnm_\xi:\,  \xi<\lambda\ra$ is a FS iteration of $\cf(\lambda)$-cc posets that adds $\Rrm$-dominating reals cofinally often, then $\Por_\lambda$ forces $\Rrm\leqT\lambda$. 

In addition, if $\Rrm$ is a Prs and all iterands are non-trivial, then $\Por_\lambda$ forces $\Rrm\eqT\Mbf\eqT\lambda$.
In particular, $\Por_\lambda$ forces $\bfrak(\Rrm)=\dfrak(\Rrm)=\non(\Mwf)=\cov(\Mwf) =\cf(\lambda)$.
\end{lemma}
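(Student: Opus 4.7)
The plan is to establish the chain $\Mbf \leqT \Rrm \leqT \lambda \leqT \Mbf$ in $V[G_\lambda]$, from which both conclusions and the cardinal equalities will follow. For the first link $\Rrm \leqT \lambda$ (which needs only the definability and the cofinally-often hypothesis, not the Prs assumption), the key observation is that $\Por_\lambda$, as a FS iteration of $\cf(\lambda)$-cc posets of length $\lambda$, is itself $\cf(\lambda)$-cc, so every real in $V_\lambda := V^{\Por_\lambda}$ appears in some bounded intermediate extension $V_\xi$ with $\xi < \lambda$. I would set
\[\Psi_-(x) := \min\{\xi<\lambda : x \in V_\xi\}, \qquad \Psi_+(\eta) := y_\eta,\]
where $y_\eta \in V_\lambda$ is an $\Rrm$-dominating real over $V_\eta$ added by some iterand $\Qnm_{\xi(\eta)}$ with $\xi(\eta) \geq \eta$, which exists by hypothesis. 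If $\Psi_-(x) \leq \eta$, then $x \in V_\eta$, so $x \sqsubset y_\eta$, confirming $\Rrm \leqT \lambda$.

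Under the additional hypotheses that $\Rrm$ is a Prs and all iterands are non-trivial, \autoref{Prsremark} directly gives $\Cbf_{\Mwf(X)} \leqT \Rrm$, and since $\Mbf \eqT \Cbf_\Mwf$ via \autoref{ChM}, I obtain $\Mbf \leqT \Rrm$ in $V_\lambda$. For the final link $\lambda \leqT \Mbf$, I would exploit the standard fact that in a FS iteration of non-trivial ccc posets, a Cohen real over $V_\beta$ is added at every limit stage of countable cofinality above $\beta$. As $\cf(\lambda)$ is uncountable, I can choose a strictly increasing cofinal sequence $\seq{\mu_\alpha}{\alpha < \cf(\lambda)}$ of limit ordinals of countable cofinality below $\lambda$, producing Cohen reals $c_\alpha \in V_{\mu_\alpha+1}$ over $V_{\mu_\alpha}$. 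Define
\[\Psi_-'(\alpha) := c_\alpha, \qquad \Psi_+'((I,y)) := \min\{\alpha < \cf(\lambda) : (I,y) \in V_{\mu_\alpha}\}.\]
If $c_\alpha \sqsubm (I,y)$, then $c_\alpha \in B_{y,I}$, a meager Borel set coded in $V_{\mu_{\Psi_+'((I,y))}}$; but for $\alpha > \Psi_+'((I,y))$, $c_\alpha$ is Cohen over this model, contradicting the meagerness. Hence $\alpha \leq \Psi_+'((I,y))$, witnessing $\cf(\lambda) \leqT \Mbf$, equivalently $\lambda \leqT \Mbf$ (since $\lambda \eqT \cf(\lambda)$ as directed orders).

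Chaining these three reductions gives $\Rrm \eqT \Mbf \eqT \lambda$ in $V_\lambda$, and the ``in particular'' clause follows by extracting $\bfrak$ and $\dfrak$ and using $\bfrak(\lambda) = \dfrak(\lambda) = \cf(\lambda)$ together with $\bfrak(\Mbf) = \non(\Mwf)$ and $\dfrak(\Mbf) = \cov(\Mwf)$ (from $\Mbf \eqT \Cbf_\Mwf$). The main subtlety lies in the Cohen-reals-at-limits step: one must ensure that non-triviality of each iterand in $V_\beta$ genuinely forces a Cohen real at every countable-cofinality limit above $\beta$, and this should be stated for $\cf(\lambda)$-cc (rather than merely ccc) iterands; a secondary bookkeeping point is checking that $\Por_\lambda$ is itself $\cf(\lambda)$-cc to justify the opening reflection step in both Tukey reductions that invoke $\min\{\xi<\lambda : \cdot \in V_\xi\}$.
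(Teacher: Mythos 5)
The paper does not prove this lemma at all --- it is quoted from \cite[Lem.~2.9]{CM22} as a black box --- so there is no in-paper proof to compare against; your argument is, however, the standard proof of this statement and it is essentially correct. The decomposition $\Mbf \leqT \Rrm \leqT \lambda \leqT \Mbf$ is the right one, and the reflection step is justified: a FS iteration of $\cf(\lambda)$-cc posets is $\cf(\lambda)$-cc (regularity of $\cf(\lambda)$ plus the $\Delta$-system lemma for finite supports), so a nice name for a real involves fewer than $\cf(\lambda)$ conditions and hence lives in some $V_\xi$ with $\xi<\lambda$, which makes both occurrences of $\min\{\xi : \cdot\in V_\xi\}$ legitimate. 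The one imprecision worth fixing is in the Cohen-real step: the Cohen real produced at a limit stage $\mu_\alpha$ of countable cofinality lies in $V_{\mu_\alpha}$ and is Cohen over $V_\gamma$ for every $\gamma<\mu_\alpha$; it is \emph{not} an element of $V_{\mu_\alpha+1}$ Cohen over $V_{\mu_\alpha}$, since that would require the iterand $\Qnm_{\mu_\alpha}$ itself to add a Cohen real, which is not assumed. This does not damage the argument: if $\alpha>\beta=\Psi_+'((I,y))$ then $(I,y)\in V_{\mu_\beta}$ with $\mu_\beta<\mu_\alpha$, so Cohen-ness of $c_\alpha$ over $V_{\mu_\beta}$ already yields $c_\alpha\notin B_{y,I}$ (alternatively, re-index and let $c_\alpha$ be the Cohen real added at the limit $\mu_\alpha+\omega$, which is Cohen over $V_{\mu_\alpha}$). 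Everything else --- the use of \autoref{Prsremark} together with $\Mbf\eqT\Cbf_{\Mwf}$ for the link $\Mbf\leqT\Rrm$, the identity $\bfrak(\lambda)=\dfrak(\lambda)=\cf(\lambda)$, and the extraction of the four cardinal equalities --- is correct.
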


Next, we illustrate the effect of iterating $\Por_b$ on Cicho\'n's diagram.

\begin{theorem}\label{thm:d9}
Let $\pi$ be an ordinal of uncountable cofinality such that $|\pi|^{\aleph_0}=|\pi|$. The FS iteration of $\Por$ of length $\pi$ (i.e. the FS iteration $\la\Por_\alpha,\Qnm_\alpha:\, \alpha<\pi\ra$ where each $\Qnm_\alpha$ is a $\Por_\alpha$-name of $\Por_b$) forces $\cfrak=|\pi|$, $ \Crm_\Mwf\eqT\pi$ and $ \Crm_\Nwf^\perp\eqT\omega^\omega\eqT\Cbf_{[\R]^{<\aleph_1}}$. In particular, it forces $\cov(\Nwf)=\bfrak=\aleph_1$, $\bfrak_b^\eq=\non(\Mwf)=\cov(\Mwf)=\dfrak_b^\eq=\cf(\pi)$ and $\non(\Nwf)=\dfrak=\cfrak=|\pi|$ (see~\autoref{Figthm:d9}).
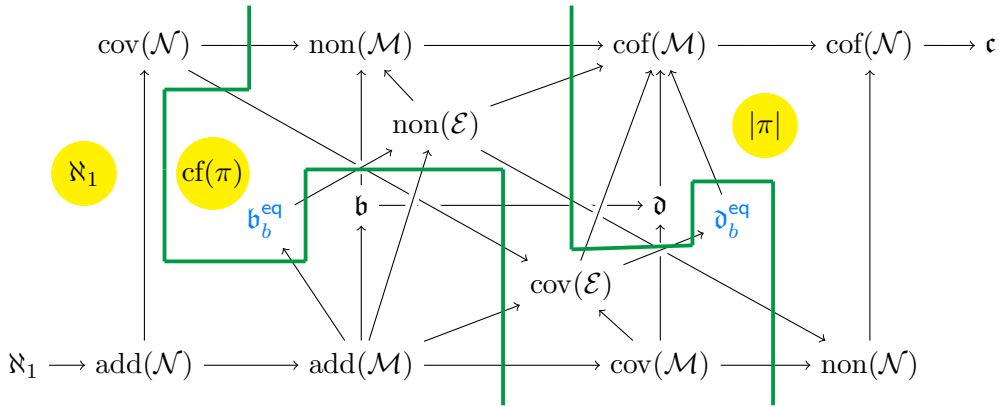
\begin{figure}[ht!]
\centering
\begin{tikzpicture}[scale=1.06]
\small{
\node (aleph1) at (-1,3) {$\aleph_1$};
\node (addn) at (0.5,3){$\add(\Nwf)$};
\node (covn) at (0.5,7){$\cov(\Nwf)$};
\node (nonn) at (9.5,3) {$\non(\Nwf)$} ;
\node (cfn) at (9.5,7) {$\cof(\Nwf)$} ;
\node (addm) at (3.19,3) {$\add(\Mwf)$} ;
\node (covm) at (6.9,3) {$\cov(\Mwf)$} ;
\node (nonm) at (3.19,7) {$\non(\Mwf)$} ;
\node (cfm) at (6.9,7) {$\cof(\Mwf)$} ;
\node (b) at (3.19,5) {$\bfrak$};
\node (d) at (6.9,5) {$\dfrak$};
\node (c) at (11,7) {$\cfrak$};
\node (e) at (2,4.8) {\dodger{$\bfrak_b^\eq$}};
\node (deq) at (7.8,4.8) {\dodger{$\dfrak_b^\eq$}};
\node (none) at (4.12,6) {$\non(\Ewf)$};
\node (cove) at (5.8,4) {$\cov(\Ewf)$};
\draw (aleph1) edge[->] (addn)
      (addn) edge[->] (covn)
      (covn) edge [->] (nonm)
      (nonm)edge [->] (cfm)
      (cfm)edge [->] (cfn)
      (cfn) edge[->] (c);

\draw
   (addn) edge [->]  (addm)
   (addm) edge [->]  (covm)
   (covm) edge [->]  (nonn)
   (nonn) edge [->]  (cfn);
\draw (addm) edge [->] (b)
      (b)  edge [->] (nonm);
\draw (covm) edge [->] (d)
      (d)  edge[->] (cfm);
\draw (b) edge [->] (d);

\draw   (none) edge [->] (nonm)
        (none) edge [->] (cfm)
        (addm) edge [->] (cove);
      
\draw (none) edge [line width=.15cm,white,-] (nonn)
      (none) edge [->] (nonn);
      
\draw (cove) edge [line width=.15cm,white,-] (covn)
      (cove) edge [<-] (covm)
      (cove) edge [<-] (covn);

\draw (addm) edge [line width=.15cm,white,-] (none)
      (addm) edge [->] (none); 

\draw (cove) edge [line width=.15cm,white,-] (cfm)
      (cove) edge [->] (cfm);

\draw (e) edge [line width=.15cm,white,-] (none)
      (e) edge [->] (none);   

\draw (e) edge [line width=.15cm,white,-] (addm)
      (e) edge [<-] (addm);  


\draw (deq) edge [line width=.15cm,white,-] (cove)
      (deq) edge [<-] (cove);  

\draw (deq) edge [line width=.15cm,white,-] (cfm)
      (deq) edge [->] (cfm);  

\draw[color=sug,line width=.05cm] (0.75,4.3)--(2.5,4.3); 
\draw[color=sug,line width=.05cm] (2.5,5.45)--(4.95,5.45);  
\draw[color=sug,line width=.05cm] (0.75,6.45)--(1.8,6.45);   
\draw[color=sug,line width=.05cm] (5.8,4.45)--(5.8,7.5); 

\draw[color=sug,line width=.05cm] (7.3,4.5)--(7.3,5.3); 

\draw[color=sug,line width=.05cm] (8.3,2.5)--(8.3,5.3); 
\draw[color=sug,line width=.05cm] (4.95,2.5)--(4.95,5.45);
\draw[color=sug,line width=.05cm] (1.8,6.45)--(1.8,7.5);
\draw[color=sug,line width=.05cm] (0.75,5.45)--(0.75,6.45); 
\draw[color=sug,line width=.05cm] (0.75,4.3)--(0.75,5.45);
\draw[color=sug,line width=.05cm] (2.5,4.3)--(2.5,5.45);

\draw[color=sug,line width=.05cm] (5.8,4.45)--(7.3,4.5);
\draw[color=sug,line width=.05cm] (7.3,5.3)--(8.3,5.3);

\draw[circle, fill=yellow,color=yellow] (1.35,5.4) circle (0.45);
\draw[circle, fill=yellow,color=yellow] (-0.25,5.4) circle (0.4);
\draw[circle, fill=yellow,color=yellow] (8.2,6) circle (0.4);
\node at (8.2,6) {$|\pi|$};
\node at (1.35,5.4) {$\cf(\pi)$};
\node at (-0.25,5.4) {$\aleph_1$};
}
\end{tikzpicture}
\caption{Cicho\'n's diagram after adding $\pi$-many generic reals with $\Por_b$, where $\pi$ has uncountable cofinality and $|\pi|^{\aleph_0}=|\pi|$.}
\label{Figthm:d9}
\end{figure}
\end{theorem}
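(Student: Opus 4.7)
The plan is to exploit three features of $\Por_b$ in combination: it is $\sigma$-centered (hence ccc, and $\Cn$-good by \autoref{exa:c2}\ref{exa:c2:1}); it is $\sigma$-$\Fr$-linked (hence $\baire$-good by \autoref{lem:d11}); and its generic pair $(J_\gen,h_\gen)$ is $\Rrm_b$-dominating over the ground model, as recorded just below \autoref{def_Pb}. Since $\Por_b$ is ccc, the FS iteration $\Por_\pi$ is ccc and preserves cardinals, and the standard bookkeeping argument, using $|\pi|^{\aleph_0}=|\pi|$, forces $\cfrak=|\pi|$.

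For the outer columns of Cicho\'n's diagram, I would apply \autoref{lem:c3} with $\theta=\aleph_1$ to both $\Rrm=\Cn$ and $\Rrm=\baire$, using the goodness observations above together with the fact that FS iterations of non-trivial ccc posets produce Cohen reals at every limit stage of countable cofinality (which is meaningful here because $\cf(\pi)>\omega$, so cofinally many such stages exist). This forces $\cov(\Nwf)\leq\aleph_1$, $|\pi|\leq\non(\Nwf)$, $\bfrak\leq\aleph_1$, and $|\pi|\leq\dfrak$ in the extension. Combined with the ZFC bounds $\aleph_1\leq\cov(\Nwf),\bfrak$ and $\non(\Nwf),\dfrak\leq\cfrak=|\pi|$, this yields $\cov(\Nwf)=\bfrak=\aleph_1$ and $\non(\Nwf)=\dfrak=|\pi|$, which is precisely the content of $\Crm_\Nwf^\perp\eqT\omega^\omega\eqT\Cbf_{[\R]^{<\aleph_1}}$.

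For the middle column, I would use that each $\Por_b$-generic pair is $\Rrm_b$-dominating over the intermediate model, so the iteration adds $\Rrm_b$-dominating reals cofinally often. Since $\Rrm_b$ is a Polish relational system (\autoref{exa:c2}) and each iterand is non-trivial, \autoref{lem:c5} forces $\Rrm_b\eqT\Mbf\eqT\pi$, which yields $\bfrak_b^\eq=\dfrak_b^\eq=\non(\Mwf)=\cov(\Mwf)=\cf(\pi)$ and, in particular, $\Crm_\Mwf\eqT\pi$. The main obstacle I anticipate is organizational rather than technical: the lower bound $|\pi|\leq\non(\Nwf),\dfrak$ from goodness preservation must persist through all $\pi$ many iterands, while the middle values must be pinned exactly to $\cf(\pi)$ by the cofinal addition of $\Rrm_b$-dominating reals without inflating $\bfrak$ or $\cov(\Nwf)$; the two requirements are compatible precisely because $\aleph_1\leq\cf(\pi)\leq|\pi|$, so none of the resulting inequalities in Cicho\'n's diagram is violated.
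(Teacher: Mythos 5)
Your proposal is correct, and it runs on exactly the toolkit the paper itself relies on: $\sigma$-centredness of $\Por_b$ for $\Cn$-goodness, $\sigma$-$\Fr$-linkedness (via \autoref{lem:b11} and \autoref{lem:d11}) for $\baire$-goodness, the preservation lemma \autoref{lem:c3} for the outer values, and \autoref{lem:c5} applied to the cofinally added $\Rrm_b$-dominating generics for the middle column. The only organizational difference is that the paper dispatches this theorem in one line as a consequence of \autoref{thm:a6} (i.e.\ \autoref{thm:c2}), whose proof is built from the same three ingredients you isolate; your write-up is in effect the specialization of that argument to the pure $\Por_b$-iteration, which is arguably closer to the literal statement (the iteration in \autoref{thm:c2} interleaves restricted copies of $\LOCor$, $\Bor$ and $\Dor$, so reading \autoref{thm:d9} off from it requires collapsing parameters). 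One small point worth tightening: the Tukey equivalences $\Crm_\Nwf^\perp\eqT\omega^\omega\eqT\Cbf_{[\R]^{<\aleph_1}}$ are formally stronger than the cardinal equalities $\cov(\Nwf)=\bfrak=\aleph_1$ and $\non(\Nwf)=\dfrak=|\pi|$. \autoref{lem:c3} gives the direction $\Cbf_{[\pi]^{<\aleph_1}}\leqT\Rrm$ for $\Rrm\in\{\Cn,\baire\}$; the converse $\Rrm\leqT\Cbf_{[\pi]^{<\aleph_1}}$ comes from the $\aleph_1$-$\Rrm$-goodness of the whole iteration together with $\cf([|\pi|]^{<\aleph_1})=|\pi|$ (which is where $|\pi|^{\aleph_0}=|\pi|$ enters), so you should invoke that rather than asserting the equivalence is ``precisely'' the cardinal computation.
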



The proof of the above theorem follows from~\autoref{thm:a6}, so we will now prove~\autoref{thm:a6}.

\begin{theorem}\label{thm:c2}
    Let $\aleph_1\leq \lambda_1\leq \lambda_2\leq \lambda_3 \leq \lambda_4$ be regular cardinals, and assume $\lambda_5$ is a cardinal such that 
  $\lambda_5>\lambda_4$ and $\lambda_5= \lambda_5^{\aleph_0}$ and $\cf([\lambda_5]^{<\lambda_i}) = \lambda_5$ for $i=1,\ldots,4$. Then, we can construct a FS iteration of length $\lambda$ of ccc posets forcing  $\cfrak = \lambda _5$, $\Lc^*\eqT\Cbf_{[\lambda _5]^{<{\lambda _1}}}$,  $ \Crm_{\Nwf}^{\perp}\eqT\Cbf_{[\lambda _5]^{<{\lambda _3}}}$, $\baire\eqT\Cbf_{[\lambda _5]^{<{\lambda _4}}}$, and $\Rrm_b\eqT\Mbf\eqT\lambda_4$. In particular, it forced~\autoref{Figthm:a6}.
\end{theorem}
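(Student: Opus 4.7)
The plan is to construct a FS iteration $\la \Por_\alpha, \Qnm_\alpha : \alpha < \lambda_5 \ra$ of ccc posets with iterands drawn from a palette chosen to control each target invariant separately. The natural palette consists of: a localization forcing $\Lc(\omega,\Hwf)$ (to raise $\add(\Nwf)$ to $\lambda_1$), random forcing $\Bor$ (to raise $\cov(\Nwf)$ to $\lambda_3$), and $\Por_b$ (to raise both $\bfrak$ and $\bfrak_b^\eq$ to $\lambda_4$). The assumption $\lambda_5 = \lambda_5^{\aleph_0}$ gives a standard enumeration of all $\Por_\alpha$-nice names for reals, and the hypotheses $\cf([\lambda_5]^{<\lambda_i}) = \lambda_5$ allow us to schedule the iterands so that every $<\lambda_i$-sized subfamily of the relevant reals appearing along the way is covered cofinally in $\lambda_5$.

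For the lower bounds I would apply~\autoref{lem:c4} in each coordinate: localization iterands add $\Lc^*$-dominating reals, random iterands add $\Cn^\perp$-dominating reals (random reals avoid every small family of Borel null sets), and $\Por_b$-iterands add both $\Rrm_b$-dominating and $\leq^*$-dominating reals over every small ground-model set, yielding $\lambda_1 \leq \add(\Nwf)$, $\lambda_3 \leq \cov(\Nwf)$, $\lambda_4 \leq \bfrak$, and $\lambda_4 \leq \bfrak_b^\eq$. For the reverse Tukey-domination provided by~\autoref{lem:c3}, every iterand must be $\theta$-$\Rrm$-good for the appropriate $(\theta,\Rrm)$: $\sigma$-centered posets (which include $\Lc(\omega,\Hwf)$ and $\Por_b$) are $\Lc^*$-good and $\Cn$-good by~\autoref{exa:c2}; random forcing is $\Lc^*$-good by~\autoref{exa:c2}~\ref{exa:c2:4}; and $\Por_b$ is $\sigma$-$\Fr$-linked by~\autoref{lem:b11}, hence $\baire$-good by~\autoref{lem:d11}. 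Combining these with the iteration length $\lambda_5$ delivers $\Lc^* \eqT \Cbf_{[\lambda_5]^{<\lambda_1}}$, $\Crm_\Nwf^\perp \eqT \Cbf_{[\lambda_5]^{<\lambda_3}}$, $\baire \eqT \Cbf_{[\lambda_5]^{<\lambda_4}}$, and $\cfrak = \lambda_5$.

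For the remaining equivalence $\Rrm_b \eqT \Mbf \eqT \lambda_4$, the plan is to arrange the $\Por_b$-iterands so that they contribute $\Rrm_b$-dominating reals cofinally along a $\lambda_4$-indexed skeleton $\{\xi_\beta : \beta < \lambda_4\}$ of the iteration rather than cofinally in $\lambda_5$. The sub-iteration $\Por^* := \Por_{\sup_\beta \xi_\beta}$ then qualifies for~\autoref{lem:c5} with $\lambda = \lambda_4$, and because $\Rrm_b$ is a Prs and $\Cbf_\Mwf \leqT \Rrm_b$ by~\autoref{lem:b0}, this forces $\Rrm_b \eqT \Mbf \eqT \lambda_4$ in the $\Por^*$-extension. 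The iterands beyond $\Por^*$ must then be $\Rrm_b$-good and add no further $\Rrm_b$-dominating reals so as to preserve this Tukey equivalence, giving $\non(\Mwf) = \cov(\Mwf) = \bfrak_b^\eq = \dfrak_b^\eq = \lambda_4$ in the final model.

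The principal obstacle is precisely the balancing act in the previous paragraph: ensuring that $\Rrm_b$-dominating reals accumulate with the exact Tukey cofinal type $\lambda_4$ rather than $\lambda_5$, while the four goodness conditions for $(\Lc^*, \Cn, \baire, \Rrm_b)$ are simultaneously preserved. A naive scheduling that inserts $\Por_b$-iterands cofinally in $\lambda_5$ would, via~\autoref{lem:c5}, instead force $\Rrm_b \eqT \lambda_5$ and collapse the intended $\bfrak_b^\eq = \dfrak_b^\eq = \lambda_4$. Resolving this tension calls for the matrix/template iteration machinery developed in~\cite{CMR2} (with the preservation technology of~\cite{CM22}), together with an explicit verification that no iterand in the post-$\Por^*$ portion of the iteration destroys the Tukey equivalence $\Rrm_b \eqT \lambda_4$.
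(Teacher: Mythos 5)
Your overall shape (book-keeping plus goodness, with \autoref{lem:c3}, \autoref{lem:c4} and \autoref{lem:c5} doing the work) matches the paper, but there are two genuine gaps, and the one you flag as the ``principal obstacle'' has a much simpler resolution than the template machinery you invoke. The paper does \emph{not} iterate for length $\lambda_5$: it iterates for length $\lambda=\lambda_5\lambda_4$ (ordinal product), which has cardinality $\lambda_5$ but cofinality $\lambda_4$, and places one copy of $\Por_b$ at the start of each block $[\lambda_5\rho,\lambda_5(\rho+1))$ for $\rho<\lambda_4$. Then \autoref{lem:c5} applies to the whole iteration and yields $\Rrm_b\eqT\Mbf\eqT\lambda\eqT\lambda_4$ directly. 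Your skeleton-plus-preservation scheme inside a length-$\lambda_5$ iteration cannot work in general: in any FS iteration of non-trivial ccc posets whose length has uncountable cofinality $\kappa$, the Cohen reals added at limit stages force $\cov(\Mwf)\geq\kappa$ and $\non(\Mwf)\leq\kappa$, and by \autoref{lem:b0} this gives $\dfrak_b^\eq\geq\cov(\Mwf)\geq\cf(\lambda_5)$ regardless of how well-behaved the iterands past your skeleton are. Since $\cf(\lambda_5)$ need not equal $\lambda_4$, the obstruction is the limit stages themselves, not the iterands, and no choice of ``$\Rrm_b$-good tail'' repairs it; only fixing the cofinality of the length does.

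The second gap is your palette. You propose full localization forcing and full random forcing, but these are not good for the relational systems they are supposed to leave small: full $\LOCor$ adds a slalom localizing \emph{all} ground-model reals (an $\Lc^*$-dominating object over the whole ground model), and the random real lies outside every ground-model null set, so no set $H$ of size ${<}\lambda_3$ can witness $\lambda_3$-$\Cn$-goodness when $\lambda_3\leq\non(\Nwf)^V$. Used cofinally, these iterands would push $\add(\Nwf)$ and $\cov(\Nwf)$ up to the cofinality of the iteration rather than pinning them at $\lambda_1$ and $\lambda_3$. The paper instead uses the \emph{restricted} posets $\LOCor^{\dot N_\xi}$, $\Bor^{\dot N_\xi}$, $\Dor^{\dot N_\xi}$ over transitive models $\dot N_\xi$ of size ${<}\lambda_i$: smallness makes them $\lambda_i$-$\Rrm$-good for every Prs $\Rrm$ by \autoref{lem:c2}, while the book-keeping over $[\lambda_5]^{<\lambda_i}$ (this is where $\cf([\lambda_5]^{<\lambda_i})=\lambda_5$ is used) supplies the matching lower bounds via \autoref{lem:c4}. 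A related slip: you claim $\Por_b$ adds $\leq^*$-dominating reals and credit it with forcing $\lambda_4\leq\bfrak$, but $\Por_b$ is $\sigma$-$\Fr$-linked (\autoref{lem:b11}), hence $\baire$-good by \autoref{lem:d11}, so it adds no dominating reals; in the paper $\bfrak$ is raised by the restricted Hechler iterands, and $\cov(\Mwf)=\lambda_4$ (hence $\bfrak\le\non(\Mwf)=\lambda_4$) comes from the cofinality of the iteration, not from $\Por_b$ adding dominating reals.
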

\begin{proof}
Make a FS iteration $\Por=\la\Por_\xi,\Qnm_\xi:\,  \xi<\lambda\ra$ of length $\lambda:=\lambda_5\lambda_4$ (ordinal product) as follows. Fix a partition $\la C_i:\,  1\leq i\leq 3\ra$ of $\lambda_5\smallsetminus\{0\}$ where each set has size $\lambda_5$. For each $\rho<\lambda_4$ denote $\eta_\rho:=\lambda_5\rho$. We define the iteration at each $\xi=\eta_\rho+\varepsilon$ for $\rho<\lambda_4$ and $\varepsilon<\lambda_5$ as follows:

\[\Qnm_\xi:=\left\{\begin{array}{ll}
        \Por_b   & \text{if $\varepsilon=0$,}\\
        \LOCor^{\dot N_\xi}  & \text{if $\varepsilon\in C_1$,} \\
        \Bor^{\dot N_\xi} & \text{if $\varepsilon\in C_2$,} \\ 
        \Dor^{\dot N_\xi} & \text{if $\varepsilon\in C_3$,}
    \end{array}\right.\]
where $\dot N_\xi$ is a $\Por_{\xi}$-name of a transitive model of $\thzfc$ of size ${<}\lambda_i$ when $\varepsilon\in C_i$. 

Additionally, by a book-keeping argument, we make sure that all such models $N_\xi$ are constructed such that, for any $\rho<\lambda_4$:
\begin{enumerate}[label=\rm$(\bullet_\arabic*)$]
    \item\label{thm:c2:1} if $A\in V_{\eta_\rho}$ is a subset of $\omega^\omega$ of size ${<}\lambda_1$, then there is some $\varepsilon\in C_1$ such that  $A\subseteq N_{\eta_\rho+\varepsilon}$;
    
     \item if $A\in V_{\eta_\rho}$ is a subset of $\Omega$ of size ${<}\lambda_2$, then there is some $\varepsilon\in C_2$ such that  $A\subseteq N_{\eta_\rho+\varepsilon}$; and

    \item if $A\in V_{\eta_\rho}$ is a subset of $\omega^\omega$ of size ${<}\lambda_3$, then there is some $\varepsilon\in C_3$ such that $A\subseteq N_{\eta_\rho+\varepsilon}$.   
\end{enumerate}

We prove that $\Por$ is as required. Clearly, $\Por$ forces $\cfrak=\lambda_5$.

Notice first that all iterands are $\lambda_1$-$\Lc^*$-good (see \autoref{lem:c2} and \autoref{exa:c2}~\ref{exa:c2:4}), $\lambda_2$-$\Crm_{\Nwf}^{\perp}$-good (see \autoref{lem:c2} and \autoref{exa:c2}~\ref{exa:c2:1}) and $\lambda_3$-$\baire$-good (see \autoref{lem:c2}, \autoref{lem:b11}, and \autoref{lem:d11}), so by~\autoref{lem:c3} we obtain~$\Por$ forces $\Cbf_{[\lambda _5]^{<{\lambda _1}}}\leqT\Lc^*$, $\Cbf_{[\lambda _5]^{<{\lambda _1}}}\leqT\Crm_{\Nwf}^{\perp}$ and $\Cbf_{[\lambda _5]^{<{\lambda _1}}}\leqT\baire$. On the other hand, by using~\ref{thm:c2:1} and~\autoref{lem:c4}, $\Por$ forces $\Lc^*\leqT\Cbf_{[\lambda _5]^{<{\lambda _1}}}$. 

In a similar way to the previous argument, $\Por$ forces $\Crm_{\Nwf}^{\perp}\leqT\Cbf_{[\lambda _5]^{<{\lambda _3}}}$, $\baire\leqT\Cbf_{[\lambda _5]^{<{\lambda _4}}}$. 

Finally, since $\cf(\lambda)=\lambda_4$, by \autoref{lem:c5} $\Por$ forces $\Rrm_b\eqT\Mbf\eqT\lambda_4$ because $\Por_b$ adds $\Rrm_b$-dominating reals. 
\end{proof}

According to the preceding theorem, $\bfrak_b^\eq>\cov(\Nwf)$ is consistent. However, what about $\cov(\Nwf)>\bfrak_b^\eq$? We then give a positive answer to this question.

A notion proceeding ultrafilter-limits, which is more powerful, is finitely additive measures
(fams)-limits introduced implicitly in the proof of the consistency of $\cf(\cov(\Nwf )) = \omega$ by
Shelah~\cite{ShCov} and was formalized in~\cite{KST}. Recently, the author refined this in general setting along with Mej\'ia, and Uribe-Zapata~\cite{CMU}.

\begin{definition}[{\cite{CMU}}]
    Let $\Por$ be a poset and let $\Xi\colon \pts(\omega) \to [0,1]$ be a fam (with $\Xi(\omega)=1$ and $\Xi(\{n\})=0$ for all $n<\omega$), $I = \la I_n\colon n<\omega\ra$ be a partition of $\omega$ into finite sets, and $\varepsilon>0$.
    \begin{enumerate}[label = \normalfont (\arabic*)]

        \item\label{faml1} A set $Q\subseteq\Por$ is \emph{$(\Xi,I,\varepsilon)$-linked} if there is a function $\lim\colon Q^\omega\to \Por$ and a $\Por$-name $\dot \Xi'$ of a fam on $\pts(\omega)$ extending $\Xi$ such that, for any $\bar p = \seq{ p_\ell}{\ell<\omega} \in Q^\omega$,
        \[\lim \bar p \Vdash \int_\omega \frac{|\set{\ell \in I_k}{p_\ell \in \dot G}|}{|I_k|}d\dot \Xi' \geq 1-\varepsilon.\]

        \item\label{faml2} The poset $\Por$ is \emph{$\mu$-$\mathsf{FAM}$-linked}, witnessed by $\la Q_{\alpha,\varepsilon}\colon \alpha<\mu,\ \varepsilon\in(0,1)\cap \Q\ra$, if:
        \begin{enumerate}[label = \rm (\roman*)]
            \item Each $Q_{\alpha,\varepsilon}$ is $(\Xi,I,\varepsilon)$-linked for any $\Xi$ and $I$.
            \item For $\varepsilon\in(0,1)\cap \Q$, $\bigcup_{\alpha<\omega} Q_{\alpha,\varepsilon}$ is dense in $\Por$.
        \end{enumerate}

        \item\label{faml3} The poset $\Por$ is \emph{uniformly $\mu$-$\mathsf{FAM}$-linked} if there is some $\la Q_{\alpha,\varepsilon}\colon \alpha<\mu,\ \varepsilon\in(0,1)\cap \Q\ra$ as above, such that in~\ref{faml1} the name $\dot \Xi'$ only depends on $\Xi$ (and not on any $Q_{\alpha,\varepsilon}$).
    \end{enumerate}
\end{definition}

\begin{example}\label{exa:b2}
    \ 
    \begin{enumerate}[label = \normalfont (\arabic*)]
        \item\label{exa:b2:0} Any singleton is $(\Xi,I,\varepsilon)$-linked. Hence, any poset $\Por$ is uniformly $|\Por|$-$\mathsf{FAM}$-linked. In particular, Cohen forcing is uniformly $\sigma$-$\mathsf{FAM}$-linked.

        \item\label{exa:b2:1} Shelah~\cite{ShCov} proved implicitly that random forcing is uniformly $\sigma$-$\mathsf{FAM}$-linked.        
        More generally, any measure algebra of Maharam type $\mu$ is uniformly $\mu$-$\mathsf{FAM}$-linked~\cite{MUR23}.

        \item\label{exa:b2:3} The creature ccc forcing from~\cite{HSh}  adding eventually different reals is (uniformly) $\sigma$-$\mathsf{FAM}$-linked. This is proved in~\cite{KST},      witmore general setting in~\cite{M24Anatomy}.
    \end{enumerate}
\end{example}

The author with Mej\'ia proved that fam-limits below help to control $\non(\Ewf)$. Concretely, they proved:

\begin{lemma}[{\cite{CMU}}]\label{lem:d6}
$\sigma$-$\mathsf{FAM}$-linked posets  are  $\Ce$-good.
\end{lemma}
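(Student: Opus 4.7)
The plan is to construct, for each $\alpha<\omega$ and each $\varepsilon\in(0,1)\cap\Q$, a sequence $\bar T^{\alpha,\varepsilon}\in\mathsf{NE}$ in the ground model, and then to take $H:=\set{\bar T^{\alpha,\varepsilon_0}}{\alpha<\omega}$ for any fixed $\varepsilon_0\in(0,1)\cap\Q$ as the countable witness to $\Ce$-goodness. Let $\Por=\bigcup_{\alpha<\omega,\,\varepsilon\in(0,1)\cap\Q} Q_{\alpha,\varepsilon}$ witness the $\sigma$-$\mathsf{FAM}$-linkedness and let $\dot{\bar T}=\seq{\dot T_n}{n<\omega}$ be a $\Por$-name for an element of $\mathsf{NE}$. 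Define in the ground model
\[
T^{\alpha,\varepsilon}_n := \set{s\in 2^{<\omega}}{\exists p\in Q_{\alpha,\varepsilon}\colon p\Vdash s\in\dot T_n}.
\]
Because $\dot T_n$ is forced to be a subtree of $2^{<\omega}$ with $\dot T_n\subseteq \dot T_{n+1}$, each $T^{\alpha,\varepsilon}_n$ is downward-closed and $T^{\alpha,\varepsilon}_n\subseteq T^{\alpha,\varepsilon}_{n+1}$, so the only nontrivial requirement for $\bar T^{\alpha,\varepsilon}:=\seq{T^{\alpha,\varepsilon}_n}{n<\omega}$ to lie in $\mathsf{NE}$ is $\Lb([T^{\alpha,\varepsilon}_n])=0$. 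Granted this, $\Ce$-goodness is immediate: if some $p\in\Por$ forces $x\in\bigcup_n[\dot T_n]$, then by ccc we fix $n$, by density of $\bigcup_\alpha Q_{\alpha,\varepsilon_0}$ we refine to $p'\in Q_{\alpha,\varepsilon_0}$ still forcing $x\in[\dot T_n]$, and then $x\frestr k\in T^{\alpha,\varepsilon_0}_n$ for every $k$, i.e.\ $x\in[T^{\alpha,\varepsilon_0}_n]$.

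The substantive step is the measure estimate $\Lb([T^{\alpha,\varepsilon}_n])=0$, and this is where $\mathsf{FAM}$-linkedness enters. Suppose toward a contradiction that $\delta:=\Lb([T^{\alpha,\varepsilon}_n])>0$; since $m\mapsto |T\cap 2^m|/2^m$ is non-increasing with limit $\Lb([T])$ for any subtree $T\subseteq 2^{<\omega}$, this gives $|T^{\alpha,\varepsilon}_n\cap 2^m|\geq\delta 2^m$ for every $m$. For each nonempty $s\in T^{\alpha,\varepsilon}_n$ pick $p_s\in Q_{\alpha,\varepsilon}$ with $p_s\Vdash s\in\dot T_n$; enumerate the nonempty nodes of $T^{\alpha,\varepsilon}_n$ as $\seq{s_\ell}{\ell<\omega}$ \emph{grouped by length}, put $p_\ell:=p_{s_\ell}$, and set $I_k:=\set{\ell<\omega}{|s_\ell|=k+1}$. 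Then $\seq{I_k}{k<\omega}$ partitions $\omega$ with $|I_k|\geq\delta 2^{k+1}$. Fix any free fam $\Xi$ on $\pts(\omega)$ in the ground model and let $p^*:=\lim \bar p\in\Por$ witness the $(\Xi,I,\varepsilon)$-linkedness of $Q_{\alpha,\varepsilon}$, with associated $\Por$-name $\dot\Xi'$ extending $\Xi$; since $\omega$ is unchanged by forcing, $\dot\Xi'$ still vanishes on every finite subset of $\omega$, so $\dot\Xi'$ is free in the extension. By the definition of $\lim$,
\[
p^*\Vdash \int_\omega D_k\,d\dot\Xi'\geq 1-\varepsilon,\qquad D_k:=\frac{|\set{\ell\in I_k}{p_\ell\in \dot G}|}{|I_k|}.
\]
The elementary bound $\int D_k\,d\dot\Xi'\leq \dot\Xi'(\set{k}{D_k\geq\eta})+\eta$ then yields, for $\eta:=(1-\varepsilon)/2$, $p^*\Vdash \dot\Xi'(\set{k<\omega}{D_k\geq\eta})\geq\eta>0$; by freeness of $\dot\Xi'$ the set in question is infinite.

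For each such $k$, the distinct $\ell\in I_k$ with $p_\ell\in\dot G$ contribute pairwise distinct $s_\ell\in\dot T_n\cap 2^{k+1}$, hence $|\dot T_n\cap 2^{k+1}|/2^{k+1}\geq \eta\delta$ holds for infinitely many $k$; this contradicts $\Lb([\dot T_n])=0$, which is forced by every condition. Therefore $\Lb([T^{\alpha,\varepsilon}_n])=0$, and the plan delivers $\Ce$-goodness via the countable family $H$. The hard part of the argument is this measure-zero estimate: one has to align the level-by-level combinatorics of the ground-model tree $T^{\alpha,\varepsilon}_n$ with a partition $I$ suited to the $\mathsf{FAM}$-limit, and then leverage the freeness of $\dot\Xi'$ (inherited from $\Xi$ via the ``extending'' clause) to promote ``positive $\dot\Xi'$-measure'' into the ``infinitely often'' needed to contradict $\Lb([\dot T_n])=0$.
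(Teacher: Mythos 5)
The paper states this lemma only as a quotation from \cite{CMU} and provides no proof of its own, so there is nothing to compare line by line; your argument is correct and is in substance the expected one: project the name $\dot T_n$ onto each linked piece $Q_{\alpha,\varepsilon}$ to obtain ground-model trees $T^{\alpha,\varepsilon}_n$, and use the $\mathsf{FAM}$-limit together with the freeness of $\dot\Xi'$ (inherited from $\Xi$ on singletons) to show these projections remain null, which is exactly the substantive step. Two cosmetic remarks: $\bigcup_{\alpha,\varepsilon}Q_{\alpha,\varepsilon}$ is only dense in $\Por$ rather than equal to it, but your argument uses nothing beyond density; and one should note that under your contradiction hypothesis $\Lb([T^{\alpha,\varepsilon}_n])=\delta>0$ every level of the tree is nonempty, so the sets $I_k$ really do form a partition of $\omega$ into nonempty finite sets as required by the definition of $(\Xi,I,\varepsilon)$-linkedness.
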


The following results answered our question.

\begin{lemma}[{\cite{BS1992}, see also~\cite{Car4E}}]
Assume $\aleph_1\leq\kappa\leq\lambda=\lambda^{\aleph_0}$ with $\kappa$ regular and assume that $b\in\baire$ satisfies $\sum_{k<\omega}\frac{1}{b(k)}<\infty$. Let $\Bor_{\pi}$ be a FS iteration of random forcing of length $\pi=\lambda\kappa$. Then, in $V^{\Bor_{\pi}}$, 
$\Lc^*\eqT\baire\eqT \Crm_{\Ewf}\eqT\Cbf_{[\lambda]^{<{\aleph_1}}}$ and $\Crm_{\Nwf}^\perp\eqT\Mbf\eqT\kappa$. 
\end{lemma}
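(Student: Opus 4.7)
The plan is to verify three goodness properties of random forcing $\Bor$ and then combine \autoref{lem:c3} with \autoref{lem:c5} applied to the Polish relational systems $\Mbf$ and $\Crm_{\Nwf}^\perp$. First I would compute the size of the continuum in the extension: since $\Bor_\pi$ is ccc with $|\Bor_\pi|\leq \pi\cdot 2^{\aleph_0}$, and $\pi=\lambda\kappa$ with $\lambda=\lambda^{\aleph_0}$ and $\kappa\leq\lambda$, a standard nice-name counting yields $\cfrak=\lambda$ in $V^{\Bor_\pi}$; note also that $\cf(\pi)=\kappa$ since $\kappa$ is regular.

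Next, I would establish that $\Bor$ is good with respect to three Polish relational systems. It is $\Lc^*$-good as a measure algebra (Kamburelis, \autoref{exa:c2}~\ref{exa:c2:4}); it is $\sigma$-uf-linked by \autoref{exa:b3}~\ref{exa:b3:3}, hence $\sigma$-$\Fr$-linked, and therefore $\baire$-good by \autoref{lem:d11}; and it is $\sigma$-$\mathsf{FAM}$-linked by Shelah (\autoref{exa:b2}~\ref{exa:b2:1}), hence $\Ce$-good by \autoref{lem:d6}. Applying \autoref{lem:c3} to the FS iteration with $\theta=\aleph_1$ for each of $\Rrm\in\{\Lc^*,\baire,\Ce\}$, one obtains $\Cbf_{[\pi]^{<\aleph_1}}\leqT \Rrm$ in $V^{\Bor_\pi}$, so $\bfrak(\Rrm)\leq\aleph_1$ and $\dfrak(\Rrm)\geq|\pi|=\lambda$. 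The reverse direction $\Rrm\leqT\Cbf_{[\lambda]^{<\aleph_1}}$ is automatic: $\bfrak(\Rrm)\geq\aleph_1$ holds for every Prs, and $\dfrak(\Rrm)\leq \cfrak=\lambda$ is trivial in each case (with $\dfrak(\Lc^*)=\cof(\Nwf)\leq\cfrak$ because every null set is contained in a Borel null set). This yields $\Lc^*\eqT\baire\eqT\Ce\eqT\Cbf_{[\lambda]^{<\aleph_1}}$. Since $\Ce\eqT \Crm_{\Ewf}$ via the standard correspondence between sequences of subtrees of $2^{<\omega}$ of vanishing measure and $F_\sigma$ null subsets of $\cantor$, we also obtain $\Crm_{\Ewf}\eqT\Cbf_{[\lambda]^{<\aleph_1}}$.

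For the remaining Tukey equivalences $\Crm_{\Nwf}^\perp\eqT\Mbf\eqT\kappa$, I would observe that a FS iteration of non-trivial ccc posets adds Cohen reals at each limit stage, and that each random real added at a stage $\xi$ is $\Crm_{\Nwf}^\perp$-dominating over $V_\xi$, since it avoids every ground-model Borel null set. Hence both $\Mbf$-dominating and $\Crm_{\Nwf}^\perp$-dominating reals appear cofinally often along $\Bor_\pi$. Invoking the second part of \autoref{lem:c5} with iteration length $\pi$ of cofinality $\kappa$, applied to the Prs $\Mbf$ and $\Crm_{\Nwf}^\perp$, gives $\Mbf\eqT\Crm_{\Nwf}^\perp\eqT\pi\eqT\kappa$.

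The hard part will be the bookkeeping: keeping careful track of the distinction between the iteration length $\pi=\lambda\kappa$ and its cofinality $\kappa$ when invoking \autoref{lem:c5}, and verifying that each goodness property of single random forcing genuinely transfers to the full iteration through \autoref{lem:c3} with $\theta=\aleph_1$. All the individual ingredients are standard, but the combinatorial payoff that motivates the citation is precisely what is needed here: together with \autoref{lem:b4}~\ref{lem:b4:1}, the extension satisfies $\bfrak_b^\eq\leq\non(\Ewf)=\aleph_1$ while $\cov(\Nwf)=\kappa$ can be chosen $\geq\aleph_2$, thereby answering the question raised just before the statement.
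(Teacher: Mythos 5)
Your proof is correct and follows essentially the same route as the paper: you establish that random forcing is $\Lc^*$-, $\baire$- and $\Ce$-good (via Kamburelis, $\sigma$-uf-linkedness with \autoref{lem:d11}, and $\sigma$-$\mathsf{FAM}$-linkedness with \autoref{lem:d6}, respectively), apply \autoref{lem:c3} with $\theta=\aleph_1$ together with the easy reverse Tukey connections, and invoke \autoref{lem:c5} for $\Crm_{\Nwf}^\perp\eqT\Mbf\eqT\kappa$ using that random reals are $\Cn^\perp$-dominating. If anything, your write-up is more careful than the paper's, which leaves stray notation ($\lambda_4$, $\lambda_5$) from \autoref{thm:c2} where $\kappa$ and $\lambda$ are meant.
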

\begin{proof}
Since $\Bor$ adds random reals, these are $\Crm_\Nwf$-unbounded reals, which are precisely the $\Cn^\perp$-dominating reals. So by~\autoref{lem:c5} $\Bor_\pi$ forces $\Crm_{\Nwf}^\perp\eqT\Mbf\eqT\lambda_4$ because $\cf(\lambda)=\lambda_4$.

Notice that $\Bor$ is $\sigma$-uf-linked and $\sigma$-$\mathsf{FAM}$-linked (see~\autoref{exa:b3}~\ref{exa:b3:3} and \autoref{exa:b2}~\ref{exa:b2:1}, respectively), so by~\autoref{lem:d11} and \autoref{lem:d6} $\Bor$ is $\baire$-good and $\Ce$-good, respectively.
Thus, $\Bor$ is $\Lc*$-good by~\autoref{exa:c2}~\ref{exa:c2:4}. Hence, by~\autoref{lem:c3}, $\Bor_\pi$ forces $\Cbf_{[\lambda _5]^{<{\aleph_1}}}\leqT\Lc^*$, $\Cbf_{[\lambda _5]^{<{\aleph_1}}}\leqT\baire$, $\Cbf_{[\lambda _5]^{<{\aleph_1}}}\leqT\Crm_{\Ewf}$.

On the other hand, clearly $\Lc^*\leqT\Cbf_{[\lambda _5]^{<{\aleph_1}}}$, $\baire\leqT\Cbf_{[\lambda _5]^{<{\aleph_1}}}$, $\Crm_{\Ewf}\leqT\Cbf_{[\lambda _5]^{<{\aleph_1}}}$ are forced. Consequently, $\Bor_\pi$ forces $\Lc^*\eqT\baire\eqT \Crm_{\Ewf}\eqT\Cbf_{[\lambda]^{<{\aleph_1}}}$.
\end{proof}

\section{Open problems}

We know the consistency $\bfrak_b^\eq>\bfrak$, but the following is not known:

\begin{problem}\label{pro:eo}
Is $\bfrak_b^\eq<\bfrak$ consistently. Dually, Is $\dfrak<\dfrak_b^\eq$ consistent?
\end{problem}

Notice that for $b\in\omega^\omega$, $\balc_{b,1}\leq \non(\Mwf)$ and $\cov(\Mwf)\leq \dalc_{b,1}$. On the other hand, after a FS (finite support) iteration of uncountable cofinality lentgh of ccc non-trivial posets, $\non(\Mwf) \leq \cov(\Mwf)$, which implies by \autoref{dRbup} that $\bfrak \leq \bfrak_b^\eq$ and $\dfrak_b^\eq\leq \dfrak$. Hence, FS iterations cannot solve~\autoref{pro:eo}.

Despite the fact that $\bfrak_b^\eq\leq\non(\Ewf)$ (\autoref{lem:b4}~\ref{lem:b4:1}), we do not know the following:

\begin{problem}
 Is $\bfrak_b^\eq<\non(\Ewf)$ consistent for any (some) $b$?  
\end{problem}

Brendle~\cite{Brppts} (see~also~\cite[Lem.~2.6]{CaraboutE}) proved the consistency of $\non(\Ewf)>\dfrak$, so we ask:

\begin{problem}
Is $\bfrak_b^\eq>\dfrak$ consistent for any (some) $b$?    
\end{problem}

In relation to $\bfrak_b^\eq$ and $\non(\Ewf)$ when $\sum_{k<\omega}\frac{1}{b(k)} = \infty$, we do not know the following:

\begin{problem}\label{pro:e2}
Are each of the following statements consistent with~$\thzfc$?
\begin{enumerate}[label=\rm(\arabic*)]
    \item $\non(\Ewf)<\bfrak_b^\eq$ for any (some) $b$. Dually, $\dfrak_b^\eq<\cov(\Ewf)$ for any (some) $b$.

    \item $\bfrak_b^\eq<\non(\Ewf)$ for any (some) $b$. Dually, $\cov(\Ewf)<\dfrak_b^\eq$ for any (some) $b$.
\end{enumerate} 
\end{problem}

Recently, Yamazoe used uf-limits on intervals (introduced by Mej\'ia~\cite{M24Anatomy}) along FS iterations to construct a~poset to force
\begin{multline*}\label{cicmaxnone}
 \aleph_1<\add(\Nwf)<\cov(\Nwf)<\bfrak<\non(\Ewf)<\non(\Mwf)<\\
 <\cov(\Mwf)<\dfrak<\non(\Nwf)<\cof(\Nwf).
\end{multline*}
The above model can be modified to get the following:
\begin{multline*}
 \aleph_1<\add(\Nwf)<\cov(\Nwf)<\bfrak<\bfrak_b^\eq=\non(\Ewf)<\non(\Mwf)<\\
 <\cov(\Mwf)<\dfrak<\non(\Nwf)<\cof(\Nwf).
\end{multline*}
The point is that $\bfrak_b^\eq\leq\non(\Ewf)$ when $\sum_{k<\omega}\frac{1}{b(k)}<\infty$ and the forcing that increases $\bfrak_b^\eq$ has uniformly $\sigma$-uf-$\lim$-linked (\autoref{lem:b11}). So we ask:

\begin{problem}\label{pro:e1}
\begin{enumerate}[label=\rm(\arabic*)]
    \item\label{pro:e1:1} Is it consistent $\thzfc$ with 
\begin{multline*}
 \aleph_1<\add(\Nwf)<\cov(\Nwf)<\bfrak<\bfrak_b^\eq<\non(\Ewf)<\non(\Mwf)<\\
 <\cov(\Mwf)<\dfrak<\non(\Nwf)<\cof(\Nwf).
\end{multline*}

    \item\label{pro:e1:2} Is it consistent $\thzfc$ with 
\begin{multline*}
 \aleph_1<\add(\Nwf)<\cov(\Nwf)<\bfrak_b^\eq<\bfrak<\non(\Ewf)<\non(\Mwf)<\\
 <\cov(\Mwf)<\dfrak<\non(\Nwf)<\cof(\Nwf).
\end{multline*}
\end{enumerate}
\end{problem}

Notice that FS iterations cannot solve~\ref{pro:e1:2} of~\autoref{pro:e1}. The following could be solved with 
positive answer of~\autoref{pro:e2}:

\begin{problem}
    \begin{enumerate}[label=\rm(\arabic*)]
        \item Is it consistent $\thzfc$ with 
\begin{multline*}
 \aleph_1<\add(\Nwf)<\cov(\Nwf)<\bfrak<\non(\Ewf)<\bfrak_b^\eq<\non(\Mwf)<\\
 <\cov(\Mwf)<\dfrak<\non(\Nwf)<\cof(\Nwf).
\end{multline*} 

    \item Is it consistent $\thzfc$ with 
\begin{multline*}
 \aleph_1<\add(\Nwf)<\cov(\Nwf)<\bfrak_b^\eq<\bfrak<\non(\Ewf)<\non(\Mwf)<\\
 <\cov(\Mwf)<\dfrak<\non(\Nwf)<\cof(\Nwf).
\end{multline*} 
    \end{enumerate}
\end{problem}

\subsection*{Acknowledgments}

This survey has been developed specifically for the proceedings of the RIMS Set Theory Workshop 2024 \textit{Recent Developments in Axiomatic Set Theory}, held at Kyoto University RIMS. The author expresses gratitude to Professor Masahiro Shioya from University of Tsukuba for letting him participate with a talk at the Workshop and submit a paper to this proceedings.

The author would like to thank the Israel Science Foundation for partially supporting this work by grant 2320/23 (2023-2027).

{\small
\bibliography{left}
\bibliographystyle{alpha}
}

\end{document}